\definecolor{LightCyan}{rgb}{0.88,1,1}
\newcommand{\bm}[1]{\boldsymbol{#1}}
\newcommand{\vd}{{\mathbf{d}}}
\newcommand{\ve}{{\mathbf{e}}}
\newcommand{\vg}{{\mathbf{g}}}
\newcommand{\vq}{{\mathbf{q}}}
\newcommand{\vu}{{\mathbf{u}}}
\newcommand{\vv}{{\mathbf{v}}}
\newcommand{\vw}{{\mathbf{w}}}
\newcommand{\vx}{{\mathbf{x}}}
\newcommand{\vy}{{\mathbf{y}}}
\newcommand{\vz}{{\mathbf{z}}}
\newcommand{\vI}{{\mathbf{I}}}
\newcommand{\vW}{{\mathbf{W}}}
\newcommand{\vtheta}{{\bm{\theta}}}
\newcommand{\cN}{{\mathcal{N}}}
\newcommand{\vareps}{\varepsilon}
\newcommand{\EE}{\mathbb{E}} 
\newcommand{\RR}{\mathbb{R}} 
\newcommand{\vzero}{\mathbf{0}} 
\newcommand{\vone}{{\mathbf{1}}} 
\newcommand{\Prob}{{\mathrm{Prob}}} 
\newcommand{\dom}{{\mathrm{dom}}} 
\newcommand{\st}{\mbox{ s.t. }}
\DeclareMathOperator*{\argmin}{arg\,min} 
\DeclareMathOperator*{\Min}{minimize}
\DeclareMathOperator*{\Max}{maximize}
\newcommand{\bc}{\begin{center}}
\newcommand{\ec}{\end{center}}
\newcommand{\bdm}{\begin{displaymath}}
\newcommand{\edm}{\end{displaymath}}
\newcommand{\beq}{\begin{equation}}
\newcommand{\eeq}{\end{equation}}
\newcommand{\bfl}{\begin{flushleft}}
\newcommand{\efl}{\end{flushleft}}
\newcommand{\bt}{\begin{tabbing}}
\newcommand{\et}{\end{tabbing}}
\newcommand{\beqn}{\begin{eqnarray}}
\newcommand{\eeqn}{\end{eqnarray}}
\newcommand{\beqs}{\begin{align*}} 
\newcommand{\eeqs}{\end{align*}}  
\newtheorem{assumption}{Assumption}
\numberwithin{equation}{section}
\begin{document}

\title{Momentum-based variance-reduced proximal stochastic gradient method for composite nonconvex stochastic optimization}

\author{Yangyang Xu \and Yibo Xu}

\institute{Yangyang Xu \at Department of Mathematical Sciences, Rensselaer Polytechnic Institute, Troy, NY 12180\\
\email{xuy21@rpi.edu}\\
Yibo Xu \at School of Mathematical and Statistical Sciences,  Clemson University, Clemson, SC 29634\\
\email{yibox@clemson.edu}
}

\date{\today}

\maketitle

\begin{abstract}
Stochastic gradient methods (SGMs) have been extensively used for solving stochastic problems or large-scale machine learning problems. Recent works employ various techniques to improve the convergence rate of SGMs for both convex and nonconvex cases. Most of them require a large number of samples in some or all iterations of the improved SGMs. In this paper, we propose a new SGM, named PStorm, for solving nonconvex nonsmooth stochastic problems. With a momentum-based variance reduction technique, PStorm can achieve the optimal complexity result $O(\vareps^{-3})$ to produce a stochastic $\vareps$-stationary solution, if a mean-squared smoothness condition holds. Different from existing optimal methods, PStorm can achieve the ${O}(\vareps^{-3})$ result by using only one or $O(1)$ samples in every update. With this property, PStorm can be applied to online learning problems that favor real-time decisions based on one or $O(1)$ new observations. In addition, for large-scale machine learning problems, PStorm can generalize better by small-batch training than other optimal methods that require large-batch training and the vanilla SGM, as we demonstrate on training a sparse fully-connected neural network and a sparse convolutional neural network.

\vspace{0.3cm}

\noindent {\bf Keywords:} stochastic gradient method, variance reduction, momentum, small-batch training.
\vspace{0.3cm}

\noindent {\bf Mathematics Subject Classification:} 90C15, 65K05, 68Q25

\end{abstract}

\section{Introduction}
The stochastic approximation method first appears in \cite{robbins1951stochastic} for solving a root-finding problem. Nowadays, its first-order version, or the stochastic gradient method (SGM), has been extensively used to solve machine learning problems that involve huge amounts of given data and also to stochastic problems that involve uncertain streaming data. Complexity results of SGMs have been well established for convex problems. Many recent research papers on SGMs focus on nonconvex cases.

In this paper, we consider the regularized nonconvex stochastic programming
\begin{equation}\label{eq:stoc-prob}
\Phi^*= \Min_{\vx\in\RR^n} ~\Phi(\vx):= \big\{F(\vx)\equiv\EE_\xi [f(\vx;\xi)]\big\} + r(\vx), 
\end{equation}
where $f(\,\cdot\,; \xi)$ is a smooth nonconvex function almost surely for $\xi$, and $r$ is a closed convex function on $\RR^n$. Examples of \eqref{eq:stoc-prob} include the sparse online matrix factorization \cite{mairal2010online}, the online nonnegative matrix factorization \cite{zhao2016online}, and the streaming PCA (by a unit-ball constraint) \cite{mitliagkas2013memory}. In addition, as $\xi$ follows a uniform distribution on a finite set $\Xi=\{\xi_1,\ldots,\xi_N\}$, \eqref{eq:stoc-prob} recovers the so-called finite-sum structured problem. It includes most regularized machine learning problems such as the sparse bilinear logistic regression \cite{shi2014sparse}, the sparse convolutional neural network \cite{liu2015sparse}, and the group sparse regularized deep neural networks \cite{scardapane2017group}. 

\subsection{Background} When $r\equiv 0$, the recent work \cite{arjevani2019lower} gives an $O(\vareps^{-3})$ lower complexity bound of SGMs to produce a stochastic $\vareps$-stationary solution of \eqref{eq:stoc-prob} (see Definition~\ref{def:eps-sol} below), by assuming the so-called mean-squared smoothness condition (see Assumption~\ref{assump-smooth}). Several variance-reduced SGMs \cite{tran2021hybrid, wang2019spiderboost, fang2018spider, cutkosky2019momentum} have achieved an $O(\vareps^{-3})$ or $\tilde O(\vareps^{-3})$ complexity result\footnote{Throughout the paper, we use $\tilde O$ to suppress an additional polynomial term of $|\log\vareps|$}. Among them, \cite{fang2018spider, cutkosky2019momentum} only consider smooth cases, i.e., $r\equiv 0$ in \eqref{eq:stoc-prob}, and \cite{tran2021hybrid, wang2019spiderboost} study nonsmooth problems in the form of \eqref{eq:stoc-prob}. To reach an $O(\vareps^{-3})$ complexity result, the Hybrid-SGD method in \cite{tran2021hybrid} needs $O(\vareps^{-1})$ samples at the initial step and then at least two samples at each update, while \cite{wang2019spiderboost, fang2018spider} require $O(\vareps^{-2})$ samples after every fixed number of updates. The STORM method in \cite{cutkosky2019momentum} requires one single sample of $\xi$ at each update, but it only applies to smooth problems. Practically on training a (deep) machine learning model, small-batch training is often used to have better generalization \cite{masters2018revisiting, keskar2016large}. In addition, for certain applications such as reinforcement learning \cite{sutton2018reinforcement}, one single sample can usually be obtained, depending on the stochastic environment and the current decision. Furthermore, regularization terms can improve generalization of a machine learning model, even for training a neural network \cite{wei2019regularization}. We aim at designing a new SGM for solving the nonconvex nonsmooth problem \eqref{eq:stoc-prob} and achieving a (near)-optimal\footnote{By ``optimal'', we mean that the complexity result can reach the lower bound result; a result is ``near optimal'', if it has an additional logarithmic term or a polynomial of logarithmic term than the lower bound.} complexity result by using $O(1)$ (that can be \emph{one}) samples at each update.

\subsection{Mirror-prox Algorithm}
Our algorithm is a mirror-prox SGM, and we adopt the momentum technique to reduce variance of the stochastic gradient in order to achieve a (near)-optimal complexity result.

Let $w$ be a continuously differentiable and 1-strongly convex function on $\dom(r)$, i.e., 
$$w(\vy) \ge w(\vx) + \langle \nabla w(\vx), \vy-\vx\rangle + \frac{1}{2}\|\vy-\vx\|^2,\, \forall\, \vx,\vy\in \dom(r).$$
The Bregman divergence induced by $w$ is defined as
\begin{equation}
V(\vx,\vz)=w(\vx)-w(\vz) - \langle \nabla w(\vz), \vx-\vz\rangle.
\end{equation}
At each iteration of our algorithm, we obtain one or a few samples of $\xi$, compute stochastic gradients at the previous and current iterates using the same samples, and then perform a mirror-prox momentum stochastic gradient update. The pseudocode is shown in Algorithm~\ref{alg:prox-storm}. We name it as PStorm as it can be viewed as a proximal version of the Storm method in \cite{cutkosky2019momentum}. Notice that when $\beta_k=1,\forall\, k\ge0$, the algorithm reduces to the non-accelerated stochastic proximal gradient method. However, our analysis does not apply to this case, for which an innovative analysis can be found in \cite{davis2019stochastic}.

\begin{algorithm}[h]
\caption{Momentum-based variance-reduced proximal stochastic gradient method for \eqref{eq:stoc-prob}}\label{alg:prox-storm}
\DontPrintSemicolon
\textbf{Input:} max iteration numer $K$, minibatch size $m$, and positive sequences $\{\beta_k\}\subseteq(0,1)$ and $\{\eta_k\}$.\;
\textbf{Initialization:} choose $\vx^0\in\dom(r)$ and let $\vd^0 = \frac{1}{m_0}\sum_{\xi\in B_0}\nabla f(\vx^0;\xi)$ with $m_0$ i.i.d. samples $B_0=\{\xi_1^0,\ldots,\xi_{m_0}^0\}$\;
\For{$k=0, 1,\ldots, K-1$}{
Update $\vx$ by
\begin{equation}\label{eq:update-x}
\vx^{k+1}=\argmin_{\vx} \left\{\langle \vd^k, \vx\rangle + \frac{1}{\eta_k}V(\vx,\vx^k) + r(\vx)\right\}.\vspace{-0.3cm}
\end{equation}
\;
Obtain $m$ i.i.d. samples $B_{k+1}=\{\xi_1^{k+1},\ldots,\xi_m^{k+1}\}$ and let 
\begin{equation}\label{eq:def-v-u-k}
\textstyle \vv^{k+1} = \frac{1}{m}\sum_{\xi\in B_{k+1}}\nabla f(\vx^{k+1}; \xi),\quad \vu^{k+1} = \frac{1}{m}\sum_{\xi\in B_{k+1}}\nabla f(\vx^{k}; \xi).\vspace{-0.3cm}
\end{equation}\;
Let $\vd^{k+1} = \vv^{k+1} + (1-\beta_{k})(\vd^{k}-\vu^{k+1})$.\;
}
Return $\vx^\tau$ with $\tau$ selected from $\{0,1,\ldots,K-1\}$ uniformly at random or by the distribution
\begin{equation}\label{eq:select-tau}
\textstyle \Prob(\tau = k) = \frac{\frac{\eta_k}{4}(1-\eta_k L)-\frac{\eta_k^2}{5m\eta_{k+1}}(1-\beta_k)^2}{\sum_{j=0}^{K-1}\left(\frac{\eta_j}{4}(1-\eta_j L)-\frac{\eta_j^2}{5m\eta_{j+1}}(1-\beta_j)^2\right)},\, k = 0,1,\ldots,K-1.
\end{equation}
\vspace{-0.2cm}
\end{algorithm}

\subsection{Related Works}
Many efforts have been made on analyzing the convergence and complexity of SGMs for solving nonconvex stochastic problems, e.g., \cite{ghadimi2016accelerated, ghadimi2013stochastic, xu2015block-sg, davis2019stochastic, davis2020stochastic, wang2019spiderboost, cutkosky2019momentum, fang2018spider, allen2018natasha, tran2021hybrid}. We list comparison results on the complexity in Table~\ref{table:comparison}.

\begin{table}
\begin{center}
\resizebox{0.99\textwidth}{!}{
\begin{tabular}{|c|c|c|c|c|}
\hline
\multirow{2}{*}{\textbf{Method}} & \multirow{2}{*}{\textbf{problem}} & \multirow{2}{*}{\textbf{key assumption}} & \textbf{\#samples} & \multirow{2}{*}{\textbf{complexity}}\\
& & &\textbf{at $k$-th iteration} &\\\hline\hline
\multirow{2}{*}{accelerated prox-SGM \cite{ghadimi2016accelerated}} & \multirow{2}{*}{$\min_\vx\{\EE_\xi[f(\vx;\xi)]+r(\vx)\}$} & $\EE_\xi[f(\vx;\xi)]$ is smooth & \multirow{2}{*}{$\Theta(k)$} & \multirow{2}{*}{$O(\vareps^{-4})$}\\
& &  $r$ is convex & & \\\hline
\multirow{3}{*}{stochastic subgradient \cite{davis2019stochastic}} & \multirow{3}{*}{$\min_\vx\{\EE_\xi[f(\vx;\xi)]+r(\vx)\}$} & $\EE_\xi[f(\vx;\xi)]$ is weakly-convex & \multirow{3}{*}{$O(1)$} & \multirow{3}{*}{$O(\vareps^{-4})$}\\
& &  $r$ is convex & & \\
& &  bounded stochastic subgrad. & & \\\hline
\multirow{2}{*}{Spider \cite{fang2018spider}} & \multirow{2}{*}{$\min_\vx\{\EE_\xi[f(\vx;\xi)]\}$} & mean-squared smoothness & $\Theta(\vareps^{-2})$  & \multirow{2}{*}{$O(\vareps^{-3})$} \\
& &  see Assumption~\ref{assump-smooth} & or $\Theta(\vareps^{-1})$ & \\\hline
\multirow{2}{*}{Storm \cite{cutkosky2019momentum} } & \multirow{2}{*}{$\min_\vx\{\EE_\xi[f(\vx;\xi)]\}$ } & $f(\,\cdot\,;\xi)$ is smooth a.s. & \multirow{2}{*}{1} & \multirow{2}{*}{$\tilde O(\vareps^{-3})$} \\
& &  bounded stochastic grad.$^*$ & & \\\hline
\multirow{2}{*}{Spiderboost \cite{wang2019spiderboost} }& \multirow{2}{*}{$\min_\vx\{\EE_\xi[f(\vx;\xi)]+r(\vx)\}$ }& mean-squared smoothness & $\Theta(\vareps^{-2})$ & \multirow{2}{*}{ $O(\vareps^{-3})$}\\
& & $r$ is convex & or $\Theta(\vareps^{-1})$  & \\\hline
\multirow{2}{*}{Hybrid-SGD \cite{tran2021hybrid} } & \multirow{2}{*}{$\min_\vx\{\EE_\xi[f(\vx;\xi)]+r(\vx)\}$} & mean-squared smoothness  & $\Theta(\vareps^{-1})$ if $k=0$ & \multirow{2}{*}{$O(\vareps^{-3})$}\\
& & $r$ is convex & $O(1)$ but at least 2 if $k>0$ & \\\hline
\multirow{4}{*}{PStorm (\textbf{This paper})} & \multirow{4}{*}{$\min_\vx\{\EE_\xi[f(\vx;\xi)]+r(\vx)\}$} &   & $O(1)$ and can be 1  & \multirow{2}{*}{$\tilde O(\vareps^{-3})$}\\
& & mean-squared smoothness & varying stepsize & \\\cline{4-5}
& & $r$ is convex & $O(1)$ and can be 1 & \multirow{2}{*}{$O(\vareps^{-3})$} \\
& & & constant stepsize & \\\hline
\end{tabular}
}
\end{center}
\caption{Comparison of the complexity results of several methods in the literature to our method to produce a stochastic $\vareps$-stationary solution of a nonconvex stochastic optimization problem. To obtain the listed results, all the compared methods assume unbiasedness and variance boundedness of the stochastic (sub)gradients. The results only show the dependence on $\vareps$. All other constants (e.g., the smoothness constant $L$ and the initial objective error) are hidden in the big-$O$. More complete results of the proposed method are given in Remarks~\ref{rm:dep-sigma-dynamic} and \ref{rm:dep-sigma-const}.
}
$^*$: the boundedness assumption on stochastic gradient made by Storm \cite{cutkosky2019momentum} can be lifted if a bound $\sigma$ on the variance of the stochastic gradient is known. 
\label{table:comparison}
\end{table}

The work \cite{ghadimi2013stochastic} appears to be the first one that conducts complexity analysis of SGM for nonconvex stochastic problems. It introduces a randomized SGM. For a smooth nonconvex problem, the randomized SGM can produce a stochastic $\vareps$-stationary solution within $O(\vareps^{-4})$ SG iterations. The same-order complexity result is then extended in \cite{ghadimi2016accelerated} to nonsmooth nonconvex stochastic problems in the form of \eqref{eq:stoc-prob}. To achieve an $O(\vareps^{-4})$ complexity result, the accelerated prox-SGM in \cite{ghadimi2016accelerated} needs to take $\Theta(k)$ samples at the $k$-th update for each $k$. Assuming a weak-convexity condition and using the tool of Moreau envelope, \cite{davis2019stochastic} establishes an $O(\vareps^{-4})$ complexity result of stochastic subgradient method for solving more general nonsmooth nonconvex problems to produce a near-$\vareps$ stochastic stationary solution (see \cite{davis2019stochastic} for the precise definition).  

In general, the $O(\vareps^{-4})$ complexity result cannot be improved for smooth nonconvex stochastic problems, as \cite{arjevani2019lower} shows that for the problem $\min_\vx F(\vx)$ where $F$ is smooth, any SGM that can access unbiased SG with bounded variance needs $\Omega(\vareps^{-4})$ SGs to produce a solution $\bar\vx$ such that $\EE\big[\|\nabla F(\bar\vx)\|\big] \le \vareps$. However, with one additional mean-squared smoothness condition on each unbiased SG, the complexity can be reduced to $O(\vareps^{-3})$, which has been reached by a few variance-reduced SGMs \cite{tran2021hybrid, wang2019spiderboost, fang2018spider, cutkosky2019momentum, pham2020proxsarah}. These methods are closely related to ours. Below we briefly review them. 

\noindent\textbf{Spider.} To find a stochastic $\vareps$-stationary solution of \eqref{eq:stoc-prob} with $r\equiv0$,  \cite{fang2018spider} proposes the Spider method with the update: 
$\vx^{k+1}=\vx^k - \eta_k \vv^k$ for each $k\ge0$.
Here, $\vv^k$ is set to
\begin{equation}\label{eq:vk-spider}
\vv^k = \left\{
\begin{array}{ll}
\frac{1}{|B_k|}\sum_{\xi\in B_k}\left(\nabla f(\vx^k;\xi)-\nabla f(\vx^{k-1};\xi)\right) + \vv^{k-1}, & \text{ if } \mathrm{mod}(k, q)\neq 0, \\[0.2cm]
\frac{1}{|C_k|}\sum_{\xi\in C_k}\nabla f(\vx^k;\xi), & \text{ otherwise},
\end{array}
\right.
\end{equation}
where $|B_k|=\Theta(\frac{1}{q\vareps^{2}})$, $|C_k|=\Theta(\vareps^{-2})$, and $q=\Theta(\vareps^{-1})$ or $q=\Theta(\vareps^{-2})$. Under the mean-squared smoothness condition (see Assumption~\ref{assump-smooth}),  the Spider method can produce a stochastic $\vareps$-stationary solution with $O(\vareps^{-3})$ sample gradients, by choosing appropriate learning rate $\eta_k$ (roughly in the order of $\frac{1}{q\|\vv^k\|}$).

\noindent\textbf{Storm.} \cite{cutkosky2019momentum} focuses on a smooth nonconvex stochastic problem, i.e., \eqref{eq:stoc-prob} with $r\equiv0$. It proposes the Storm method, which can be viewed as a special case of Algorithm~\ref{alg:prox-storm} with $m_0=m=1$ applied to the smooth problem. However, its analysis and also algorithm design rely on the knowledge of a uniform bound on $\{\|\nabla f(\vx;\xi)\|\}$ or on the bound of the variance of the stochastic gradient. In addition, because the learning rate of Storm is set dependent on the sampled stochastic gradient, its analysis needs almost-sure uniform smoothness of $f(\vx;\xi)$. This assumption is significantly stronger than the mean-squared smoothness condition, and also the uniform smoothness constant can be much larger than an averaged one. 

\noindent\textbf{Spiderboost.} \cite{wang2019spiderboost} extends Spider into solving a nonsmooth nonconvex stochastic problem in the form of \eqref{eq:stoc-prob} by proposing a so-called Spiderboost method. Spiderboost iteratively performs the update
\begin{equation}\label{eq:spiderboost}
\textstyle \vx^{k+1}=\argmin_\vx  \langle \vv^k, \vx\rangle + \frac{1}{\eta}V(\vx,\vx^k) + r(\vx),
\end{equation}
where $V$ denotes the Bregman divergence induced by a strongly-convex function, and $\vv^k$ is set by \eqref{eq:vk-spider} with $q=|B_k| = \Theta(\vareps^{-1})$ and $|C_k|=\Theta(\vareps^{-2})$. Under the mean-squared smoothness condition, Spiderboost reaches a complexity result of $O(\vareps^{-3})$ by choosing $\eta=\frac{1}{2L}$, where $L$ is the smoothness constant.

\noindent\textbf{Hybrid-SGD.} \cite{tran2021hybrid} considers a nonsmooth nonconvex stochastic problem in the form of \eqref{eq:stoc-prob}. It proposes a proximal stochastic method, called Hybrid-SGD, as a hybrid of SARAH \cite{nguyen2017sarah} and an unbiased SGD. The Hybrid-SGD performs the update $\vx^{k+1} = (1-\gamma_k)\vx^k + \gamma_k \hat\vx^{k+1}$ for each $k\ge 0$, where
$$\textstyle \hat\vx^{k+1}=\argmin_\vx \langle \vv^k, \vx\rangle + \frac{1}{2\eta_k}\|\vx-\vx^k\|^2 + r(\vx).$$
Here, the sequence $\{\vv^k\}$ is set by $\vv^0=\frac{1}{|B_0|}\sum_{\xi\in B_0}\nabla f(\vx^0;\xi)$ with $|B_0| = \Theta(\vareps^{-1})$ for a given $\vareps>0$ and
\begin{equation}\label{eq:hyb-sgd-v}
\textstyle \vv^k = \beta_{k-1}\vv^{k-1}+\beta_{k-1}\big(\nabla f(\vx^k;\xi_k)-\nabla f(\vx^{k-1};\xi_k)\big) + (1-\beta_{k-1})\nabla f(\vx^k;\zeta_k),
\end{equation}
where $\xi_k$ and $\zeta_k$ are two independent samples of $\xi$. A mini-batch version of Hybrid-SGD is also given in \cite{tran2021hybrid}. By choosing appropriate constant parameters $\{(\beta_k,\gamma_k,\eta_k)\}$, Hybrid-SGD can reach an $O(\vareps^{-3})$ complexity result. Although the update of $\vv^k$ requires only two or $O(1)$ samples, its initial setting needs $O(\vareps^{-1})$ samples. As explained in \cite[Remark 3]{tran2021hybrid}, if the initial minibatch size is $|B_0|=O(1)$, then the complexity result of Hybrid-SGD will be worsened to $O(\vareps^{-4})$. It is possible to reduce the $O(\vareps^{-4})$ complexity by using an adaptive $\beta_k$ as mentioned in \cite[Remark 3]{tran2021hybrid} to adopt the technique in \cite{tran2020hybrid-minimax}. This way, a near-optimal $\tilde O(\vareps^{-3})$ result may be shown for Hybrid-SGD without a large initial minibatch. Notice that with $\xi_k=\zeta_k, \forall\, k$, the stochastic gradient estimator by Hybrid-SGD will reduce to that by Storm, and further with $\gamma_k=1,\, \forall\, k$, the update of Hybrid-SGD will recover ours. However, the analysis in \cite{tran2020hybrid-minimax, tran2021hybrid} relies on the independence of $\xi_k$ and $\zeta_k$ and the condition $\gamma_k\in (0,1)$, and thus it does not apply to our algorithm.

\noindent\textbf{More.} There are many other works analyzing complexity results of SGMs on solving nonconvex finite-sum structured problems, e.g., \cite{allen2016variance, reddi2016stochastic, lei2017non, huo2016asynchronous}. These results often emphasize the dependence on the number of component functions and also the target error tolerance $\vareps$. In addition, several works have analyzed adaptive SGMs for nonconvex finite-sum or stochastic problems, e.g., \cite{chen2018convergence, zhou2018convergence, xu2020-APAM}. Moreover, along the direction of accelerating SGMs, some works (e.g., \cite{zhang2019stochastic, tran2020hybrid-minimax, xu2021katyusha, zhang2021stochastic}) have considered minimax structured or compositional optimization problems. An exhaustive review of all these works is impossible and also beyond the scope of this paper. We refer interested readers to those papers and the references therein.

\subsection{Contributions} Our main contributions are about the algorithm design and analysis. 
\begin{itemize}
\item We design a momentum-based variance-reduced mirror-prox stochastic gradient method for solving nonconvex nonsmooth stochastic problems. The proposed method generalizes Storm in \cite{cutkosky2019momentum} from smooth cases to nonsmooth cases. In addition, with one single data sample per iteration, it achieves, by taking varying stepsizes, the same near-optimal complexity result $\tilde O(\vareps^{-3})$ under a mean-squared smooth condition, which is weaker than the almost-sure uniform smoothness condition assumed in \cite{cutkosky2019momentum}. 

\item When constant stepsizes are adopted, the proposed method can achieve the optimal $O(\vareps^{-3})$ complexity result, by using one single or $O(1)$ data samples per iteration. While Spiderboost \cite{wang2019spiderboost} can also achieve the optimal $O(\vareps^{-3})$ complexity result for stochastic nonconvex nonsmooth problems, it needs $\Theta(\vareps^{-2})$ data samples every $\Theta(\vareps^{-1})$ iterations and $\Theta(\vareps^{-1})$ samples for every other iteration. To achieve the optimal $O(\vareps^{-3})$ complexity result, Hybrid-SGD \cite{tran2021hybrid} needs $\Theta(\vareps^{-1})$ data samples for the first iteration and at least two samples for all other iterations.  However, if only $O(1)$ samples can be obtained initially, the worst-case complexity result of Hybrid-SGD with constant stepsize will increase to $O(\vareps^{-4})$. Our proposed method is the first one that uses only one or $O(1)$ samples per iteration and can still reach the optimal complexity result, and thus it can be applied to online learning problems that need real-time decision based on possibly one or several new data samples.  

\item Furthermore, the proposed method only needs an estimate of the smoothness parameter and is easy to tune to have good performance. Empirically, we observe that it converges faster than a vanilla SGD and can give higher testing accuracy than Spiderboost and Hybrid-SGD  on training sparse neural networks. 

\end{itemize}

\subsection{Notation, Definitions, and Outline}
We use bold lowercase letters $\vx,\vy,\vg,\ldots$ for vectors. $\EE_{B_{k}}$ denotes the expectation about a mini-batch set $B_{k}$ conditionally on the all previous history, and $\EE$ denotes the full expectation. $|B_k|$ counts the number of elements in the set $B_k$. We use $\|\cdot\|$ for the Euclidean norm. A differentiable function $F$ is called $L$-smooth, if $\|\nabla F(\vx)-\nabla F(\vy)\|\le L\|\vx-\vy\|$ for all $\vx$ and $\vy$.

\begin{definition}[proximal gradient mapping]\label{def:prox-map}
Given $\vd$, $\vx\in\dom(r)$, and $\eta>0$, we define $P(\vx,\vd,\eta)=\frac{1}{\eta}(\vx-\vx^+)$, where 
$\vx^+ =\argmin_\vy \textstyle \left\{\langle \vd, \vy\rangle  +\frac{1}{\eta}V(\vy,\vx) + r(\vy)\right\}.$
\end{definition}

By the proximal gradient mapping, if a point $\bar\vx\in\dom(r)$ is an optimal solution of \eqref{eq:stoc-prob}, then it must satisfy $P(\bar\vx,\nabla F(\bar\vx),\eta)=\vzero$ for any $\eta>0$. Based on this observation, we define a near-stationary solution as follows. This definition is standard and has been adopted in other papers, e.g., \cite{wang2019spiderboost}.

\begin{definition}[stochastic $\vareps$-stationary solution]\label{def:eps-sol}
Given $\vareps>0$, a random vector $\vx\in \dom(r)$ is called a stochastic $\vareps$-stationary solution of \eqref{eq:stoc-prob} if for some $\eta>0$, it holds $\EE[\|P(\vx,\nabla F(\vx),\eta)\|^2]\le \vareps^2$.
\end{definition}

From \cite[Lemma 1]{ghadimi2016mini}, it holds
\begin{equation}\label{eq:lem1-lan}
\big\langle \vd, P(\vx,\vd,\eta)\big\rangle \ge \|P(\vx,\vd,\eta)\|^2 + \frac{1}{\eta}\big(r(\vx^+)-r(\vx)\big).
\end{equation}
In addition, the proximal gradient mapping is nonexpansive from \cite[Proposition 1]{ghadimi2016mini}, i.e.,
\begin{equation}\label{eq:nonexp-P}
\|P(\vx, \vd_1, \eta) - P(\vx, \vd_2, \eta)\| \le \|\vd_1-\vd_2\|, \ \forall\, \vd_1,\vd_2, \, \forall\, \vx\in \dom(r), \, \forall\, \eta > 0.
\end{equation}
For each $k\ge0$, we denote
\begin{equation}\label{eq:vg-vgbar}
\vg^k = P(\vx^k, \vd^k, \eta_k),\quad \bar\vg^k = P(\vx^k, \nabla F(\vx^k), \eta_k).
\end{equation}
Notice that $\|\bar\vg^k\|$ measures the violation of stationarity of $\vx^k$. The gradient error is represented by
\begin{equation}\label{eq:error-grad}
\ve^k = \vd^k - \nabla F(\vx^k).
\end{equation}

\noindent\textbf{Outline.} The rest of the paper is organized as follows. In section~\ref{sec:analysis}, we establish complexity results of Algorithm~\ref{alg:prox-storm}. Numerical experiments are conducted in section~\ref{sec:numerical}, and we conclude the paper in section~\ref{sec:conclusion}.

\section{Convergence Analysis}\label{sec:analysis}
In this section, we analyze the complexity result of Algorithm \ref{alg:prox-storm}. Part of our analysis is inspired from that in \cite{cutkosky2019momentum} and \cite{wang2019spiderboost}. In addition, we give a novel analysis that enables us to obtain the optimal $O(\vareps^{-3})$ complexity result by using $O(1)$ samples every iteration. 
Throughout our analysis, we make the following assumptions.

\begin{assumption}[finite optimal objective]\label{assump-obj}
The optimal objective value $\Phi^*$ of \eqref{eq:stoc-prob} is finite.
\end{assumption}

\begin{assumption}[mean-squared smoothness]\label{assump-smooth}
The function $f(\,\cdot\,;\xi)$ satisfies the mean-squared smoothness condition:  
$\EE_\xi \big[\|\nabla f(\vx;\xi) - \nabla f(\vy;\xi)\|^2\big] \le L^2 \|\vx-\vy\|^2,\, \forall\, \vx,\vy\in \dom(r).$
\end{assumption}

\begin{assumption}[unbiasedness and variance boundedness]\label{assump-sgd}
There is $\sigma>0$ such that 
\begin{align}
\EE_\xi [\nabla f(\vx; \xi)] = \nabla F(\vx),\quad \EE[\|\nabla f(\vx; \xi) - \nabla F(\vx)\|^2] \le \sigma^2, \ \forall\, \vx\in \dom(r).
\end{align}
\end{assumption}

It is easy to show that under Assumptions~\ref{assump-smooth} and \ref{assump-sgd}, the function $F(\vx)=\EE_\xi [f(\vx;\xi)]$ is $L$-smooth; see the arguments at the end of section~2.2 of \cite{tran2021hybrid}. 
We first show a few lemmas. The lemma below estimates one-iteration progress. Its proof follows from \cite{wang2019spiderboost}.

\begin{lemma}[one-iteration progress]\label{lem:obj-diff}
Let $\{\vx^k\}$ be generated from Algorithm~\ref{alg:prox-storm}. If $F$ is $L$-smooth, then
$$\Phi(\vx^{k+1})-\Phi(\vx^k) \le \frac{\eta_k}{2}(2-\eta_k L)\|\ve^k\|^2 - \frac{\eta_k}{4}(1-\eta_k L)\|\bar\vg^k\|^2,\, \forall\, k\ge 1,$$
where $\bar\vg^k$ is defined in \eqref{eq:vg-vgbar}.
\end{lemma}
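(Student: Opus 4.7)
The plan is to combine four standard ingredients: the $L$-smoothness descent inequality for $F$, the characterization of the update via the proximal gradient mapping, the optimality inequality \eqref{eq:lem1-lan} to absorb $r$ into the objective, and the nonexpansiveness property \eqref{eq:nonexp-P} to swap the stochastic mapping $\vg^k$ for the deterministic $\bar\vg^k$ on the right-hand side.

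First I would rewrite the update \eqref{eq:update-x} in terms of the proximal gradient mapping, noting that by Definition~\ref{def:prox-map} we have $\vx^{k+1}=\vx^k-\eta_k\vg^k$ with $\vg^k=P(\vx^k,\vd^k,\eta_k)$. Plugging this into the $L$-smooth descent inequality gives
\[
F(\vx^{k+1})\le F(\vx^k)-\eta_k\langle\nabla F(\vx^k),\vg^k\rangle+\tfrac{\eta_k^2L}{2}\|\vg^k\|^2.
\]
I would then split $\nabla F(\vx^k)=\vd^k-\ve^k$ and apply \eqref{eq:lem1-lan} to get $\langle\vd^k,\vg^k\rangle\ge\|\vg^k\|^2+\tfrac{1}{\eta_k}(r(\vx^{k+1})-r(\vx^k))$. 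Moving the $r$-difference to the left-hand side converts $F$ into $\Phi$, yielding
\[
\Phi(\vx^{k+1})-\Phi(\vx^k)\le -\eta_k\bigl(1-\tfrac{\eta_kL}{2}\bigr)\|\vg^k\|^2+\eta_k\langle\ve^k,\vg^k\rangle.
\]

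Next I would apply Young's inequality with unit weight to the cross term, $\eta_k\langle\ve^k,\vg^k\rangle\le\tfrac{\eta_k}{2}\|\ve^k\|^2+\tfrac{\eta_k}{2}\|\vg^k\|^2$, which collapses the coefficient of $-\|\vg^k\|^2$ to $\tfrac{\eta_k}{2}(1-\eta_kL)$. To pass from $\vg^k$ to $\bar\vg^k$, I would invoke \eqref{eq:nonexp-P}, which yields $\|\vg^k-\bar\vg^k\|\le\|\ve^k\|$ and hence the elementary bound $\|\vg^k\|^2\ge\tfrac{1}{2}\|\bar\vg^k\|^2-\|\ve^k\|^2$; multiplying by $-\tfrac{\eta_k}{2}(1-\eta_kL)$ (using $\eta_kL\le 1$ so the coefficient is non-positive) and combining the two $\|\ve^k\|^2$ contributions gives exactly $\tfrac{\eta_k}{2}(2-\eta_kL)\|\ve^k\|^2-\tfrac{\eta_k}{4}(1-\eta_kL)\|\bar\vg^k\|^2$, matching the stated bound.

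There is no serious obstacle here: the only subtlety is bookkeeping the constants so that (i) Young's inequality is applied with the right weight (weight~$1$ is what makes the residual coefficient $\tfrac{1}{2}(1-\eta_kL)$, aligning with the target $\tfrac{1}{4}(1-\eta_kL)$ after the factor of $\tfrac{1}{2}$ loss in $\|\vg^k\|^2\ge\tfrac{1}{2}\|\bar\vg^k\|^2-\|\ve^k\|^2$), and (ii) the nonexpansiveness is applied to the same stepsize $\eta_k$ so the comparison between $\vg^k$ and $\bar\vg^k$ is legitimate. The step that is easiest to get wrong is the implicit sign requirement $\eta_kL\le 1$ when using the lower bound on $\|\vg^k\|^2$; I would make this assumption explicit (or argue that otherwise the claimed inequality is trivially true because the $\|\bar\vg^k\|^2$ term becomes non-negative on the right).
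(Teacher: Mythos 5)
Your proposal is correct and follows essentially the same route as the paper's own proof: the $L$-smooth descent inequality combined with $\vx^{k+1}=\vx^k-\eta_k\vg^k$, the splitting $\nabla F(\vx^k)=\vd^k-\ve^k$ together with \eqref{eq:lem1-lan} to absorb $r$, Young's inequality with unit weight on the cross term, and the nonexpansiveness bound $\|\vg^k-\bar\vg^k\|\le\|\ve^k\|$ to pass to $\bar\vg^k$ with the same constants. Your explicit flagging of the sign condition $\eta_k L\le 1$ needed in the last step is a point the paper leaves implicit (it is guaranteed by the stepsize choices in all subsequent theorems), so making it explicit is a reasonable refinement rather than a deviation.
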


\begin{proof}
By the $L$-smoothness of $F$ and the definition of $\vg^k$ in \eqref{eq:vg-vgbar}, we have
\begin{equation}\label{eq:lip-F-ineq}
F(\vx^{k+1})-F(\vx^k)\le \langle \nabla F(\vx^k), \vx^{k+1}-\vx^k\rangle + \frac{L}{2}\|\vx^{k+1}-\vx^k\|^2 = -\eta_k\langle \nabla F(\vx^k), \vg^k\rangle + \frac{\eta_k^2L}{2}\|\vg^k\|^2.
\end{equation}
Using the definition of $\ve^k$ in \eqref{eq:error-grad} and the inequality in \eqref{eq:lem1-lan}, we have 
$$-\langle \nabla F(\vx^k), \vg^k\rangle = \langle \ve^k, \vg^k\rangle - \langle \vd^k, \vg^k\rangle \le \langle \ve^k, \vg^k\rangle - \|\vg^k\|^2 + \frac{1}{\eta_k}\big(r(\vx^k)-r(\vx^{k+1})\big).$$
Plugging the above inequality into \eqref{eq:lip-F-ineq} and rearranging terms give
$$\Phi(\vx^{k+1})-\Phi(\vx^k) \le \eta_k\langle \ve^k, \vg^k\rangle - \eta_k \|\vg^k\|^2+ \frac{\eta_k^2L}{2}\|\vg^k\|^2.$$
By the Cauchy-Schwartz inequality, it holds $\eta_k\langle \ve^k, \vg^k\rangle \le \frac{\eta_k}{2}\|\ve^k\|^2 + \frac{\eta_k}{2}\|\vg^k\|^2$, which together with the above inequality implies
\begin{equation}\label{eq:chg-obj-0}
\Phi(\vx^{k+1})-\Phi(\vx^k) \le \frac{\eta_k}{2}\|\ve^k\|^2 - \frac{\eta_k}{2}(1-\eta_k L)\|\vg^k\|^2.
\end{equation}
From \eqref{eq:nonexp-P} and the definitions of $\vg^k$ and $\bar\vg^k$ in \eqref{eq:vg-vgbar}, it follows
\begin{equation}\label{eq:tri-g}
-\|\vg^k\|^2 \le -\frac{1}{2}\|\bar\vg^k\|^2 + \|\vg^k-\bar\vg^k\|^2\le -\frac{1}{2}\|\bar\vg^k\|^2 + \|\vd^k -\nabla F(\vx^k)\|^2 = -\frac{1}{2}\|\bar\vg^k\|^2 + \|\ve^k\|^2.
\end{equation}
Now plug the above inequality into \eqref{eq:chg-obj-0} to give the desired result.
\end{proof}

The next lemma gives a recursive bound on the gradient error vector sequence $\{\ve^k\}$. Its proof follows that of \cite[Lemma 2]{cutkosky2019momentum}.
\begin{lemma}[recursive bound on gradient error]\label{lem:bd-ek}
Under Assumptions~\ref{assump-smooth} and \ref{assump-sgd}, it holds 
$$\EE\big[\|\ve^{k+1}\|^2\big] \le \frac{2\beta_k^2\sigma^2}{m} + \frac{4(1-\beta_k)^2\eta_{k}^2 L^2}{m} \EE\big[\|\bar\vg^{k}\|^2\big] + (1-\beta_k)^2\left(\textstyle 1+\frac{4\eta_{k}^2 L^2}{m}\right)\EE\big[\|\ve^{k}\|^2\big], \forall\, k\ge0,$$
where $\bar\vg^k$ and $\ve^k$ are defined in \eqref{eq:vg-vgbar} and \eqref{eq:error-grad}.
\end{lemma}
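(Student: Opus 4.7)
The plan is to write $\ve^{k+1}$ as a conditionally deterministic part plus a mean-zero fluctuation, and then bound the conditional second moment of that fluctuation using the mean-squared smoothness (Assumption~\ref{assump-smooth}) together with the variance bound (Assumption~\ref{assump-sgd}).

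First I would substitute $\vd^{k+1} = \vv^{k+1} + (1-\beta_k)(\vd^k - \vu^{k+1})$ into $\ve^{k+1} = \vd^{k+1} - \nabla F(\vx^{k+1})$ and insert $\pm(1-\beta_k)\nabla F(\vx^k)$ to obtain the identity
$$\ve^{k+1} = (1-\beta_k)\ve^k + Z^{k+1}, \quad Z^{k+1} := \bigl(\vv^{k+1} - \nabla F(\vx^{k+1})\bigr) - (1-\beta_k)\bigl(\vu^{k+1} - \nabla F(\vx^k)\bigr).$$
Because $B_{k+1}$ is drawn i.i.d.\ conditionally on the past and $\EE_\xi[\nabla f(\cdot;\xi)] = \nabla F(\cdot)$, Assumption~\ref{assump-sgd} gives $\EE_{B_{k+1}}[Z^{k+1}] = \vzero$, so the cross term in expanding $\|\ve^{k+1}\|^2$ vanishes and
$$\EE_{B_{k+1}}\bigl[\|\ve^{k+1}\|^2\bigr] = (1-\beta_k)^2 \|\ve^k\|^2 + \EE_{B_{k+1}}\bigl[\|Z^{k+1}\|^2\bigr].$$

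Next I would regroup $Z^{k+1} = \beta_k\bigl(\vv^{k+1} - \nabla F(\vx^{k+1})\bigr) + (1-\beta_k)\bigl[(\vv^{k+1} - \vu^{k+1}) - (\nabla F(\vx^{k+1}) - \nabla F(\vx^k))\bigr]$ and apply $\|a+b\|^2 \le 2\|a\|^2 + 2\|b\|^2$; this sidesteps the correlation between $\vv^{k+1}$ and $\vu^{k+1}$ induced by the shared samples. Each resulting piece is a sample mean over $B_{k+1}$ of $m$ i.i.d.\ centered summands, so the $1/m^2$ prefactor collapses the sum of squares down to a single-sample second moment. The first piece contributes at most $\sigma^2/m$ by Assumption~\ref{assump-sgd}. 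For the second, the summand $\nabla f(\vx^{k+1};\xi) - \nabla f(\vx^k;\xi) - (\nabla F(\vx^{k+1}) - \nabla F(\vx^k))$ has second moment at most $\EE_\xi\|\nabla f(\vx^{k+1};\xi) - \nabla f(\vx^k;\xi)\|^2 \le L^2\|\vx^{k+1}-\vx^k\|^2$ (dropping the centering only decreases the variance; this is where Assumption~\ref{assump-smooth} enters), and since $\vx^{k+1} - \vx^k = -\eta_k \vg^k$ this yields $L^2\eta_k^2\|\vg^k\|^2/m$.

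Finally I would convert $\|\vg^k\|^2$ into the stated $\|\bar\vg^k\|^2$ via the nonexpansiveness \eqref{eq:nonexp-P}, which gives $\|\vg^k - \bar\vg^k\| \le \|\ve^k\|$ and hence $\|\vg^k\|^2 \le 2\|\bar\vg^k\|^2 + 2\|\ve^k\|^2$. Collecting the three contributions and taking total expectation produces exactly the claimed recursion. The only non-routine point is identifying the right decomposition $\ve^{k+1} = (1-\beta_k)\ve^k + Z^{k+1}$; everything downstream is standard variance algebra. The conceptual subtlety worth flagging is that the shared-sample structure of $\vv^{k+1}$ and $\vu^{k+1}$ is essential: it is what makes the second piece of $Z^{k+1}$ concentrate at a rate proportional to $\|\vx^{k+1}-\vx^k\|^2$ rather than $\sigma^2$, which is precisely the variance-reduction mechanism that later drives the near-optimal $\tilde O(\vareps^{-3})$ complexity.
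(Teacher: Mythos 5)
Your proposal is correct and follows essentially the same route as the paper's proof: the same decomposition $\ve^{k+1}=(1-\beta_k)\ve^k+Z^{k+1}$ with the conditional mean-zero cross term, the same regrouping of $Z^{k+1}$ before applying Young's inequality, the same i.i.d.\ variance collapse to $\frac{L^2}{m}\|\vx^{k+1}-\vx^k\|^2$ via mean-squared smoothness, and the same final conversion $\|\vg^k\|^2\le 2\|\bar\vg^k\|^2+2\|\ve^k\|^2$ using \eqref{eq:nonexp-P}. All the coefficients work out to the stated bound.
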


\begin{proof}
First, notice $\EE_{B_{k+1}}[\langle \vv^{k+1}, \ve^{k}\rangle] = \langle\nabla F(\vx^{k+1}), \ve^{k}\rangle$ and $\EE_{B_{k+1}}[\langle \vu^{k+1}, \ve^{k}\rangle] = \langle\nabla F(\vx^{k}), \ve^{k}\rangle$, and thus
\begin{equation}\label{eq:crs-term0}
\EE_{B_{k+1}}[\langle \vv^{k+1} - \nabla F(\vx^{k+1}), \ve^{k}\rangle] = 0,\quad \EE_{B_{k+1}}[\langle \vu^{k+1} - \nabla F(\vx^{k}), \ve^{k}\rangle] = 0.
\end{equation}
Hence, by writing $\ve^{k+1} = \vv^{k+1} - \nabla F(\vx^{k+1}) + (1-\beta_k)(\nabla F(\vx^{k})- \vu^{k+1})+(1-\beta_k)\ve^{k}$, we have
\begin{equation}\label{eq:one-step-e}
\EE_{B_{k+1}}\big[\|\ve^{k+1}\|^2\big] = \EE_{B_{k+1}}\big[\|\vv^{k+1} - \nabla F(\vx^{k+1}) + (1-\beta_k)(\nabla F(\vx^{k})- \vu^{k+1})\|^2\big] + (1-\beta_k)^2\|\ve^{k}\|^2.
\end{equation}
By the Young's inequality, it holds
\begin{align}\label{eq:step2}
&~\|\vv^{k+1} - \nabla F(\vx^{k+1}) + (1-\beta_k)(\nabla F(\vx^{k})- \vu^{k+1})\|^2 \cr
= &~ \big\|\beta_k\big(\vv^{k+1} - \nabla F(\vx^{k+1})\big) + (1-\beta_k)\big(\vv^{k+1} - \nabla F(\vx^{k+1}) + \nabla F(\vx^{k})- \vu^{k+1}\big)\big\|^2\cr
\le &~ 2\beta_k^2\|\vv^{k+1} - \nabla F(\vx^{k+1})\|^2 + 2(1-\beta_k)^2\|\vv^{k+1} - \nabla F(\vx^{k+1}) + \nabla F(\vx^{k})- \vu^{k+1}\|^2.
\end{align}
From the definition of $\vv^{k+1}$ and $ \vu^{k+1}$ in \eqref{eq:def-v-u-k}, we have 
\begin{align}\label{eq:step23}
 	&~\EE_{B_{k+1}}\big[\|\vv^{k+1} - \nabla F(\vx^{k+1}) + \nabla F(\vx^{k})- \vu^{k+1}\|^2\big] \cr 
 	=&~\frac{1}{m^2}\EE_{B_{k+1}}\left\| \sum_{\xi\in B_{k+1}}\left(\nabla f(\vx^{k+1};\xi)-\nabla f(\vx^{k};\xi)  - \nabla F(\vx^{k+1})+\nabla F(\vx^{k})\right) \right\|^2 	 \cr
 	=&~\frac{1}{m^2}\sum_{j=1}^m\EE_{\xi_j^{k+1}}\left\| \nabla f(\vx^{k+1};\xi_j^{k+1})-\nabla f(\vx^{k};\xi_j^{k+1})  - \nabla F(\vx^{k+1})+\nabla F(\vx^{k})  \right\|^2 	 \cr
 	\le&~\frac{1}{m^2}\sum_{j=1}^m\EE_{\xi_j^{k+1}}\left\| \nabla f(\vx^{k+1};\xi_j^{k+1})-\nabla f(\vx^{k};\xi_j^{k+1}) \right\|^2 	 \cr	
 	\le&~\frac{L^2}{m}\|\vx^{k+1}-\vx^{k}\|^2,
\end{align}
where the second equality holds because of the i.i.d. samples in $B_{k+1}$ and the zero mean of the random vector $\nabla f(\vx^{k+1};\xi_j^{k+1})-\nabla f(\vx^{k};\xi_j^{k+1})  - \nabla F(\vx^{k+1})+\nabla F(\vx^{k}) $ resulted from unbiasedness in Assumption~\ref{assump-sgd}, 
the first inequality is due to the fact that the variance of a random vector is upper-bounded by its second moment, and the last inequality follows from Assumption~\ref{assump-smooth}.

Now, take conditional expectation on both sides of \eqref{eq:step2}, use \eqref{eq:step23}, and substitute it into \eqref{eq:one-step-e}. We have
$$\EE_{B_{k+1}}\big[\|\ve^{k+1}\|^2\big] \le (1-\beta_k)^2\|\ve^{k}\|^2 +2\beta_k^2\EE_{B_{k+1}} \big[\|\vv^{k+1} - \nabla F(\vx^{k+1})\|^2\big] + \frac{2(1-\beta_k)^2 L^2}{m} \EE_{B_{k+1}}\big[\|\vx^{k+1}-\vx^{k}\|^2\big] .$$
Taking a full expectation over the above inequality and using  Assumption~\ref{assump-sgd}, we have
	\begin{align}\label{eq:step32}
		\EE\big[\|\ve^{k+1}\|^2\big] \le &~ (1-\beta_k)^2\EE\big[\|\ve^{k}\|^2\big]+ \frac{2\beta_k^2\sigma^2}{m} + \frac{2(1-\beta_k)^2 L^2}{m} \EE\big[\|\vx^{k+1}-\vx^{k}\|^2\big]\cr
		= & ~ (1-\beta_k)^2\EE\big[\|\ve^{k}\|^2\big]+ \frac{2\beta_k^2\sigma^2}{m} + \frac{2(1-\beta_k)^2 \eta_k^2 L^2}{m} \EE\big[\|\vg^{k}\|^2\big],
	\end{align}
where we have used	$\vx^{k+1}-\vx^{k} = -\eta_{k} \vg^{k}$ in the equality.

By similar arguments as those in \eqref{eq:tri-g}, it holds
$$\|\vg^{k}\|^2 \le 2\|\bar\vg^{k}\|^2 + 2\|\vg^{k}-\bar\vg^{k}\|^2\le 2\|\bar\vg^{k}\|^2  + 2\|\ve^{k}\|^2.$$
Plugging the above inequality into \eqref{eq:step32}, we obtain the desired result.
\end{proof}

\subsection{Results with Varying Stepsize}
In this subsection, we show the convergence results of Algorithm~\ref{alg:prox-storm} by taking varying stepsizes. Using Lemmas~\ref{lem:obj-diff} and \ref{lem:bd-ek}, we first show a convergence rate result by choosing the parameters that satisfy a general condition. Then we specify the choice of the parameters.
\begin{theorem}\label{thm:generic-vary}
Under Assumptions~\ref{assump-obj} through \ref{assump-sgd}, let $\{\vx^k\}$ be the iterate sequence from Algorithm~\ref{alg:prox-storm}, with the parameters $\{\eta_k\}$ and $\{\beta_k\}$ satisfying the condition:
\begin{equation}\label{eq:cond-eta-beta}
\frac{1}{4}(1-\eta_k L)-\frac{\eta_k }{5m\eta_{k+1}}(1-\beta_k)^2 > 0,~\text{and}~ \frac{\eta_k}{2}(2-\eta_k L)-\frac{1}{20\eta_k L^2}+\frac{(1-\beta_k)^2(1+\frac{4\eta_k^2 L^2}{m})}{20\eta_{k+1} L^2} \le 0, \,\forall\, k\ge0.
\end{equation}
Let $\{\bar\vg^k\}$ be defined in \eqref{eq:vg-vgbar}. Then
\begin{equation}\label{eq:rate-grad}
\sum_{k=0}^{K-1}\left(\frac{\eta_k}{4}(1-\eta_k L)-\frac{\eta_k^2}{5m\eta_{k+1}}(1-\beta_k)^2\right)\EE[\|\bar\vg^k\|^2] \le \Phi(\vx^{0}) - \Phi^* + \frac{\sigma^2}{20 m_0\eta_0 L^2} + \sum_{k=0}^{K-1}\frac{\beta_k^2\sigma^2}{10m\eta_{k+1} L^2}.
\end{equation}
\end{theorem}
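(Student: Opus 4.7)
The plan is to combine Lemma~\ref{lem:obj-diff} and Lemma~\ref{lem:bd-ek} into a Lyapunov-type telescoping inequality. The two conditions in \eqref{eq:cond-eta-beta} essentially prescribe the Lyapunov weights, so my first move is to read the constants $\tfrac{1}{20}$ and $\tfrac{1}{5m}$ appearing there as hints: I would define
\[
c_k = \frac{1}{20\eta_k L^2}, \qquad k\ge 0,
\]
and track the potential $\EE[\Phi(\vx^k)] + c_k\,\EE[\|\ve^k\|^2]$.

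Next, I would take full expectation in Lemma~\ref{lem:obj-diff} and multiply the inequality of Lemma~\ref{lem:bd-ek} by $c_{k+1}$, then add the two. The resulting one-step estimate groups into three pieces: a coefficient times $\EE[\|\ve^k\|^2]$, a coefficient times $\EE[\|\bar\vg^k\|^2]$, and a noise term $\tfrac{\beta_k^2\sigma^2}{10 m\eta_{k+1}L^2}$. After moving $c_k\EE[\|\ve^k\|^2]$ to the left-hand side, the coefficient on $\EE[\|\ve^k\|^2]$ becomes
\[
\frac{\eta_k}{2}(2-\eta_k L)-\frac{1}{20\eta_k L^2}+\frac{(1-\beta_k)^2\bigl(1+\tfrac{4\eta_k^2 L^2}{m}\bigr)}{20\eta_{k+1}L^2},
\]
which is $\le 0$ by the second half of \eqref{eq:cond-eta-beta}, so this term may simply be dropped. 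The coefficient on $-\EE[\|\bar\vg^k\|^2]$ becomes $\tfrac{\eta_k}{4}(1-\eta_k L)-\tfrac{\eta_k^2(1-\beta_k)^2}{5m\eta_{k+1}}$, which is positive by the first half of \eqref{eq:cond-eta-beta} and is exactly the weight appearing in \eqref{eq:rate-grad}.

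I would then telescope from $k=0$ to $k=K-1$. On the left I retain
\[
\EE[\Phi(\vx^K)] + c_K\EE[\|\ve^K\|^2] - \Phi(\vx^0) - c_0\EE[\|\ve^0\|^2],
\]
and use $\EE[\Phi(\vx^K)]\ge\Phi^*$ (Assumption~\ref{assump-obj}) and $c_K\EE[\|\ve^K\|^2]\ge 0$ to discard the $K$-th terms. For the initial error term, since $\vd^0$ is the mean of $m_0$ i.i.d. unbiased samples, Assumption~\ref{assump-sgd} gives $\EE[\|\ve^0\|^2]\le \sigma^2/m_0$, so $c_0\EE[\|\ve^0\|^2]\le \sigma^2/(20 m_0\eta_0 L^2)$, which is precisely the second summand on the right-hand side of \eqref{eq:rate-grad}. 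Rearranging yields the claim.

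The only non-routine step is spotting the Lyapunov weight $c_k=\tfrac{1}{20\eta_k L^2}$, but the condition \eqref{eq:cond-eta-beta} is engineered so that this choice makes both the $\|\ve^k\|^2$-coefficient nonpositive and the $\|\bar\vg^k\|^2$-coefficient match the statement; once that guess is made, everything else is bookkeeping with the two lemmas and the crude bounds $\EE[\Phi(\vx^K)]\ge\Phi^*$ and $\EE[\|\ve^0\|^2]\le\sigma^2/m_0$. I anticipate no real obstacle beyond keeping the algebra tidy when adding the two inequalities.
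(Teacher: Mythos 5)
Your proposal is correct and follows essentially the same route as the paper: the paper's proof also tracks the potential $\EE[\Phi(\vx^k)]+\frac{1}{20\eta_k L^2}\EE[\|\ve^k\|^2]$, combines Lemmas~\ref{lem:obj-diff} and \ref{lem:bd-ek}, drops the $\|\ve^k\|^2$-term via the second condition in \eqref{eq:cond-eta-beta}, telescopes, and bounds the initial term by $\EE[\|\ve^0\|^2]\le\sigma^2/m_0$. No gaps.
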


\begin{proof}
From Lemmas~\ref{lem:obj-diff} and \ref{lem:bd-ek}, it follows that
\begin{align}\label{eq:diff-merit}
&\EE\left[\Phi(\vx^{k+1})+\frac{\|\ve^{k+1}\|^2}{20\eta_{k+1} L^2}-\Phi(\vx^{k})-\frac{\|\ve^{k}\|^2}{20\eta_k L^2}\right]
\le \EE\left[\frac{\eta_k}{2}(2-\eta_k L)\|\ve^k\|^2 - \frac{\eta_k}{4}(1-\eta_k L)\|\bar\vg^k\|^2-\frac{\|\ve^{k}\|^2}{20\eta_k L^2}\right]\nonumber\\
&\hspace{2cm}+ \frac{1}{20\eta_{k+1} L^2}\EE\left[\frac{2\beta_k^2\sigma^2}{m} + \frac{4(1-\beta_k)^2\eta_k^2 L^2}{m} \|\bar\vg^{k}\|^2 + (1-\beta_k)^2\left(1+\frac{4\eta_k^2 L^2}{m}\right)\|\ve^{k}\|^2\right].
\end{align}
We have from the condition of $\{\beta_k\}$ that
the coefficient of the term $\|\ve^k\|^2$ on the right hand side of \eqref{eq:diff-merit} is nonpositive, and thus we obtain from \eqref{eq:diff-merit} that
$$\EE\left[\Phi(\vx^{k+1})+\frac{\|\ve^{k+1}\|^2}{20\eta_{k+1} L^2}-\Phi(\vx^{k})-\frac{\|\ve^{k}\|^2}{20\eta_k L^2}\right] \le \frac{\beta_k^2\sigma^2}{10m\eta_{k+1} L^2} -\left(\frac{\eta_k}{4}(1-\eta_k L)-\frac{\eta_k^2 }{5m\eta_{k+1}}(1-\beta_k)^2\right)\EE[\|\bar\vg^k\|^2].$$
Summing up the above inequality from $k=0$ through $K-1$ gives
\begin{align*}
&~\EE\left[\Phi(\vx^{K})+\frac{\|\ve^{K}\|^2}{20\eta_{K} L^2}-\Phi(\vx^{0})-\frac{\|\ve^{0}\|^2}{20\eta_0 L^2}\right] \\
\le &~ \sum_{k=0}^{K-1}\frac{\beta_k^2\sigma^2}{10m\eta_{k+1} L^2}-\sum_{k=0}^{K-1}\left(\frac{\eta_k}{4}(1-\eta_k L)-\frac{\eta_k^2}{5m\eta_{k+1}}(1-\beta_k)^2\right)\EE[\|\bar\vg^k\|^2],
\end{align*}
which implies the inequality in \eqref{eq:rate-grad} by $\EE[\|\ve^0\|^2]\le \frac{\sigma^2}{m_0}$.
\end{proof}

Below we specify the choice of parameters and establish complexity results of Algorithm~\ref{alg:prox-storm}. 

\begin{theorem}[convergence rate with varying stepsizes]\label{thm:rate-dynamic}
Under Assumptions~\ref{assump-obj} through \ref{assump-sgd}, let $\{\vx^k\}$ be the iterate sequence from Algorithm~\ref{alg:prox-storm}, with $m_0=m$ and the parameters $\{\eta_k\}$ and $\{\beta_k\}$ set to
\begin{equation}\label{eq:dynamic-para}
\eta_k = \frac{\eta}{L(k+4)^{\frac{1}{3}}},\quad \beta_k = \frac{1+24\eta_k^2 L^2-\frac{\eta_{k+1}}{\eta_k}}{1+4\eta_k^2 L^2}, \,\forall\, k\ge 0,
\end{equation}
where $\eta\le \frac{\sqrt[3]{4}}{8 }$ is a positive number. If $\tau$ is selected according to \eqref{eq:select-tau}, then
\begin{equation}\label{eq:rate-grad-dynamic}
\EE[\|\bar\vg^\tau\|^2] \le \frac{2 \left( L(\Phi(\vx^0)-\Phi^*)+\frac{\sqrt[3]{4}\sigma^2}{20 m \eta}+\frac{\sigma^2}{10m}\big(1152\eta^3 (\frac{5}{4})^{\frac{1}{3}}(\log(K+3)-\log 3) + \frac{1}{3\sqrt[3]{9}\eta}\big)\right)}{3\big(\frac{7}{32}-\frac{1}{5}(\frac{5}{4})^{\frac{1}{3}}\big)\eta\big((K+4)^{\frac{2}{3}}-4^{\frac{2}{3}}\big)}.
\end{equation}
\end{theorem}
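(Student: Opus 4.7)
The strategy is to apply Theorem~\ref{thm:generic-vary} with the schedule in \eqref{eq:dynamic-para}, then bound the two sums that appear. The argument proceeds in three steps: collect uniform bounds on the schedule, verify \eqref{eq:cond-eta-beta} for the chosen $\{\eta_k\}$ and $\{\beta_k\}$, and evaluate the numerator and denominator of the resulting ratio.

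First I collect uniform bounds. Because $\eta \le \sqrt[3]{4}/8$ and $k\ge 0$, one has $\eta_k L = \eta/(k+4)^{1/3} \le 1/8$ and $\eta_k/\eta_{k+1}=((k+5)/(k+4))^{1/3}\le (5/4)^{1/3}$. Solving \eqref{eq:dynamic-para} for $1-\beta_k$ yields the identity
\[
1-\beta_k \,=\, \frac{\eta_{k+1}/\eta_k - 20\eta_k^2 L^2}{1+4\eta_k^2 L^2},
\]
which gives $0 < 1-\beta_k \le 1$ and $\beta_k \le (1-\eta_{k+1}/\eta_k) + 24\eta_k^2 L^2$. Using $(1-\beta_k)^2 \le 1$, $m\ge 1$, and the preceding uniform bounds, the first inequality in \eqref{eq:cond-eta-beta} reduces to the positive numerical constant $\tfrac{7}{32} - \tfrac{1}{5}(5/4)^{1/3} > 0$. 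The second inequality is the main algebraic obstacle: the formula for $\beta_k$ was chosen precisely to make it hold. Substituting the identity above to remove $1-\beta_k$, applying $m\ge 1$ to replace $1+4\eta_k^2 L^2/m$ by $1+4\eta_k^2 L^2$, and clearing denominators reduces the claim to a polynomial inequality in $a=\eta_{k+1}/\eta_k\in[(4/5)^{1/3},1)$ and $b=\eta_k^2 L^2 \in (0,1/64]$; after dropping nonnegative corrections of order $b^{3/2}$, it suffices to check $480 b^2 \le a(24b + 1 - a)$, which follows from the much weaker bound $20b \le a$ available since $20b\le 5/16 < (4/5)^{1/3}\le a$.

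With \eqref{eq:cond-eta-beta} verified, Theorem~\ref{thm:generic-vary} applies. For the denominator of the ratio defining $\EE[\|\bar\vg^\tau\|^2]$, the per-iteration coefficient is bounded below by $\bigl(\tfrac{7}{32}-\tfrac{1}{5}(5/4)^{1/3}\bigr)\eta_k$, and the integral estimate $\sum_{k=0}^{K-1}(k+4)^{-1/3}\ge\tfrac{3}{2}((K+4)^{2/3}-4^{2/3})$ then produces the denominator of \eqref{eq:rate-grad-dynamic}. For the noise term, the elementary bound $\beta_k^2\le 2(1-\eta_{k+1}/\eta_k)^2 + 1152\,\eta_k^4 L^4$ splits $\sum \beta_k^2/\eta_{k+1}$ into two sums: the mean-value estimate $1-\eta_{k+1}/\eta_k \le 1/\bigl(3(k+4)^{2/3}(k+5)^{1/3}\bigr)$ together with $\sum(k+4)^{-5/3}\le \tfrac{3}{2\sqrt[3]{9}}$ produces the constant $\tfrac{L}{3\sqrt[3]{9}\,\eta}$, while $(k+5)^{1/3}/(k+4)^{4/3}\le (5/4)^{1/3}/(k+4)$ combined with $\sum(k+4)^{-1}\le\log(K+3)-\log 3$ produces $1152\,\eta^3 L(5/4)^{1/3}\bigl(\log(K+3)-\log 3\bigr)$. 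Multiplying through by the factor $2L$ that arises from inverting the denominator and using $\EE[\|\ve^0\|^2]\le \sigma^2/m_0$ with $m_0=m$ collects everything into \eqref{eq:rate-grad-dynamic}; invoking the definition of $\tau$ in \eqref{eq:select-tau} converts the weighted-sum bound of \eqref{eq:rate-grad} into a bound on $\EE[\|\bar\vg^\tau\|^2]$. The hardest step is the tight algebraic verification of the second inequality in \eqref{eq:cond-eta-beta}; the rest is routine manipulation of power-function sums.
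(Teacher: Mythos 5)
Your proposal is correct and follows essentially the same route as the paper: verify the two conditions in \eqref{eq:cond-eta-beta} for the schedule \eqref{eq:dynamic-para} (using $\eta_k L\le 1/8$, $\eta_k/\eta_{k+1}\le(5/4)^{1/3}$, and the identity $1-\beta_k=\big(\tfrac{\eta_{k+1}}{\eta_k}-20\eta_k^2L^2\big)/(1+4\eta_k^2L^2)$), invoke Theorem~\ref{thm:generic-vary}, and bound the two sums by the same integral comparisons and the splitting $\beta_k^2\le 2(1-\eta_{k+1}/\eta_k)^2+1152\eta_k^4L^4$. The only cosmetic difference is in checking the second condition, where the paper uses $(1-\beta_k)^2\le 1-\beta_k$ and a chain of linear sufficient conditions while you expand the square and reduce to $480b^2\le a(24b+1-a)$; both verifications are valid.
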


\begin{proof}
Since $\eta\le \frac{\sqrt[3]{4}}{8}$, it holds $\eta_k\le \frac{1}{8L}$. Also, notice $\frac{\eta_k}{\eta_{k+1}}\le (\frac{5}{4})^{\frac{1}{3}}$ or equivalently $\frac{\eta_{k+1}}{\eta_{k}}\ge (\frac{4}{5})^{\frac{1}{3}}$ for all $k\ge0$. Hence, it is straightforward to have $\beta_k\in(0,1)$ and thus $(1-\beta_k)^2 \le 1-\beta_k$ for each $k\ge0$. Now notice $\frac{5m\eta_{k+1}}{4\eta_k}(1-\eta_k L)\ge \frac{5}{4}(\frac{4}{5})^{\frac{1}{3}}\frac{7}{8}>1 \ge (1-\beta_k)^2$, so the first inequality in \eqref{eq:cond-eta-beta} holds. In addition, to ensure the second inequality in \eqref{eq:cond-eta-beta}, it suffices to have $(1-\beta_k)(1+\frac{4\eta_k^2 L^2}{m})\le \frac{\eta_{k+1}}{\eta_{k}} - 10\eta_k\eta_{k+1}L^2(2-\eta_k L)$. Because $20\eta_k^2 L^2 \ge 10\eta_k\eta_{k+1}L^2(2-\eta_k L)$, this inequality is implied by $(1-\beta_k)(1+\frac{4\eta_k^2 L^2}{m})\le \frac{\eta_{k+1}}{\eta_{k}} -20\eta_k^2 L^2$, which is further implied by the choice of $\beta_k$ in \eqref{eq:dynamic-para}. Therefore, both conditions in \eqref{eq:cond-eta-beta} hold, and thus we have \eqref{eq:rate-grad}.

Next we bound the coefficients in \eqref{eq:rate-grad}. First, from $1-\eta_k L \ge\frac{7}{8}$ and $\frac{\eta_k}{\eta_{k+1}}\le (\frac{5}{4})^{\frac{1}{3}}$ for all $k$, we have
\begin{equation}\label{eq:bd-sum-eta-2}
\sum_{k=0}^{K-1}\left(\frac{\eta_k}{4}(1-\eta_k L)-\frac{\eta_k^2}{5m\eta_{k+1}}(1-\beta_k)^2\right)\ge c\sum_{k=0}^{K-1}\eta_k \ge \frac{c\eta}{L}\int_{0}^{K}(x+4)^{-\frac{1}{3}}dx = \frac{3c\eta}{2 L}\left((K+4)^{\frac{2}{3}}-4^{\frac{2}{3}}\right),
\end{equation}
where $c=\frac{7}{32}-\frac{1}{5}(\frac{5}{4})^{\frac{1}{3}} > 0$. Second,
\begin{align}\label{eq:sum-beta-term1}
\sum_{k=0}^{K-1}\frac{\beta_k^2}{\eta_{k+1}} \le \frac{L}{\eta}\sum_{k=0}^{K-1}  (k+5)^{\frac{1}{3}}\left(1+24\eta_k^2 L^2-\frac{\eta_{k+1}}{\eta_k}\right)^2 = \frac{L}{\eta}\sum_{k=0}^{K-1}  (k+5)^{\frac{1}{3}}\left(1+24\eta_k^2 L^2-\frac{(k+4)^{\frac{1}{3}}}{(k+5)^{\frac{1}{3}}} \right)^2.
\end{align}
Note that
\begin{align}\label{eq:k-eta-t1}
\sum_{k=0}^{K-1}  (k+5)^{\frac{1}{3}}\eta_k^4= \frac{\eta^4}{L^4}\sum_{k=0}^{K-1}  (k+5)^{\frac{1}{3}}(k+4)^{-\frac{4}{3}}\le \frac{\eta^4}{L^4}(\tfrac{5}{4})^{\frac{1}{3}}\sum_{k=0}^{K-1}(k+4)^{-1}\le \frac{\eta^4}{L^4}(\tfrac{5}{4})^{\frac{1}{3}}(\log(K+3)-\log 3).
\end{align}
Furthermore, by $a^3-b^3 = (a-b)(a^2+ab+b^2)$ for any $a,b\in\RR$, we have 
$$1-\frac{(k+4)^{\frac{1}{3}}}{(k+5)^{\frac{1}{3}}}=(k+5)^{-\frac{1}{3}}\left((k+5)^{\frac{1}{3}}-(k+4)^{\frac{1}{3}}\right)=\frac{(k+5)^{-\frac{1}{3}}}{(k+5)^{\frac{2}{3}}+(k+5)^{\frac{1}{3}}(k+4)^{\frac{1}{3}}+(k+4)^{\frac{2}{3}}},$$
and thus
\begin{align}\label{eq:k-eta-t2}
\sum_{k=0}^{K-1}  (k+5)^{\frac{1}{3}}\left({\textstyle 1-\frac{(k+4)^{\frac{1}{3}}}{(k+5)^{\frac{1}{3}}} }\right)^2= &~ \sum_{k=0}^{K-1}  \frac{(k+5)^{-\frac{1}{3}}}{\left((k+5)^{\frac{2}{3}}+(k+5)^{\frac{1}{3}}(k+4)^{\frac{1}{3}}+(k+4)^{\frac{2}{3}}\right)^2}\cr
\le &~ \frac{1}{9}\sum_{k=0}^{K-1} (k+4)^{-\frac{5}{3}} \le \frac{1}{6\sqrt[3]{9}}.
\end{align}
Now applying the inequality $(a+b)^2 \le 2a^2+2b^2$ to \eqref{eq:sum-beta-term1} and then using \eqref{eq:k-eta-t1} and \eqref{eq:k-eta-t2}, we obtain
\begin{align}\label{eq:sum-beta-term2}
\sum_{k=0}^{K-1}\frac{\beta_k^2}{\eta_{k+1}} \le 1152\eta^3 L(\tfrac{5}{4})^{\frac{1}{3}}(\log(K+3)-\log 3) + \frac{L}{3\sqrt[3]{9}\eta}.
\end{align}

Therefore, plugging \eqref{eq:bd-sum-eta-2} and \eqref{eq:sum-beta-term2} into  \eqref{eq:rate-grad} and by the selection of $\tau$ in \eqref{eq:select-tau}, we obtain the desired result. 
\end{proof}

\begin{remark}\label{rm:rate-dynamic}
The result in Theorem~\ref{thm:rate-dynamic} does not include the noiseless case, i.e., $\sigma=0$. Nevertheless, if in that case, we can simply choose $\eta_k=\Theta(\frac{1}{L})$ and $\beta_k=1$ for all $k\ge0$. This way, Algorithm~\ref{alg:prox-storm} reduces to the deterministic mirror-prox method, and we can easily obtain $\min_{0\le k < K}\|\bar\vg^k\|^2 = O(\frac{1}{K})$ from \eqref{eq:rate-grad}. 
\end{remark}

By Theorem~\ref{thm:rate-dynamic}, we below estimate the complexity result of Algorithm~\ref{alg:prox-storm} to produce a stochastic $\vareps$-stationary solution.

\begin{corollary}[complexity result with varying stepsizes]\label{cor:complexity-dynamic}
 Let $\vareps>0$ be given and suppose $\sigma>0$. Then under the same conditions of Theorem~\ref{thm:rate-dynamic},  Algorithm~\ref{alg:prox-storm} can produce a stochastic $\vareps$-stationary solution of \eqref{eq:stoc-prob} with a total complexity 
$$T_{\mathrm{total}} = mK=O\left(\max\left\{m\vareps^{-3}\big(L(\Phi(\vx^0)-\Phi^*)\big)^{\frac{3}{2}},\ \vareps^{-3}(|\log\vareps|+|\log\sigma|)^{\frac{3}{2}}\frac{\sigma^3}{\sqrt{m}}\right\}\right).$$ 
\end{corollary}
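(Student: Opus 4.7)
The plan is to start from the rate bound \eqref{eq:rate-grad-dynamic} of Theorem~\ref{thm:rate-dynamic} and determine the smallest $K$ for which $\EE[\|\bar\vg^\tau\|^2]\le \vareps^2$; multiplying this $K$ by $m$ (and absorbing the single initial batch of size $m_0=m$ into the big-$O$) will then yield the total sample count $T_{\mathrm{total}}=mK$. Since the corollary is stated only up to constants, I would freeze $\eta$ at the universal constant $\eta = \frac{\sqrt[3]{4}}{8}$, so that every $\eta$-dependent factor in \eqref{eq:rate-grad-dynamic} collapses into the big-$O$; together with the elementary estimate $(K+4)^{2/3}-4^{2/3}=\Theta(K^{2/3})$ valid for $K$ large, this reduces the bound to the form
$$\EE[\|\bar\vg^\tau\|^2] \;=\; O\!\left(\frac{L\Delta}{K^{2/3}} + \frac{\sigma^2\bigl(1+\log K\bigr)}{m K^{2/3}}\right), \qquad \Delta := \Phi(\vx^0)-\Phi^*.$$

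I would then require each of the two summands to be at most $\vareps^2/2$. The deterministic piece $L\Delta/K^{2/3}\le \vareps^2/2$ gives the lower bound $K=\Omega\bigl((L\Delta)^{3/2}\vareps^{-3}\bigr)$, which multiplied by $m$ yields the first branch $m\,\vareps^{-3}(L\Delta)^{3/2}$ of the claimed maximum. The stochastic piece $\sigma^2(1+\log K)/(m K^{2/3})\le \vareps^2/2$ is implicit in $K$ because of the $\log K$ factor; I would resolve it by the ansatz $K=c\,\sigma^3 m^{-3/2}\vareps^{-3}\bigl(|\log\vareps|+|\log\sigma|\bigr)^{3/2}$ and verify self-consistently that with this choice $\log K=\Theta(|\log\vareps|+|\log\sigma|)$, so that the inequality does hold for a sufficiently large absolute constant $c$. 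Multiplied by $m$, this ansatz gives the second branch $\vareps^{-3}(|\log\vareps|+|\log\sigma|)^{3/2}\sigma^3/\sqrt m$ of the maximum.

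Taking $K$ to be the larger of the two lower bounds produces $T_{\mathrm{total}}=mK$ matching the stated expression, after noting that the $\sigma^2/(m\eta)$ piece and the $1/\eta$ piece in the numerator of \eqref{eq:rate-grad-dynamic} are of smaller or equal order than the $\log K$ piece once $\eta$ is fixed as a constant, hence get absorbed into the stochastic branch. The only non-routine point is the implicit appearance of $K$ inside $\log K$; this I would handle by the self-consistent guess described above, which is the standard technique for inverting near-optimal rates with logarithmic corrections. Everything else reduces to direct algebraic rearrangement of \eqref{eq:rate-grad-dynamic}.
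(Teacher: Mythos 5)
Your proposal is correct and follows essentially the same route as the paper: fix $\eta=\frac{\sqrt[3]{4}}{8}$, collapse \eqref{eq:rate-grad-dynamic} to $\EE[\|\bar\vg^\tau\|^2]=O\big(K^{-2/3}(L(\Phi(\vx^0)-\Phi^*)+\frac{\sigma^2\log K}{m})\big)$, and choose $K$ as the maximum of the two branches. The only difference is that you spell out the self-consistent inversion of the $\log K$ factor, which the paper leaves implicit in its "it suffices to let $K=\Theta(\cdot)$" step.
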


\begin{proof}
By Theorem~\ref{thm:rate-dynamic} with $\eta=\frac{\sqrt[3]{4}}{8 }$, we have 
\begin{equation}\label{eq:bd-vg-tau-dynamic}
\EE[\|\bar\vg^\tau\|^2] = O\left(K^{-\frac{2}{3}}\big(\textstyle L(\Phi(\vx^0)-\Phi^*) + \frac{\sigma^2\log K}{m}\big)\right).
\end{equation}
Hence,  it suffices to let 
$K=\Theta\left(\max\left\{\vareps^{-3}\big(L(\Phi(\vx^0)-\Phi^*)\big)^{\frac{3}{2}},\ \vareps^{-3}(|\log\vareps|+|\log\sigma|)^{\frac{3}{2}}\frac{\sigma^3}{m^{\frac{3}{2}}}\right\}\right)$, to have $\EE[\|\bar\vg^\tau\|^2]\le \vareps^2$.  This completes the proof.
\end{proof}

\begin{remark}\label{rm:dep-sigma-dynamic}
If $m=1$ or $m=O(1)$ independent of $\sigma$, then the total complexity will be $$T_{\mathrm{total}}=O\left(\max\left\{\vareps^{-3}\big(L(\Phi(\vx^0)-\Phi^*)\big)^{\frac{3}{2}}, \, \vareps^{-3}\sigma^3(|\log\vareps|+|\log\sigma|)^{\frac{3}{2}}\right\}\right).$$ 
If $\sigma \ge1$ is big and can be estimated, we can take $m=\Theta(\sigma^2)$. This way, we obtain the total complexity $O\left(\vareps^{-3} \sigma^2\big((|\log\vareps|+\log\sigma)^{\frac{3}{2}} +(L(\Phi(\vx^0)-\Phi^*))^{\frac{3}{2}}\big)\right)$. This result is near-optimal in the sense that its dependence on $\vareps$ has the additional logarithmic term $|\log\vareps|^{\frac{3}{2}}$ compared to the lower bound result in \cite{arjevani2019lower}. In the remaining part of this section, we show that with constant stepsizes, 
Algorithm~\ref{alg:prox-storm} can achieve the optimal complexity result $O(\vareps^{-3})$.
\end{remark}

\subsection{Results with Constant Stepsize}
In this subsection, we show convergence results of Algorithm~\ref{alg:prox-storm} by taking constant stepsizes, i.e., $\eta_k = \eta_0,\forall \, k\ge1$. In order to consider the dependence on the quantities $L$, $\Phi(\vx^0)-\Phi^*$ and $\sigma^2$, we give two settings that yield two different results, but each result has the same dependence on the target accuracy $\vareps$. The first result is obtained from Theorem~\ref{thm:generic-vary} by taking constant stepsizes.

\begin{theorem}[convergence rate I with constant stepsizes]\label{thm:rate-const}
Under Assumptions~\ref{assump-obj} through \ref{assump-sgd}, let $\{\vx^k\}$ be the iterate sequence from Algorithm~\ref{alg:prox-storm}, with the parameters $\{\eta_k\}$ and $\{\beta_k\}$ set to
\begin{equation}\label{eq:const-para}
\eta_k = \frac{\eta}{L\sqrt[3]{K}},\quad \beta_k = \beta=\frac{4\eta^2/m+10\eta^2(2-\eta /K^{\frac{1}{3}})}{K^{\frac{2}{3}}+4\eta^2/m}, \,\forall\, k\ge 0,
\end{equation}
where $\eta < \frac{\sqrt[3]{K}}{5 }$ is a positive number. If $\tau$ is selected from $\{0,1,\ldots,K-1\}$ uniformly at random, then
\begin{equation}\label{eq:rate-grad-const}
\EE[\|\bar\vg^\tau\|^2] \le \frac{1}{ K^{\frac{2}{3}}\left(\textstyle\frac{1}{4}\big(1- \frac{\eta}{\sqrt[3]{K}}\big) -  \frac{1}{5}\right)} \left(\frac{L\big(\Phi(\vx^{0}) - \Phi^*\big)}{\eta} + \frac{\sigma^2 \sqrt[3]{K}}{20 m_0\eta^2}+\frac{24^2\sigma^2\eta^2}{10m}\right).
\end{equation}
\end{theorem}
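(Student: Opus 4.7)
The plan is to reduce this theorem to Theorem~\ref{thm:generic-vary} by verifying the admissibility condition~\eqref{eq:cond-eta-beta} for the constant-stepsize choice~\eqref{eq:const-para}, and then simplifying the generic bound~\eqref{eq:rate-grad}. I proceed in three stages.

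\emph{Stage 1: verifying~\eqref{eq:cond-eta-beta}.} The critical observation is that the specific $\beta$ in~\eqref{eq:const-para} has been engineered so that the algebraic identity
\[
(1-\beta)\left(1 + \tfrac{4\eta_k^2 L^2}{m}\right) = 1 - 10\eta_k^2 L^2 (2-\eta_k L)
\]
holds exactly; this follows by substituting $\eta_k L = \eta/\sqrt[3]{K}$ into the explicit formula for $\beta$ and cross-multiplying. Using $\eta < \sqrt[3]{K}/5$ together with $m\ge 1$, one checks that $\beta\in[0,1]$, hence $(1-\beta)^2 \le 1-\beta$. Multiplying the identity by $(1-\beta)$ and rearranging gives the second inequality of~\eqref{eq:cond-eta-beta}. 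The first inequality is easier: since $\eta_k=\eta_{k+1}$, it reduces to $\tfrac{1}{4}(1-\eta/\sqrt[3]{K}) > \tfrac{(1-\beta)^2}{5m}$, where the right-hand side is at most $\tfrac{1}{5m}\le \tfrac{1}{5}$ while the left-hand side strictly exceeds $\tfrac{1}{4}\cdot \tfrac{4}{5}=\tfrac{1}{5}$.

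\emph{Stage 2: applying the generic bound.} With~\eqref{eq:cond-eta-beta} in hand, Theorem~\ref{thm:generic-vary} yields~\eqref{eq:rate-grad}. Because $\eta_k\equiv\eta_0$ and $\beta_k\equiv \beta$, the per-iteration LHS coefficient is constant and lower bounded, using the same estimate $(1-\beta)^2/(5m)\le 1/5$ as above, by $\eta_0\bigl(\tfrac{1}{4}(1-\eta/\sqrt[3]{K}) - \tfrac{1}{5}\bigr)$. Taking $\tau$ uniform on $\{0,\dots,K-1\}$, we have $\EE[\|\bar\vg^\tau\|^2]=\tfrac{1}{K}\sum_k\EE[\|\bar\vg^k\|^2]$, so dividing \eqref{eq:rate-grad} through by $K\eta_0\bigl(\tfrac{1}{4}(1-\eta/\sqrt[3]{K})-\tfrac{1}{5}\bigr) = \tfrac{\eta K^{2/3}}{L}\bigl(\tfrac{1}{4}(1-\eta/\sqrt[3]{K})-\tfrac{1}{5}\bigr)$ already produces the prefactor $\tfrac{1}{K^{2/3}(\cdots)}$ and the first summand $\tfrac{L(\Phi(\vx^0)-\Phi^*)}{\eta}$ in~\eqref{eq:rate-grad-const}.

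\emph{Stage 3: handling the noise terms.} Using $1/(\eta_0 L^2)=\sqrt[3]{K}/(\eta L)$, the term $\sigma^2/(20 m_0\eta_0 L^2)$ contributes exactly $\sigma^2\sqrt[3]{K}/(20 m_0 \eta^2)$ after dividing by $\eta K^{2/3}/L$, which is the middle summand. For the $\beta^2$ term, bound the numerator of $\beta$ using $\eta<\sqrt[3]{K}/5$ and $m\ge 1$ by $\eta^2(4/m + 20)\le 24\eta^2$ and its denominator by $K^{2/3}$, giving $\beta\le 24\eta^2/K^{2/3}$ and hence $\beta^2\le 576\eta^4/K^{4/3}$. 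The sum $\sum_{k=0}^{K-1}\beta^2\sigma^2/(10m\eta_0 L^2) = K\beta^2 \sigma^2\sqrt[3]{K}/(10m\eta L)$ is then at most $576\eta^3\sigma^2/(10mL)$, which after division by $\eta K^{2/3}/L$ becomes $24^2\sigma^2\eta^2/(10m)$ inside the brackets, as claimed.

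\emph{Main obstacle.} The only nontrivial step is the algebraic identity at the start of Stage 1; it is not at all obvious from~\eqref{eq:const-para} that the chosen $\beta$ exactly saturates the second condition of~\eqref{eq:cond-eta-beta} (up to the harmless factor $(1-\beta)\le 1$). Everything else is careful bookkeeping and elementary bounds.
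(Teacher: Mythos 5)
Your proposal is correct and follows essentially the same route as the paper's proof: verify the admissibility condition \eqref{eq:cond-eta-beta} via $(1-\beta)^2\le 1-\beta$, invoke Theorem~\ref{thm:generic-vary}, lower-bound the constant per-iteration coefficient by $\eta_0\bigl(\tfrac{1}{4}(1-\eta/\sqrt[3]{K})-\tfrac{1}{5}\bigr)$, and bound the noise sum using $\beta\le 24\eta^2/K^{2/3}$, exactly matching \eqref{eq:sum-coeff1} and \eqref{eq:sum-coeff2}. Your Stage~1 identity simply makes explicit the algebra that the paper compresses into ``it is easy to verify.''
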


\begin{proof}
First note $\frac{\eta}{\sqrt[3]{K}} < \frac{1}{5}$ and thus $\beta \in (0,1)$. Now it is easy to verify by using $(1-\beta)^2 < 1-\beta$ that the conditions in \eqref{eq:cond-eta-beta} are satisfied. Hence, the result in \eqref{eq:rate-grad} holds.

Second, by the choice of $\eta_k$ and $\beta_k$, we have
\begin{equation}\label{eq:sum-coeff1}
\begin{aligned}
&~\sum_{k=0}^{K-1}\left(\textstyle\frac{\eta_k}{4}(1-\eta_k L)-\frac{\eta_k^2}{5m\eta_{k+1}}(1-\beta_k)^2\right)\cr
\ge&~ \sum_{k=0}^{K-1} \left(\textstyle\frac{\eta}{4L\sqrt[3]{K}}\big(1- \frac{\eta }{\sqrt[3]{K}}\big) -  \frac{\eta}{5L\sqrt[3]{K}}\right) =  \textstyle \frac{\eta}{L} K^{\frac{2}{3}} \left(\textstyle\frac{1}{4}\big(1- \frac{\eta }{\sqrt[3]{K}}\big) -  \frac{1}{5}\right),
\end{aligned}
\end{equation}
and 
\begin{equation}\label{eq:sum-coeff2}
\begin{aligned}
\sum_{k=0}^{K-1}\frac{\beta_k^2\sigma^2}{10m\eta_{k+1} L^2} \le \sum_{k=0}^{K-1} \frac{\sigma^2\sqrt[3]{K}}{10m\eta L}\left(\frac{4\eta^2+20\eta^2}{K^{\frac{2}{3}}}\right)^2 = \frac{24^2\sigma^2\eta^3}{10m L}.
\end{aligned}
\end{equation}
Plugging \eqref{eq:sum-coeff1} and \eqref{eq:sum-coeff2} into \eqref{eq:rate-grad}, we obtain the desired result by the selection of $\tau$ in \eqref{eq:select-tau}.
\end{proof}
From \eqref{eq:rate-grad-const}, we see that in order to have the $O(K^{-\frac{2}{3}})$ convergence rate, we need to set $m_0=\Theta(\sqrt[3]{K})$. Next we set $m_0$ in this way and estimate the complexity result of Algorithm~\ref{alg:prox-storm} with the constant stepsize.

\begin{corollary}[complexity result I with constant stepsizes]\label{cor:complexity-const}
 Let $\vareps>0$ be given. Under Assumptions~\ref{assump-obj} through \ref{assump-sgd}, let $\{\vx^k\}$ be the iterate sequence from Algorithm~\ref{alg:prox-storm} with $m_0\ge c_0 \sqrt[3]{K}$ and the parameters $\{\eta_k\}$ and $\{\beta_k\}$ set to  those in \eqref{eq:const-para} where $\eta \le \frac{\sqrt[3]{K}}{10 }$. Let $\tau$ be selected from $\{0,1,\ldots,K-1\}$ uniformly at random. Then $\vx^\tau$ is a stochastic $\vareps$-stationary solution of \eqref{eq:stoc-prob} if  
\begin{equation}\label{eq:K-const-1}
K = \left\lceil\frac{40^{\frac{3}{2}}\left(\frac{L\big(\Phi(\vx^{0}) - \Phi^*\big)}{\eta} + \frac{\sigma^2}{20 c_0\eta^2}+\frac{24^2\sigma^2\eta^2}{10m}\right)^{\frac{3}{2}}}{\vareps^3}\right\rceil.
\end{equation} 
\end{corollary}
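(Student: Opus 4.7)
The plan is to take the rate bound \eqref{eq:rate-grad-const} from Theorem~\ref{thm:rate-const} and specialize it using the two extra hypotheses $\eta \le \sqrt[3]{K}/10$ and $m_0 \ge c_0\sqrt[3]{K}$; the prescribed value of $K$ will then emerge by requiring the resulting bound on $\EE[\|\bar\vg^\tau\|^2]$ to be at most $\vareps^2$ and solving for $K$.

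First I would note that the hypothesis $\eta \le \sqrt[3]{K}/10 < \sqrt[3]{K}/5$ satisfies the standing precondition of Theorem~\ref{thm:rate-const}, so \eqref{eq:rate-grad-const} applies. Next I would simplify the denominator on the right-hand side of \eqref{eq:rate-grad-const}: since $\eta/\sqrt[3]{K}\le 1/10$,
$$\frac{1}{4}\Bigl(1-\frac{\eta}{\sqrt[3]{K}}\Bigr)-\frac{1}{5}\ge \frac{1}{4}\cdot\frac{9}{10}-\frac{1}{5}=\frac{1}{40},$$
so the leading factor is at most $40/K^{2/3}$. Then the hypothesis $m_0\ge c_0\sqrt[3]{K}$ collapses the initialization-variance term, since $\frac{\sigma^2 \sqrt[3]{K}}{20 m_0 \eta^2} \le \frac{\sigma^2}{20 c_0 \eta^2}$. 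Combining these two simplifications with \eqref{eq:rate-grad-const} yields
$$\EE[\|\bar\vg^\tau\|^2] \le \frac{40}{K^{2/3}}\left(\frac{L(\Phi(\vx^{0}) - \Phi^*)}{\eta} + \frac{\sigma^2}{20 c_0\eta^2}+\frac{24^2\sigma^2\eta^2}{10m}\right).$$

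Finally, requiring the right-hand side to be at most $\vareps^2$ gives the inequality
$$K^{2/3} \ge \frac{40}{\vareps^2}\left(\frac{L(\Phi(\vx^{0}) - \Phi^*)}{\eta} + \frac{\sigma^2}{20 c_0\eta^2}+\frac{24^2\sigma^2\eta^2}{10m}\right),$$
which, after raising both sides to the $3/2$ power and taking the ceiling, matches exactly the choice of $K$ in \eqref{eq:K-const-1}. Hence under that choice $\vx^\tau$ is a stochastic $\vareps$-stationary solution.

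There is essentially no obstacle here; the heavy lifting was already done in Theorem~\ref{thm:rate-const}, and this corollary is a clean numerical instantiation of that rate bound. The only slightly delicate point is the internal consistency of the hypothesis $\eta \le \sqrt[3]{K}/10$ with the prescribed $K$: since $K$ in \eqref{eq:K-const-1} is of order $\vareps^{-3}$, any fixed $\eta$ chosen independently of $\vareps$ automatically satisfies $\eta \le \sqrt[3]{K}/10$ once $\vareps$ is sufficiently small, so the two hypotheses are mutually compatible.
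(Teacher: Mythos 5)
Your proposal is correct and follows essentially the same route as the paper's own proof: lower-bound the factor $\frac{1}{4}\bigl(1-\frac{\eta}{\sqrt[3]{K}}\bigr)-\frac{1}{5}$ by $\frac{1}{40}$ using $\eta\le\frac{\sqrt[3]{K}}{10}$, absorb the initialization term via $m_0\ge c_0\sqrt[3]{K}$, and then solve the resulting bound for $K$. Your closing remark on the mutual consistency of the choice of $\eta$ and the prescribed $K$ is a sensible extra observation but not needed for the argument.
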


\begin{proof}
When $\eta \le \frac{\sqrt[3]{K}}{10}$, it holds $\frac{1}{4}\big(1- \frac{\eta}{\sqrt[3]{K}}\big) -  \frac{1}{5}\ge \frac{1}{40}$. Hence, \eqref{eq:rate-grad-const} with $m_0\ge c_0 \sqrt[3]{K}$ implies
$$\EE[\|\bar\vg^\tau\|^2] \le \frac{40}{K^{\frac{2}{3}}} \left( \frac{L\big(\Phi(\vx^{0}) - \Phi^*\big)}{\eta} + \frac{\sigma^2}{20 c_0\eta^2}+\frac{24^2\sigma^2\eta^2}{10m}\right),$$
which together with the condition of $K$ in \eqref{eq:K-const-1} gives $\EE[\|\bar\vg^\tau\|^2] \le\vareps^2$. This completes the proof.
\end{proof}

\begin{remark}\label{rm:dep-sigma-const}
Suppose that $\sigma\ge1$ and can be estimated. Also, assume $L=\Omega(1)$ and $\Phi(\vx^{0}) - \Phi^*=\Omega(1)$. In this case, we let $\eta=\Theta\left(\sigma^{-\frac{2}{3}}\big( L(\Phi(\vx^{0}) - \Phi^*)\big)^{\frac{1}{3}}\right)$, $c_0=\Theta(\sigma^{\frac{8}{3}})$, and $m=O(1)$ independent of $\sigma$. Then from \eqref{eq:K-const-1}, we have
$K=O\left(\vareps^{-3}\sigma L(\Phi(\vx^{0}) - \Phi^*)\right)$. With this choice, the total number of sample gradients will be 
\begin{equation}\label{eq:dep-sigma-const}
T_{\mathrm{total}}=m_0 + m(K-1) = O\left(\vareps^{-1}\sigma^3\big(L(\Phi(\vx^{0}) - \Phi^*)\big)^{\frac{1}{3}}+\vareps^{-3}\sigma L(\Phi(\vx^{0}) - \Phi^*)\right).
\end{equation}
The dependence on the pair $(\vareps, \sigma)$ matches with the result in \cite{tran2021hybrid}.
\end{remark}

The complexity result given in \eqref{eq:dep-sigma-const} has a low dependence on $(\vareps, \sigma, L(\Phi(\vx^{0}) - \Phi^*) )$ in the sense that $\vareps^{-3}$ only multiplies with $\sigma L(\Phi(\vx^{0}) - \Phi^*)$ but not a higher order. However, the drawback is that the initial batch $m_0$ must be in the order of $\vareps^{-1}$ to obtain the complexity result $O(\vareps^{-3})$. Our second result with constant stepsizes will relax the requirement. 
We utilize the momentum accumulation in the parameter of \eqref{eq:step32} and give our novel convergence analysis, by introducing the following quantity
\begin{equation}\label{eq:gamma}
	\Gamma_k = \left\{
	\begin{array}{ll}
		\prod_{i=0}^{k-1}(1-\beta_i)^2, & \text{ if } k\ge 1, \\[0.2cm]
		1, & \text{ if } k=0.
	\end{array}
	\right.
\end{equation}

We first give a generic result below under certain conditions on the parameters. Then, we will specify the choice of parameters to satisfy the conditions. 
\begin{theorem}\label{thm:main}
	Under Assumptions~\ref{assump-obj} through \ref{assump-sgd}, let $\{\vx^k\}$ be the iterate sequence from Algorithm~\ref{alg:prox-storm}. Suppose there are constants $A$ and $B$ such that the parameters $\{\eta_k\}$ and $\{\beta_k\}$ satisfying the conditions:
	\begin{equation}\label{eq:cond-eta-beta1}
		2\eta_k L+ \frac{4 L^2}{m} \frac{\eta_{k}}{\Gamma_{k} } \sum_{j=k+1}^{K-1}\eta_j\Gamma_{j} \le 1, \,\forall\, k=0,\ldots,K-1,
	\end{equation}
	\begin{equation}\label{eq:cond-eta-beta2}
\sum_{k=1}^{K-1}\eta_k\Gamma_{k}\le A,~\text{and}~\sum_{k=1}^{K-1}\eta_k\Gamma_{k}\sum_{j=0}^{k-1}\frac{\beta_j^2}{\Gamma_{j+1} }\le B,
\end{equation}
where $K$ is the maximum number of iterations in Algorithm~\ref{alg:prox-storm}. 
	Let $\{\bar\vg^k\}$ be defined in \eqref{eq:vg-vgbar}. Then	
	\begin{equation}\label{eq:mainthm}
		\sum_{k=0}^{K-1} \eta_k \EE\big[\|\bar\vg^{k}\|^2\big] 
		\le	12\big[\Phi(\vx^0)-\Phi^*\big] +4(2A+3)\frac{\sigma^2}{ m_0}+ 16B\frac{\sigma^2}{m} .
	\end{equation}	
\end{theorem}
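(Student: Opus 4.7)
The plan is to combine a sharpened one-step descent inequality with a telescoped bound on the gradient-error sequence $\{E_k:=\EE[\|\ve^k\|^2]\}$; the crux is an algebraic identity that converts what would otherwise be a circular bound into a strict one.

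First, I would restart from \eqref{eq:chg-obj-0} in the proof of Lemma~\ref{lem:obj-diff} (before the conversion to $\bar\vg^k$) and, after taking expectations and summing, obtain
\begin{equation}\label{eq:propAineq}
\sum_{k=0}^{K-1}\tfrac{\eta_k(1-\eta_k L)}{2}\EE\bigl[\|\vg^k\|^2\bigr]\le \Phi(\vx^0)-\Phi^*+\tfrac{1}{2}\sum_{k=0}^{K-1}\eta_k E_k,
\end{equation}
keeping the factor $(1-\eta_k L)$ intact rather than lower-bounding it by $1/2$.

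Next, I would return to equation \eqref{eq:step32} in the proof of Lemma~\ref{lem:bd-ek}, i.e., the cleaner form $E_{k+1}\le(1-\beta_k)^2 E_k+\tfrac{2\beta_k^2\sigma^2}{m}+\tfrac{2(1-\beta_k)^2\eta_k^2 L^2}{m}\EE[\|\vg^k\|^2]$. Dividing by $\Gamma_{k+1}$ and telescoping (using $(1-\beta_j)^2/\Gamma_{j+1}=1/\Gamma_j$) gives an explicit bound on $E_k$ in terms of $E_0$, an accumulated variance term, and a weighted sum of past $\EE[\|\vg^j\|^2]$. Multiplying by $\eta_k$, summing over $k=1,\ldots,K-1$, swapping the order of the nested summations, and invoking the two bounds in \eqref{eq:cond-eta-beta2} absorbs the first two contributions into $AE_0$ and $\tfrac{2B\sigma^2}{m}$. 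The third contribution has coefficient $\tfrac{2L^2\eta_j^2 T_j}{m\Gamma_j}$, where $T_j:=\sum_{k=j+1}^{K-1}\eta_k\Gamma_k$, which is exactly what condition \eqref{eq:cond-eta-beta1} bounds by $\tfrac{\eta_j(1-2\eta_j L)}{2}$. After adding back the $k=0$ term (with $\Gamma_0=1$), I arrive at
\begin{equation}\label{eq:propBineq}
\sum_{k=0}^{K-1}\eta_k E_k\le (A+\eta_0)E_0+\tfrac{2B\sigma^2}{m}+\tfrac{1}{2}\sum_{j=0}^{K-1}\eta_j(1-2\eta_j L)\EE\bigl[\|\vg^j\|^2\bigr],
\end{equation}
again preserving $(1-2\eta_j L)$.

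Plugging \eqref{eq:propBineq} into \eqref{eq:propAineq} and collecting the $\EE[\|\vg^k\|^2]$ terms, the net coefficient on $\eta_k\EE[\|\vg^k\|^2]$ becomes $\tfrac{1}{4}\bigl[2(1-\eta_k L)-(1-2\eta_k L)\bigr]=\tfrac{1}{4}$ by a clean algebraic identity, yielding $\sum_k\eta_k\EE[\|\vg^k\|^2]\le 4(\Phi(\vx^0)-\Phi^*)+2(A+\eta_0)E_0+\tfrac{4B\sigma^2}{m}$. Substituting back into \eqref{eq:propBineq} produces a matching bound on $\sum_k\eta_k E_k$, and then the triangle bound $\|\bar\vg^k\|^2\le 2\|\vg^k\|^2+2\|\ve^k\|^2$ (from the nonexpansiveness \eqref{eq:nonexp-P}), combined with $E_0\le\sigma^2/m_0$ from Assumption~\ref{assump-sgd}, gives the target inequality \eqref{eq:mainthm} after collecting constants.

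The main obstacle is precisely avoiding the circularity that the substitution threatens to induce: if one upper-bounds $(1-\eta_k L)$ by $1/2$ in \eqref{eq:propAineq} or $(1-2\eta_k L)$ by $1$ in \eqref{eq:propBineq}, the $\EE[\|\vg^k\|^2]$ terms cancel identically on both sides and one is left with a vacuous inequality. The technical punchline is to preserve both factors until the final combination, so that the identity $2(1-\eta_k L)-(1-2\eta_k L)=1$ produces the positive residual coefficient $\tfrac{1}{4}$. A secondary bookkeeping point is that condition \eqref{eq:cond-eta-beta1} is specifically engineered to control the quantity $\tfrac{\eta_k}{\Gamma_k}T_k$ that emerges after swapping the order of summation in the telescoped error bound; without it, the past-gradient term in the unrolled $E_k$ cannot be absorbed at all.
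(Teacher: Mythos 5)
Your proposal is correct and follows essentially the same route as the paper's proof: telescope the descent inequality \eqref{eq:chg-obj-0} in terms of $\|\vg^k\|^2$, unroll the error recursion \eqref{eq:step32} after normalizing by $\Gamma_k$, swap the order of summation so that condition \eqref{eq:cond-eta-beta1} controls the accumulated past-gradient coefficient, and finish with \eqref{eq:cond-eta-beta2} and the bound $\|\bar\vg^k\|^2\le 2\|\vg^k\|^2+2\|\ve^k\|^2$. Your identity $2(1-\eta_k L)-(1-2\eta_k L)=1$ is just a repackaging of the paper's direct use of \eqref{eq:cond-eta-beta1} to lower-bound the combined coefficient by $\eta_k/4$, so the two arguments are the same up to bookkeeping.
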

\begin{proof}
We begin by taking the total expectation and 
telescoping the inequality in \eqref{eq:chg-obj-0} over $k=0,\ldots,K-1$ to obtain
\begin{align*}
	\EE\big[\Phi(\vx^{K})\big]-\Phi(\vx^0) 
	\le&~ \sum_{k=0}^{K-1}\frac{\eta_k}{2} \EE\big[\|\ve^k\|^2\big] -\sum_{k=0}^{K-1} \frac{\eta_k}{2}(1-\eta_k L)\EE\big[\|\vg^k\|^2\big]\\
	\le&~ \frac{\sigma^2}{m_0}+ \sum_{k=1}^{K-1}\frac{\eta_k}{2} \EE\big[\|\ve^k\|^2\big] -\sum_{k=0}^{K-1} \frac{\eta_k}{2}(1-\eta_k L)\EE\big[\|\vg^k\|^2\big],
\end{align*}
where we have used $\EE\big[\|\ve^{0}\|^2\big]\le \frac{\sigma^2}{m_0}$ by Assumption~\ref{assump-sgd}. Since $\Phi(\vx^{K}) \ge \Phi^*$ from Assumption~\ref{assump-obj}, the above inequality implies 
\begin{equation}\label{eq:main1}
	\sum_{k=0}^{K-1} \frac{\eta_k}{2}(1-\eta_k L)\EE\big[\|\vg^k\|^2\big] 
	\le \Phi(\vx^0)-\Phi^* +\frac{\sigma^2}{m_0}+ \sum_{k=1}^{K-1}\frac{\eta_k}{2} \EE\big[\|\ve^k\|^2\big].
\end{equation}	

In addition, we divide both sides of \eqref{eq:step32} by $\Gamma_{k+1}$ and obtain from the definition of $\Gamma_{k+1}$ in \eqref{eq:gamma} that
	$$\frac{1}{\Gamma_{k+1} }\EE\big[\|\ve^{k+1}\|^2\big] \le \frac{1}{\Gamma_{k} }\EE\big[\|\ve^{k}\|^2\big] +\frac{1}{\Gamma_{k+1} }\frac{2\beta_k^2\sigma^2}{m} + \frac{1}{\Gamma_{k} }\frac{2\eta_{k}^2 L^2}{m} \EE\big[\|\vg^{k}\|^2\big],\, \forall\, k\ge 0.$$
Let $j=0,\ldots,k-1$ be another index on which the above inequality is telescoped. We obtain
	$$\frac{1}{\Gamma_{k} }\EE\big[\|\ve^{k}\|^2\big] \le \EE\big[\|\ve^{0}\|^2\big] +\sum_{j=0}^{k-1}\frac{1}{\Gamma_{j+1} }\frac{2\beta_j^2\sigma^2}{m} + \sum_{j=0}^{k-1}\frac{1}{\Gamma_{j} }\frac{2\eta_{j}^2 L^2}{m} \EE\big[\|\vg^{j}\|^2\big],\, \forall\, k\ge 1.$$
Multiplying $\Gamma_{k}$ to both sides of the above inequality and rearranging it gives
$$ 	\EE\big[\|\ve^{k}\|^2\big] \le \Gamma_{k}\bigg(\frac{\sigma^2}{m_0} +\frac{2\sigma^2}{m}\sum_{j=0}^{k-1}\frac{\beta_j^2}{\Gamma_{j+1} } \bigg)+ \frac{2 L^2}{m}\sum_{j=0}^{k-1}\frac{\Gamma_{k}}{\Gamma_{j} } \eta_{j}^2  \EE\big[\|\vg^{j}\|^2\big],\, \forall\, k\ge 1,$$
where we have used $\EE\big[\|\ve^{0}\|^2\big]\le \frac{\sigma^2}{m_0}$ again. 
 Now multiply $\eta_k$ to the above inequality and sum it up over $k=1,\ldots,K-1$ to have
\begin{align}\label{eq:main2}
 	\sum_{k=1}^{K-1}\eta_k\EE\big[\|\ve^{k}\|^2\big] 
 	\le&~ \sigma^2\sum_{k=1}^{K-1}\eta_k\Gamma_{k}\bigg(\frac{1}{m_0} +\frac{2}{m} \sum_{j=0}^{k-1}\frac{\beta_j^2}{\Gamma_{j+1} } \bigg)+\frac{2 L^2}{m}\sum_{k=1}^{K-1} \sum_{j=0}^{k-1}\frac{\eta_k\Gamma_{k}}{\Gamma_{j} } \eta_{j}^2  \EE\big[\|\vg^{j}\|^2\big] \cr
 	=&~ \sigma^2\sum_{k=1}^{K-1}\eta_k\Gamma_{k}\bigg(\frac{1}{m_0} +\frac{2}{m} \sum_{j=0}^{k-1}\frac{\beta_j^2}{\Gamma_{j+1} } \bigg)+\frac{2 L^2}{m}\sum_{j=0}^{K-2}\frac{\eta_{j}^2}{\Gamma_{j} }\bigg(\sum_{k=j+1}^{K-1}\eta_k\Gamma_{k}\bigg)    \EE\big[\|\vg^{j}\|^2\big] \cr
 	=&~ \sigma^2\sum_{k=1}^{K-1}\eta_k\Gamma_{k}\bigg(\frac{1}{m_0} +\frac{2}{m} \sum_{j=0}^{k-1}\frac{\beta_j^2}{\Gamma_{j+1} } \bigg)+\frac{2 L^2}{m}\sum_{k=0}^{K-1}\frac{\eta_{k}^2}{\Gamma_{k} }\bigg(\sum_{j=k+1}^{K-1}\eta_j\Gamma_{j}\bigg)    \EE\big[\|\vg^{k}\|^2\big],
\end{align}
where the first equality follows by swapping summation, and the second equality is obtained by swapping indices and realizing that the coefficient for $\EE\big[\|\vg^{K-1}\|^2\big]$ is null.

Now we have by substituting \eqref{eq:main2} into \eqref{eq:main1} and rearranging terms that
$$
	\sum_{k=0}^{K-1} \frac{\eta_k}{2}\bigg(1-\eta_k L- \frac{2 L^2}{m} \frac{\eta_{k}}{\Gamma_{k} } \sum_{j=k+1}^{K-1}\eta_j\Gamma_{j}\bigg)\EE\big[\|\vg^k\|^2\big] 
	\le \Phi(\vx^0)-\Phi^* +\frac{\sigma^2}{m_0}+ \frac{\sigma^2}{2}\sum_{k=1}^{K-1}\eta_k\Gamma_{k}\bigg(\frac{1}{m_0} +\frac{2}{m} \sum_{j=0}^{k-1}\frac{\beta_j^2}{\Gamma_{j+1} } \bigg),
$$
which together with the conditions in \eqref{eq:cond-eta-beta1} and \eqref{eq:cond-eta-beta2} gives the bound for $\vg^k$:
\begin{equation}\label{eq:main3}
	\sum_{k=0}^{K-1}  \eta_k \EE\big[\|\vg^k\|^2\big] 
	\le 4\big[\Phi(\vx^0)-\Phi^*\big]  + 2(A+2)\frac{\sigma^2}{ m_0}+ 4B\frac{\sigma^2}{m} .
\end{equation}	
Use \eqref{eq:cond-eta-beta1} again and substitute \eqref{eq:main3} into \eqref{eq:main2}. We obtain the bound for $\ve^k$:
\begin{equation}\label{eq:main4}
	\sum_{k=0}^{K-1} \eta_k \EE\big[\|\ve^k\|^2\big] 
	\le A\frac{\sigma^2}{ m_0}+ 2B\frac{\sigma^2}{m} +\sum_{k=0}^{K-1}\frac{\eta_{k}}{2} \EE\big[\|\vg^{k}\|^2\big]
	\le 2\big[\Phi(\vx^0)-\Phi^*\big] +2(A+1)\frac{\sigma^2}{ m_0}+ 4B\frac{\sigma^2}{m} .
\end{equation}	

Finally, we have from \eqref{eq:tri-g} that  
$\|\bar\vg^{k}\|^2 \le 2\|\vg^{k}\|^2  + 2\|\ve^{k}\|^2.$ Sum up this inequality over $k=0,\ldots,K-1$ and
substitute \eqref{eq:main3} and  \eqref{eq:main4} into the summation. We obtain the result in \eqref{eq:mainthm}.
\end{proof}

Below we specify the choice of parameters and establish complexity results of Algorithm~\ref{alg:prox-storm}. The following lemma will be used to show the conditions in \eqref{eq:cond-eta-beta1} and \eqref{eq:cond-eta-beta2}. 

\begin{lemma}\label{lem:keylem}
	Let 
	\begin{equation}\label{eq:beta-def}
		\beta_{k}=3\big[(k+3)^{1/3}-(k+2)^{1/3}\big],\quad k\ge 0,
	\end{equation}
Then we  have 
\begin{equation}\label{eq:keylem1}
	\sum_{j=k+1}^{K-1}\frac{\Gamma_{j}}{\Gamma_{k}} \le \frac{1}{2}(k+2)^{2/3}+\frac{1}{6}(k+2)^{1/3}+\frac{1}{36}. 
\end{equation}
\end{lemma}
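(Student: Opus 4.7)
My plan is to bound each factor $(1-\beta_i)^2$ by an exponential, telescope the resulting sum of $\beta_i$'s, and then compare the sum to a tractable integral.

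\textbf{Step 1: exponential majorization.} Since $\beta_0 = 3(3^{1/3}-2^{1/3}) < 1$ and $\beta_k$ is decreasing (its defining expression is a discrete forward difference of the concave function $(k+3)^{1/3}$), we have $1-\beta_i \in (0,1)$. The classical inequality $1-x\le e^{-x}$ then gives $(1-\beta_i)^2\le e^{-2\beta_i}$, and hence
\begin{equation*}
\frac{\Gamma_j}{\Gamma_k} \;=\; \prod_{i=k}^{j-1}(1-\beta_i)^2 \;\le\; \exp\!\Big(-2\sum_{i=k}^{j-1}\beta_i\Big).
\end{equation*}
By the very choice of $\beta_i$ in \eqref{eq:beta-def} the inner sum telescopes:
$\sum_{i=k}^{j-1}\beta_i = 3\bigl((j+2)^{1/3}-(k+2)^{1/3}\bigr)$. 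Writing $a=(k+2)^{1/3}$, this yields the clean bound $\frac{\Gamma_j}{\Gamma_k}\le e^{6a}\,e^{-6(j+2)^{1/3}}$.

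\textbf{Step 2: integral comparison.} The function $x\mapsto e^{-6(x+2)^{1/3}}$ is decreasing, so by comparing each term with the integral over the unit interval immediately to its left,
\begin{equation*}
\sum_{j=k+1}^{K-1}\frac{\Gamma_j}{\Gamma_k}\;\le\; e^{6a}\sum_{j=k+1}^{\infty}e^{-6(j+2)^{1/3}} \;\le\; e^{6a}\int_{k}^{\infty}e^{-6(x+2)^{1/3}}\,dx \;=\; e^{6a}\int_{k+2}^{\infty}e^{-6u^{1/3}}\,du.
\end{equation*}

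\textbf{Step 3: evaluate the integral.} Substituting $v=u^{1/3}$, so $du=3v^2\,dv$, converts the tail integral into $3\int_{a}^{\infty}v^2 e^{-6v}\,dv$. Two integrations by parts (or the recursion $\int_a^\infty v^n e^{-6v}dv = \frac{a^n e^{-6a}}{6}+\frac{n}{6}\int_a^\infty v^{n-1}e^{-6v}dv$) give
\begin{equation*}
3\int_{a}^{\infty}v^2 e^{-6v}\,dv \;=\; e^{-6a}\!\left(\tfrac{a^2}{2}+\tfrac{a}{6}+\tfrac{1}{36}\right).
\end{equation*}
Multiplying by $e^{6a}$ cancels the exponential and, resubstituting $a=(k+2)^{1/3}$, yields exactly $\tfrac12(k+2)^{2/3}+\tfrac16(k+2)^{1/3}+\tfrac{1}{36}$, which is \eqref{eq:keylem1}.

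The main obstacle, such as it is, is getting the constants right in the integration-by-parts recursion: the $\frac{1}{2}$, $\frac{1}{6}$, $\frac{1}{36}$ in the statement are exactly the coefficients produced by $3\cdot\frac{1}{6}$, $3\cdot\frac{1}{18}$, and $3\cdot\frac{1}{108}$, so the algebra works out cleanly provided one chooses the decay rate $e^{-2\beta_i}$ (equivalently, constant $6$ in the exponent) to match the telescoping factor $3$ coming from the cube-root difference. Any weaker bound — e.g., $(1-\beta_i)^2\le (k+2)^{2/3}/(j+2)^{2/3}$ of the polynomial telescoping sort — fails because $\sum (j+2)^{-2/3}$ diverges, which is what forces the exponential majorization in Step~1.
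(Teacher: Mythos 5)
Your proposal is correct and follows essentially the same route as the paper: the bound $(1-\beta_i)^2\le e^{-2\beta_i}$, the telescoping of $\sum\beta_i$ into $3\big[(j+2)^{1/3}-(k+2)^{1/3}\big]$, and an integral comparison of $\sum e^{-6(j+2)^{1/3}}$ against $\int e^{-6u^{1/3}}du$, whose closed-form antiderivative produces exactly the coefficients $\tfrac12,\tfrac16,\tfrac1{36}$. The only cosmetic differences are that you extend the tail sum to infinity rather than keeping the finite upper limit $K+1$, and you derive the antiderivative via the substitution $v=u^{1/3}$ and integration by parts rather than stating it directly.
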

\begin{proof}	
	By the fact $a^3-b^3 = (a-b)(a^2+ab+b^2)$, we have 
	\begin{equation}\label{eq:beta-bound}
		\beta_{k}=3\big[(k+3)^{1/3}-(k+2)^{1/3}\big]=
		\frac{3}{(k+3)^{2/3}+(k+3)^{1/3}(k+2)^{1/3}+(k+2)^{2/3}}.
	\end{equation}
	Hence, $\beta_{k}\in \big[(k+3)^{-2/3},(k+2)^{-2/3}\big]$ for all $k\ge 0$, and it is a decreasing sequence. In addition,
by the definition of $\Gamma_k$ and $\beta_k$, it holds for all $j>k\ge 0$ that
\begin{equation}\label{eq:keystring}
	\frac{\Gamma_{j}}{\Gamma_{k}} 
	= \frac{\prod_{l=0}^{j-1}(1-\beta_l)^2}{\prod_{l=0}^{k-1}(1-\beta_l)^2}  = \prod_{l=k}^{j-1}(1-\beta_l)^2
	\le  e^{-2\sum_{l=k}^{j-1}\beta_l}
	=  e^{-6\big[(j+2)^{1/3}-(k+2)^{1/3}\big]},
\end{equation}	
where the inequality holds because $0\le 1+x\le e^x, \forall\, x\ge -1$. Therefore we have that for any $k\ge 0,$
\begin{equation}\label{eq:key-beta-ineq1}
\sum_{j=k+1}^{K-1}\frac{\Gamma_{j}}{\Gamma_{k}} 
\le 
 \sum_{j=k+1}^{K-1} e^{-6\big[(j+2)^{1/3}-(k+2)^{1/3}\big]}
= e^{6(k+2)^{1/3}}\sum_{j=k+1}^{K-1} e^{-6 (j+2)^{1/3} }.
\end{equation}
	Since $e^{-6x^{1/3}}$ is a decreasing function and has an anti-derivative $-\frac{1}{36} e^{-6x^{1/3}} (18x^{2/3}+6x^{1/3}+1)$, we have 
	\begin{equation}\label{eq:key-beta-ineq2}
\sum_{j=k+1}^{K-1} e^{-6 (j+2)^{1/3} }
\le \int_{k+2}^{K+1} e^{-6x^{1/3}}dx
\le \frac{1}{36}  e^{-6(k+2)^{1/3}} (18(k+2)^{2/3}+6(k+2)^{1/3}+1) .
	\end{equation}
	Substituting \eqref{eq:key-beta-ineq2} into \eqref{eq:key-beta-ineq1} gives \eqref{eq:keylem1} and completes the proof.
\end{proof}

Now we are ready to show the second convergence rate result with constant stepsizes.
\begin{theorem}[convergence rate II with constant stepsizes]\label{thm:rate-const2}
	Under Assumptions~\ref{assump-obj} through \ref{assump-sgd}, let $\{\vx^k\}$ be the iterate sequence from Algorithm~\ref{alg:prox-storm} with $ \eta_k =\frac{\eta}{L \sqrt[3]{K}}$ and $\{\beta_k\}$ set by \eqref{eq:beta-def}, 
	where $\eta \le \frac{1}{4}$ is a positive number. If $\tau$ is selected from $\{0,1,\ldots,K-1\}$ uniformly at random, then
	\begin{equation}\label{eq:rate-grad-const2}
		\EE[\|\bar\vg^\tau\|^2] \le \frac{1}{K^{\frac{2}{3}}} 
		\left(	\frac{12 L}{\eta}\big[\Phi(\vx^0)-\Phi^*\big] + \Big( 2^{-1/3}+\frac{1}{6}2^{1/3}+\frac{1}{36}\Big) \frac{8}{\sqrt[3]{K}} \frac{\sigma^2}{ m_0} +  \frac{12\sigma^2 L}{ \eta m_0}
		+ \frac{ 32}{(1-2^{-2/3})^2} \frac{\sigma^2}{m}\right).
	\end{equation}
\end{theorem}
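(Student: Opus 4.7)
The plan is to derive Theorem~\ref{thm:rate-const2} as a specialization of the generic Theorem~\ref{thm:main}, by verifying its two parameter hypotheses \eqref{eq:cond-eta-beta1}--\eqref{eq:cond-eta-beta2} for the present choice of $\eta_k$ and $\beta_k$ and computing explicit values of the constants $A$ and $B$. Because the stepsize is constant and $\tau$ is chosen uniformly, $\sum_{k=0}^{K-1}\eta_k\,\EE[\|\bar\vg^k\|^2] = K\eta_0\,\EE[\|\bar\vg^\tau\|^2] = (\eta K^{2/3}/L)\,\EE[\|\bar\vg^\tau\|^2]$, so once \eqref{eq:mainthm} is available, dividing through by $\eta K^{2/3}/L$ yields a bound of the form \eqref{eq:rate-grad-const2}; the remaining task is to express $A$ and $B$ in closed form.

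The first hypothesis \eqref{eq:cond-eta-beta1} is routine. With $\eta_k\equiv\eta/(L\sqrt[3]{K})$ its left-hand side becomes $2\eta/\sqrt[3]{K} + \frac{4\eta^2}{mK^{2/3}}\cdot\frac{1}{\Gamma_k}\sum_{j=k+1}^{K-1}\Gamma_j$, and Lemma~\ref{lem:keylem} bounds the trailing factor by $\tfrac{1}{2}(k+2)^{2/3}+\tfrac{1}{6}(k+2)^{1/3}+\tfrac{1}{36}$, which is $O(K^{2/3})$ uniformly in $k\le K-1$. Under $\eta\le 1/4$ and $m\ge 1$, both summands are at most $1/2$ and the condition holds.

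The identification of $A$ and $B$ is the crux. Applying Lemma~\ref{lem:keylem} at $k=0$ gives $\sum_{k=1}^{K-1}\eta_k\Gamma_k \le \frac{\eta}{L\sqrt[3]{K}}\bigl(2^{-1/3}+\tfrac{1}{6}2^{1/3}+\tfrac{1}{36}\bigr)$, so I would take $A$ equal to this right-hand side; the explicit $K^{-1/3}$ factor is exactly what produces the $1/\sqrt[3]{K}$ prefactor on the $\sigma^2/m_0$ term in \eqref{eq:rate-grad-const2}. For $B$ I would swap the order of summation to rewrite the double sum as $\frac{\eta}{L\sqrt[3]{K}}\sum_{j=0}^{K-2}\frac{\beta_j^2}{(1-\beta_j)^2}\cdot\frac{1}{\Gamma_j}\sum_{k=j+1}^{K-1}\Gamma_k$, apply Lemma~\ref{lem:keylem} to the inner sum, use $\beta_j\le(j+2)^{-2/3}$ (coming from \eqref{eq:beta-def} and the identity $a^3-b^3=(a-b)(a^2+ab+b^2)$), and use the uniform bound $(1-\beta_j)^{-2}\le(1-2^{-2/3})^{-2}$ that follows from the monotonicity $\beta_j\le\beta_0\le 2^{-2/3}$. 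The product $\beta_j^2\cdot\bigl[\tfrac{1}{2}(j+2)^{2/3}+\tfrac{1}{6}(j+2)^{1/3}+\tfrac{1}{36}\bigr]$ then splits into three series of orders $K^{1/3}$, $\log K$, and $O(1)$, whose leading $K^{1/3}$ exactly cancels the outer $K^{-1/3}$ and leaves a bound of the form $B=\frac{2\eta}{L(1-2^{-2/3})^2}$ once the subleading tails are absorbed.

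The main obstacle I anticipate is precisely this last bookkeeping: extracting the clean constant $\frac{2}{(1-2^{-2/3})^2}$ rather than a messier expression requires checking that the $\log K$ and $O(1)$ contributions do not spoil the estimate for moderate $K$, and slightly inflating the leading $K^{1/3}$ coefficient (from roughly $\tfrac{3}{2}$) to $2$ to obtain a uniform constant. Once $A$ and $B$ are fixed, plugging into \eqref{eq:mainthm} and multiplying both sides by $L/(\eta K^{2/3})$ produces the four terms on the right-hand side of \eqref{eq:rate-grad-const2} by inspection: the $12L(\Phi(\vx^0)-\Phi^*)/\eta$ summand from the initial suboptimality, the $12\sigma^2 L/(\eta m_0)$ summand from the constant $3$ inside $4(2A+3)/m_0$, the $(2^{-1/3}+\tfrac{1}{6}2^{1/3}+\tfrac{1}{36})\cdot 8\sigma^2/(m_0\sqrt[3]{K})$ summand from the $2A$ part, and the $32\sigma^2/(m(1-2^{-2/3})^2)$ summand from $16B/m$.
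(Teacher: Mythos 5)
Your proposal is correct and follows essentially the same route as the paper: verify \eqref{eq:cond-eta-beta1} via Lemma~\ref{lem:keylem}, take $A=\big(2^{-1/3}+\tfrac{1}{6}2^{1/3}+\tfrac{1}{36}\big)\tfrac{\eta}{L\sqrt[3]{K}}$ from the $k=0$ case of \eqref{eq:keylem1}, obtain $B=\tfrac{2\eta}{(1-2^{-2/3})^2L}$ by swapping the double sum and bounding the three resulting series (the paper absorbs the $\log K$ tail using $3x^{1/3}>\log x$), and then divide \eqref{eq:mainthm} by $\eta K^{2/3}/L$. All four terms of \eqref{eq:rate-grad-const2} emerge exactly as you describe.
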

\begin{proof}
We show the desired result by verifying the conditions in Theorem~\ref{thm:main}. First, with $ \eta_k =\frac{\eta}{L \sqrt[3]{K}}$, the condition in
 \eqref{eq:cond-eta-beta1} becomes 
$$\frac{2 \eta }{\sqrt[3]{K}} + \frac{4 }{m} \frac{\eta^2}{K^{2/3}}  \sum_{j=k+1}^{K-1}\frac{\Gamma_{j}}{\Gamma_{k} }  \le 1, \quad k=0,\ldots,K-1.$$
Notice that when $k=K-1$ the summation above is null. Hence by \eqref{eq:keylem1}, it suffices to require 
	$$\frac{2\eta }{\sqrt[3]{K}} + \frac{4 }{m} \frac{\eta^2 }{K^{2/3}}  \bigg(\frac{1}{2}K^{2/3}+\frac{1}{6}K^{1/3}+\frac{1}{36}\bigg)  \le 1, $$ which is guaranteed when $\eta\le\frac{1}{4}$ and $K\ge1$. Therefore, the condition in
 \eqref{eq:cond-eta-beta1} holds.
	
Secondly, by letting $k=0$ in \eqref{eq:keylem1} and recalling $\Gamma_0=1$, we have 
	$\sum_{k=1}^{K-1}\eta_k\Gamma_{k}\le \big(\frac{1}{2}2^{2/3}+\frac{1}{6}2^{1/3}+\frac{1}{36}\big) \frac{ \eta}{L\sqrt[3]{K}}.$
 Hence, the first condition in \eqref{eq:cond-eta-beta2} holds with
$A=\big(2^{-1/3}+\frac{1}{6}2^{1/3}+\frac{1}{36}\big) \frac{ \eta}{L\sqrt[3]{K}}.$	 
Finally, notice 
	\begin{align}\label{eq:last1}
		 \sum_{k=1}^{K-1}\eta_k\Gamma_{k}\sum_{j=0}^{k-1}\frac{\beta_j^2}{\Gamma_{j+1} } 
		=&~\sum_{j=0}^{K-2}\frac{\beta_j^2}{(1-\beta_j)^2 }  \sum_{k=j+1}^{K-1}\eta_k\frac{\Gamma_{k}}{\Gamma_{j}} \cr
		\le&~ \frac{ \eta}{L\sqrt[3]{K}} \sum_{j=0}^{K-2}\frac{\beta_j^2}{(1-\beta_0)^2 }  
		\bigg(\frac{1}{2}(j+2)^{2/3}+\frac{1}{6}(j+2)^{1/3}+\frac{1}{36}\bigg) \cr
		\le&~ \frac{ \eta}{(1-\beta_0)^2 L \sqrt[3]{K}} \sum_{j=0}^{K-2} 
		\bigg(\frac{1}{2}(j+2)^{-2/3}+\frac{1}{6}(j+2)^{-1}+\frac{1}{36} (j+2)^{-4/3}\bigg) \cr
		\le&~\frac{ \eta}{(1-\beta_0)^2 L \sqrt[3]{K}}   
		\bigg(\frac{3}{2}(K^{1/3}-1)+\frac{1}{6}\log K+\frac{1}{12} (1-K^{-1/3})\bigg) 	\cr
		\le&~ \frac{ 2\eta}{(1-2^{-2/3})^2 L}, 
	\end{align}
where the first inequality follows from \eqref{eq:keylem1}, the decreasing monotonicity of $\beta_k$, and the setting of $\eta_k$, the second inequality holds by $\beta_j \le (j+2)^{-2/3}$, and the last inequality is obtained by $\beta_0\le 2^{-2/3}$ and using the fact $3x^{1/3} > \log x, \forall\, x>0$.	
Thus the second condition in \eqref{eq:cond-eta-beta2} holds with $B = \frac{ 2\eta}{(1-2^{-2/3})^2 L}$. Therefore, \eqref{eq:rate-grad-const2} follows from \eqref{eq:mainthm} and the choice of $\tau$ by uniformly random selection. 
\end{proof}

From Theorem~\ref{thm:rate-const2}, we can immediately obtain the next complexity result of Algorithm~\ref{alg:prox-storm} with the constant stepsize.

\begin{corollary}[complexity result II with constant stepsizes]\label{cor:complexity-const}
 Let $\vareps>0$ be given. Under Assumptions~\ref{assump-obj} through \ref{assump-sgd}, let $\{\vx^k\}$ be the iterate sequence from Algorithm~\ref{alg:prox-storm} with $ \eta_k =\frac{\eta}{L \sqrt[3]{K}}$ and $\{\beta_k\}$ set by \eqref{eq:beta-def}, 
	where $\eta \le \frac{1}{4}$ is a positive number. 
Let $\tau$ be selected from $\{0,1,\ldots,K-1\}$ uniformly at random. Then $\vx^\tau$ is a stochastic $\vareps$-stationary solution of \eqref{eq:stoc-prob} if  
\begin{equation}\label{eq:K-const-2}
K =\left\lceil\frac{ \left(\frac{12 L}{\eta}\big[\Phi(\vx^0)-\Phi^*\big] + \big( 2^{-1/3}+\frac{1}{6}2^{1/3}+\frac{1}{36}\big) \frac{8\sigma^2}{ m_0} +  \frac{12\sigma^2 L}{ \eta m_0}
		+ \frac{ 32}{(1-2^{-2/3})^2} \frac{\sigma^2}{m} \right)^{3/2} }{\vareps^3}\right\rceil.
\end{equation} 
\end{corollary}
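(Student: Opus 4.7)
The plan is to derive this corollary as an essentially immediate consequence of Theorem~\ref{thm:rate-const2}. Under the stated assumptions and parameter choices, that theorem provides the bound
\[
\EE[\|\bar\vg^\tau\|^2] \le \frac{C}{K^{2/3}},
\]
where
\[
C = \frac{12 L}{\eta}\big[\Phi(\vx^0)-\Phi^*\big] + \Big( 2^{-1/3}+\tfrac{1}{6}2^{1/3}+\tfrac{1}{36}\Big) \frac{8}{\sqrt[3]{K}} \frac{\sigma^2}{ m_0} +  \frac{12\sigma^2 L}{ \eta m_0}
		+ \frac{ 32}{(1-2^{-2/3})^2} \frac{\sigma^2}{m}.
\]
The only subtlety is that $C$ itself contains a $K^{-1/3}$ factor in one of the $\sigma^2/m_0$ terms; I would upper-bound that factor by $1$ (valid since $K\ge 1$) to obtain a purely $K$-independent constant $\bar C$ dominating $C$, namely
\[
\bar C := \frac{12 L}{\eta}\big[\Phi(\vx^0)-\Phi^*\big] + \Big( 2^{-1/3}+\tfrac{1}{6}2^{1/3}+\tfrac{1}{36}\Big) \frac{8\sigma^2}{ m_0} +  \frac{12\sigma^2 L}{ \eta m_0}
		+ \frac{ 32}{(1-2^{-2/3})^2} \frac{\sigma^2}{m}.
\]

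With this in hand, the condition $\EE[\|\bar\vg^\tau\|^2]\le\vareps^2$ is implied by $\bar C/K^{2/3}\le \vareps^2$, i.e., $K \ge \bar C^{3/2}/\vareps^3$. Taking the ceiling of $\bar C^{3/2}/\vareps^3$ is precisely the choice in \eqref{eq:K-const-2}, so Definition~\ref{def:eps-sol} is satisfied for $\vx^\tau$ with $\eta = \eta_0 = \frac{\eta}{L\sqrt[3]{K}}$.

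There is no real obstacle here; the argument is a routine rearrangement of the rate bound from Theorem~\ref{thm:rate-const2} to solve for the number of iterations. The only care required is handling the residual $K^{-1/3}$ term inside the bound (bounding it by $1$ to obtain a clean closed-form $K$), and noting that the total sample complexity is $m_0 + m(K-1)$, although the corollary statement itself only pins down $K$.
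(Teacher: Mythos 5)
Your proposal is correct and matches the paper's (implicit) argument: the paper states this corollary as an immediate consequence of Theorem~\ref{thm:rate-const2} without a written proof, and the only step needed is exactly the one you identify, namely bounding the residual $K^{-1/3}$ factor in the $\sigma^2/m_0$ term by $1$ so that the constant becomes $K$-independent and one can solve $\bar C/K^{2/3}\le\vareps^2$ for $K$.
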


\begin{remark}\label{rm:dep-sigma-const-2}
Let $m_0=O(1)$ and $m=O(1)$. Then we have from \eqref{eq:K-const-2} that $K= O(\vareps^{-3})$ by ignoring the dependence on other quantities and the total sample complexity is $m_0+m(K-1) = O(\vareps^{-3})$, which matches with the lower bound in \cite{arjevani2019lower}. However, as we need $\eta \le \frac{1}{4}$, the dependence on $L\big(\Phi(\vx^0)-\Phi^*\big)$ will be not as good as the result in \eqref{eq:dep-sigma-const}.
\end{remark}

\section{Numerical Experiments}\label{sec:numerical}
In this section, we test Algorithm~\ref{alg:prox-storm}, named as PStorm, on solving three problems. The first problem is the nonnegative principal component analysis (NPCA) \cite{reddi2016proximal}, and the other two are on training neural networks. We compare PStorm to the vanilla proximal SGD, Spiderboost \cite{wang2019spiderboost}, and Hybrid-SGD \cite{tran2021hybrid}. Spiderboost and Hybrid-SGD both achieve optimal complexity results, and the vanilla proximal SGD is used as a baseline for the comparison. For NPCA, all methods were implemented in MATLAB 2021a on a quad-core iMAC with 40 GB memory, and for training neural networks, all methods were implemented by using PyTorch on a Dell workstation with 32 CPU cores, 2 GPUs, and 64 GB memory.  

\subsection{Nonnegative Principal Component Analysis (NPCA)}\label{sec:npca}
In this subsection, we compare the four methods on solving the NPCA problem:
\begin{equation}\label{eq:npca}
\Max_{\vx\in\RR^n} \frac{1}{2}\EE_\vz[\vx^\top(\vz\vz^\top)\vx], \st \|\vx\|\le 1, \vx\ge\vzero,
\end{equation}
where $\vz\in\RR^n$ represents a random data point following a certain distribution, and $\EE_\vz$ takes expectation about $\vz$. The problem \eqref{eq:npca} can be formulated into the form of \eqref{eq:stoc-prob}, by negating the objective and adding an indicator function of the constraint. Two datasets were used in this test. The first one takes $\vz=\frac{\vw}{\|\vw\|}$ where $\vw\sim\cN(\vone, \vI)$, and we solved a stochastic problem; for the second one, we used the normalized training and testing datasets of \verb|realsim| from LIBSVM \cite{chang2011libsvm}, and we solved a deterministic finite-sum problem. For both datasets, each sample function in the objective of \eqref{eq:npca} is 1-smooth, and thus we used the Lipschitz constant $L=1$ for all methods.

\noindent\textbf{Random dataset:}~~For the randomly generated dataset, we set the dimension $n=100$ and the minibatch size to $m=10$ for PStorm, the vanilla proximal SGD, and the Hybrid-SGD. For the Spiderboost, we set $\vareps=5\times10^{-3}$, and for each iteration $k$, it accessed $q=\vareps^{-1}$ data samples if $\text{mod}(k,q)\neq 0$ and $\vareps^{-2}$ data samples otherwise.  Each method could access at most $10^6$ data samples. The stepsize of PStorm was set according to \eqref{eq:dynamic-para} with $\eta$ tuned from $\{0.1,0.2,0.5,1\}$, out of which $\eta=0.1$ turned out the best. The stepsize of the vanilla proximal SGD was set to $\frac{\eta}{\sqrt{k+1}}$ for each iteration $k\ge0$ with $\eta$ tuned from $\{0.1,0.2,0.5,1\}$, out of which $\eta=0.5$ turned out the best. The stepsize of Spiderboost was set to $\eta=0.5$. The Hybrid-SGD has a few more parameters to tune. As suggested by  \cite[Theorem 4]{tran2021hybrid} and also its numerical experiments, we set $\gamma_k$, $\beta_k$, $\eta_k$, and the initial batch size to
\begin{equation}\label{eq:para-rand-set}
\gamma_k\equiv\gamma = \frac{3c_0 m^{\frac{3}{4}}}{\sqrt{13}m_0 (K+1)^{\frac{1}{4}}},\ \beta_k\equiv\beta=1-\frac{\sqrt{m}}{\sqrt{m_0 K}}, \ \eta_k\equiv\eta = \frac{2}{L(3+\gamma)},\ m_0=\frac{c_1^2}{\lceil m (K+1)^{\frac{1}{3}}\rceil},
\end{equation}
where $K$ is the maximum number of iterations. We tuned $c_0$ to 10 and $c_1$ to 5. 

To evaluate the performance of the tested methods, we randomly generated $10^7$ data samples following the same distribution as we described above, and at the iterates of the methods, we computed their violation of stationarity of the sample-approximation problem. Since the compared methods have different learning rate, to make a fair comparison, we measured the \emph{violation of stationarity} at $\vx$ by $\|P(\vx, \nabla F, 1)\|$, where $P$ is the proximal mapping defined in Definition~\ref{def:prox-map}, and $F$ is the sample-approximated objective. Also, to obtain the ``optimal'' objective value, we ran the projected gradient method to 1,000 iterations on the deterministic sample-approximation problem. The results in terms of the number of samples are plotted in Figure~\ref{fig:snpca}, which clearly shows the superiority of PStorm over all the other three methods.

\begin{figure}[h]
\begin{center}
\begin{tabular}{cc}
\includegraphics[width=0.3\textwidth]{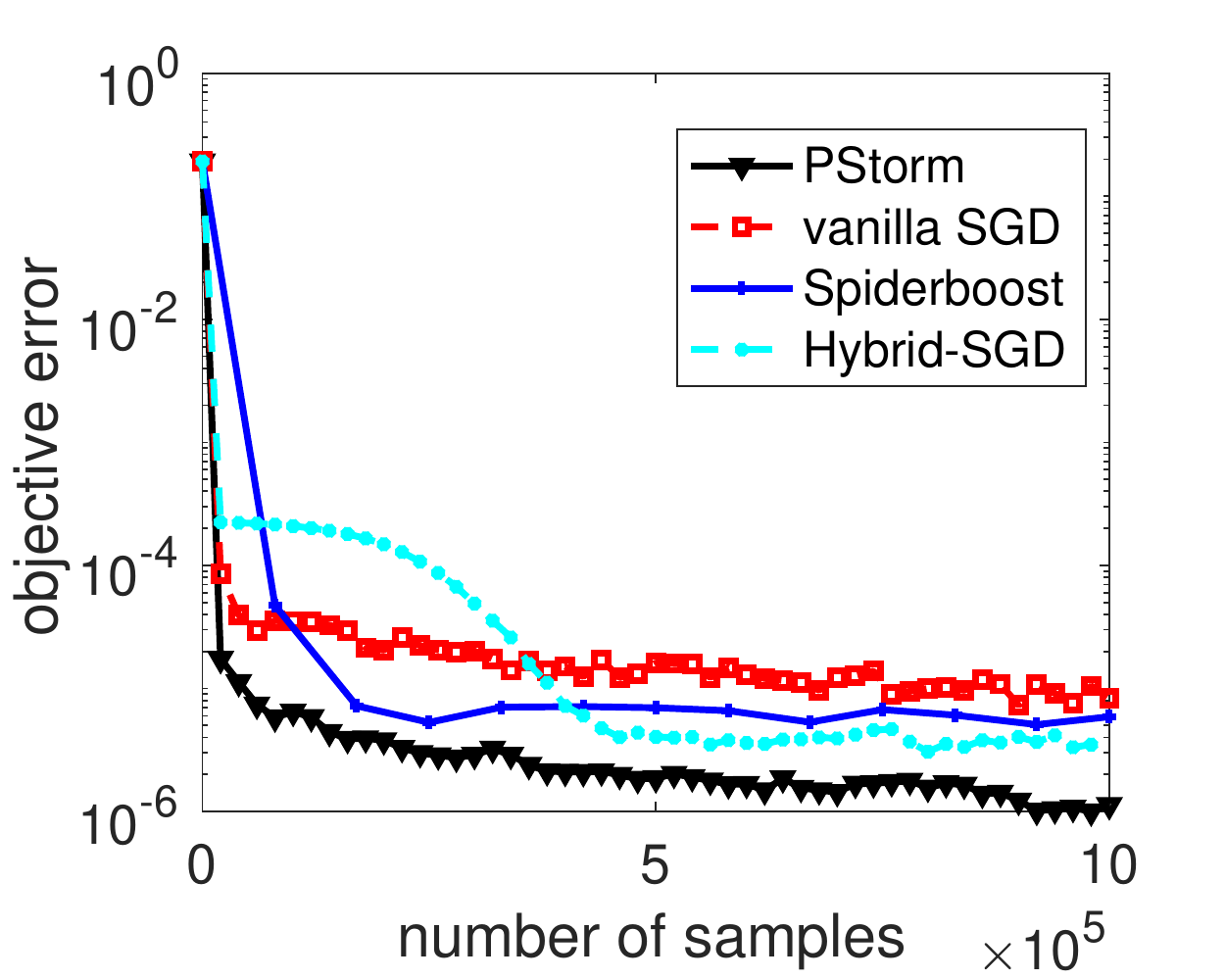} &
\includegraphics[width=0.3\textwidth]{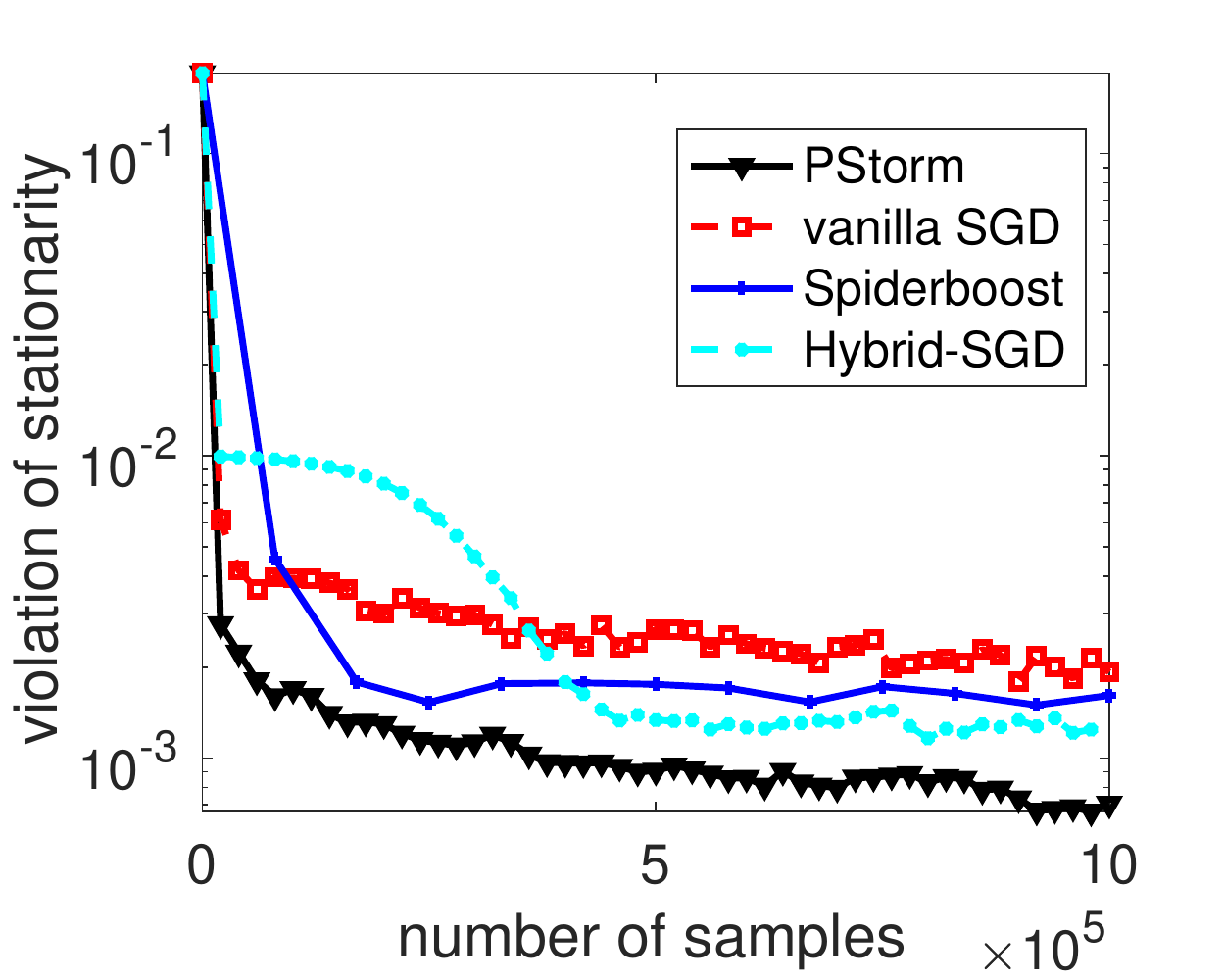}
\end{tabular}
\end{center}
\caption{Objective error and the violation of stationarity by PStorm, the vanilla SGD, Spiderboost, and Hybrid-SGD on solving \eqref{eq:npca} with randomly generated dataset.}\label{fig:snpca}
\end{figure}

\noindent\textbf{realsim dataset:}~~The \verb|realsim| dataset has $N=72,309$ samples in total. We set the minibatch size to $m=64$ for PStorm, the vanilla proximal SGD, and the Hybrid-SGD. For each iteration $k$ of the Spiderboost, we set $|B_k| = q=\lceil \sqrt{N} \rceil =269$ in \eqref{eq:vk-spider}, as suggested by \cite[Theorem 3]{wang2019spiderboost}. The stepsizes of PStorm and the vanilla proximal SGD were tuned in the same way as above, and the best $\eta$ was 0.2 for the former and 0.5 for the latter. The stepsize for Spiderboost was still set to $0.5$ as the smoothness constant is $L=1$. For Hybrid-SGD, we set its parameters to
$$\gamma_k\equiv\gamma = 0.95,\ \beta_k\equiv\beta=1-\frac{\sqrt{m}}{\sqrt{m_0 K}}, \ \eta_k\equiv\eta = \frac{2}{L(3+\gamma)},\ m_0=\max\left\{N, \frac{c_1^2}{\lceil m (K+1)^{\frac{1}{3}}\rceil}\right\},$$
where $K$ is the maximum number of iterations and $c_1$ was tuned to 15. Notice that different from \eqref{eq:para-rand-set}, here we simply fix $\gamma=0.95$. This choice of $\gamma$ was also adopted in \cite{tran2021hybrid}, and it turned out that this setting resulted in the best performance of Hybrid-SGD for this test.

We ran each method to 100 epochs, where one epoch is equivalent to one pass of all data samples. The results in terms of epoch number are shown in Figure~\ref{fig:npca}, where the violation of stationary was again measured by $\|P(\vx, \nabla F, 1)\|$ and the ``optimal'' objective value was given by running the projected gradient method to 1,000 iterations. For this test, we found that Spiderboost converges extremely fast and gave much smaller errors than those by other methods, and thus we plot the results by Spiderboost in separate figures. PStorm performed better than the vanilla proximal SGD and the Hybrid-SGD. We also tested the methods on the datasets \verb|w8a| and \verb|gisette| from LIBSVM. Their comparison performance was similar to that on \verb|realsim|.

\begin{figure}[h]
\begin{center}
\includegraphics[width=0.24\textwidth]{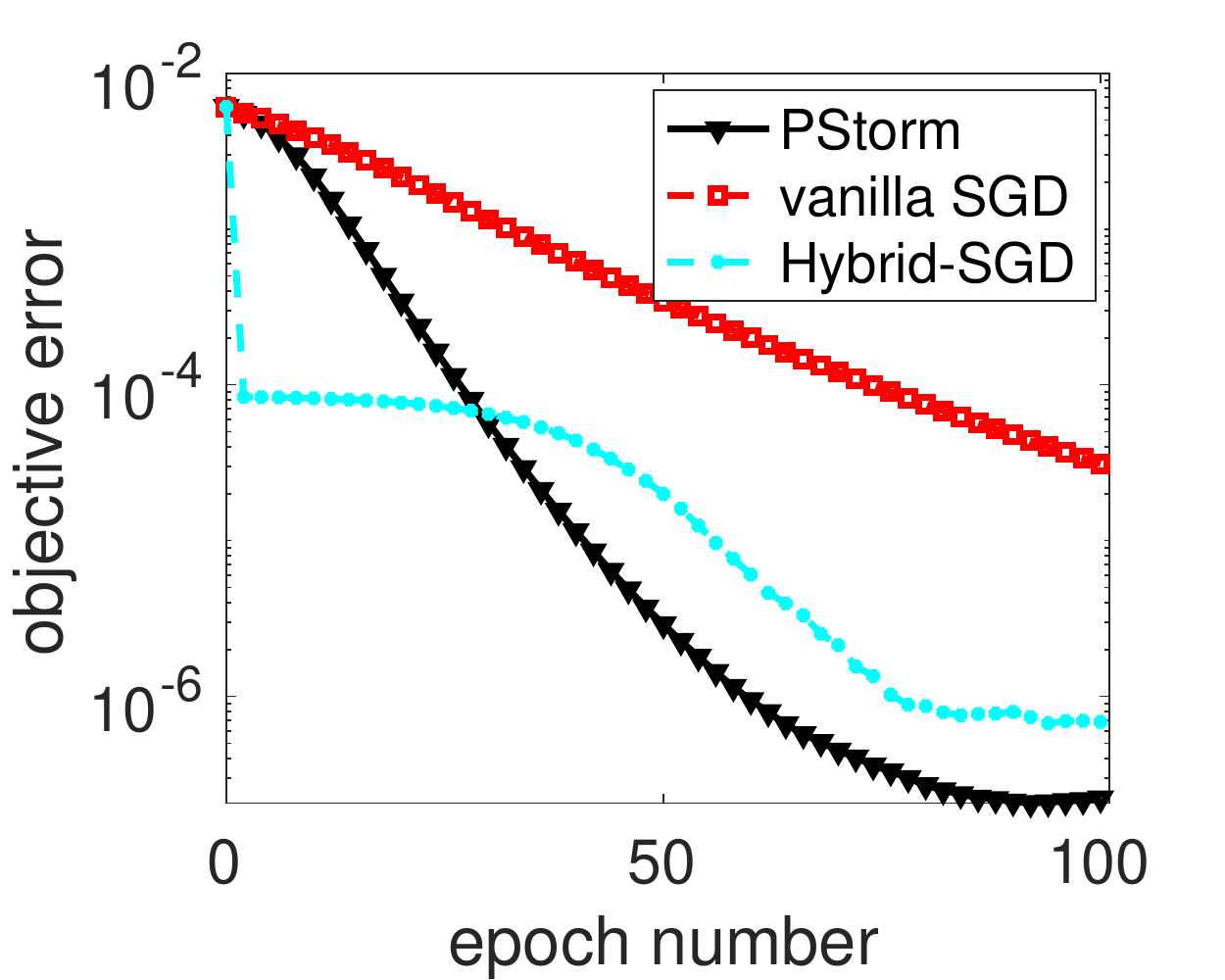} 
\includegraphics[width=0.24\textwidth]{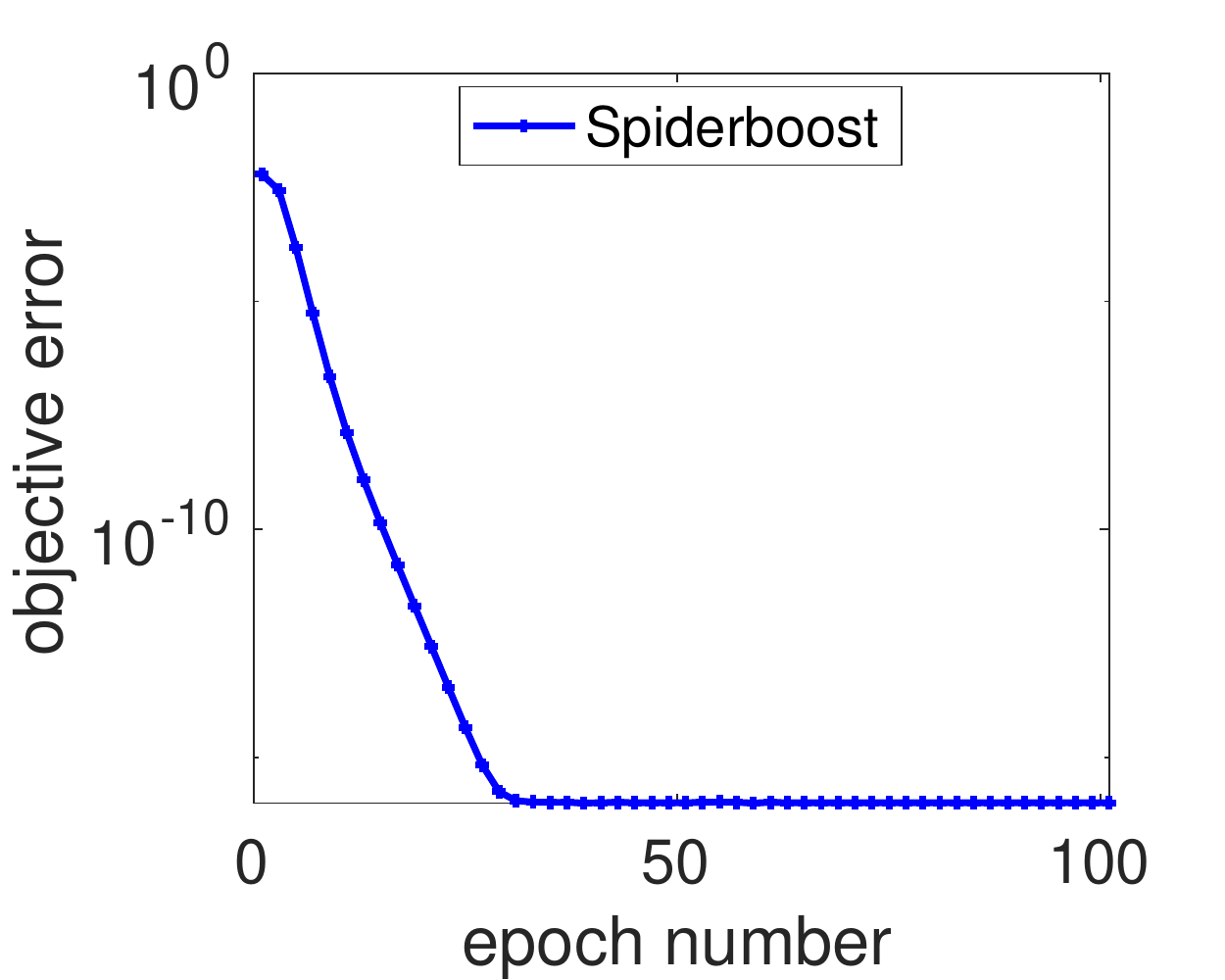} 
\includegraphics[width=0.24\textwidth]{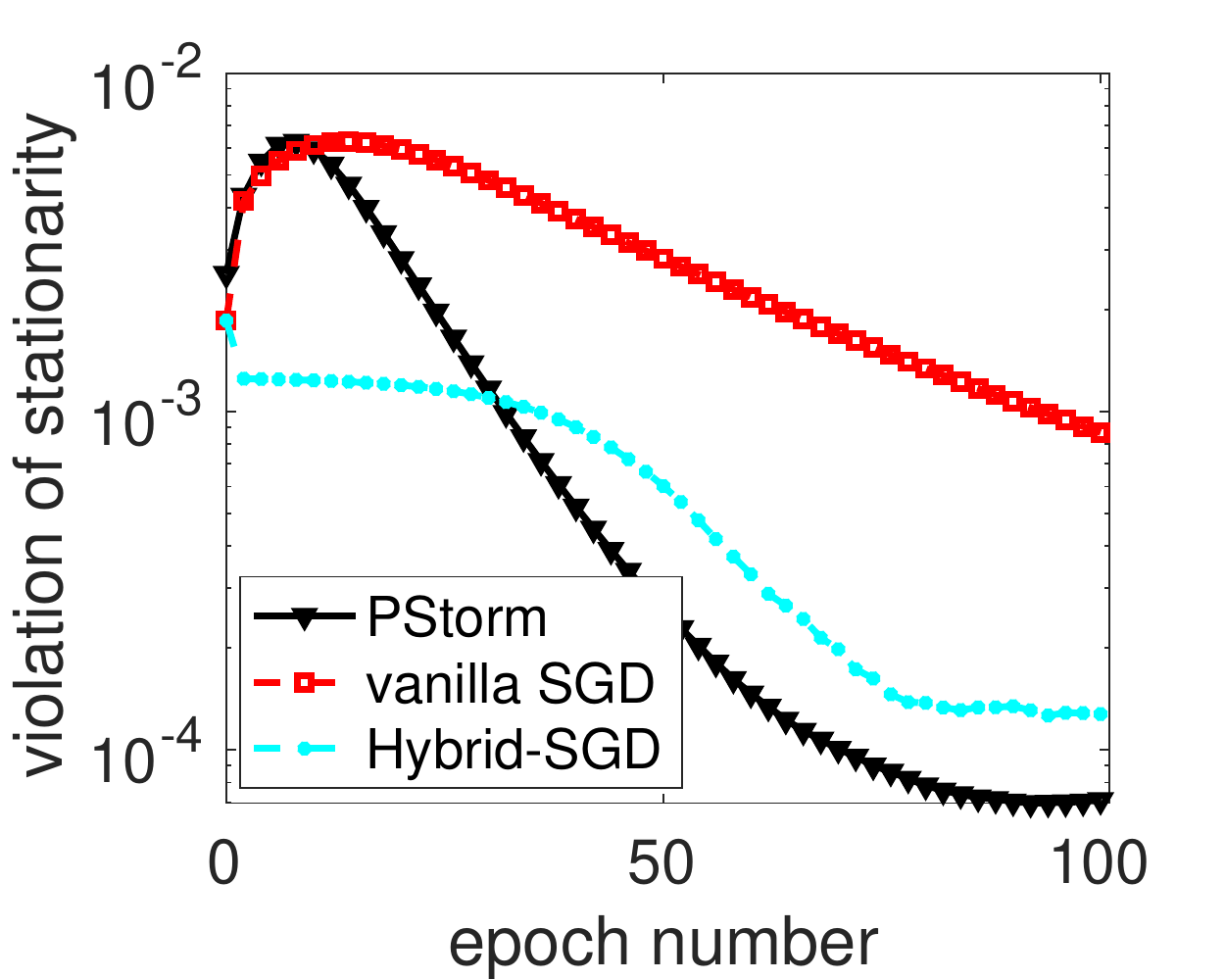} 
\includegraphics[width=0.24\textwidth]{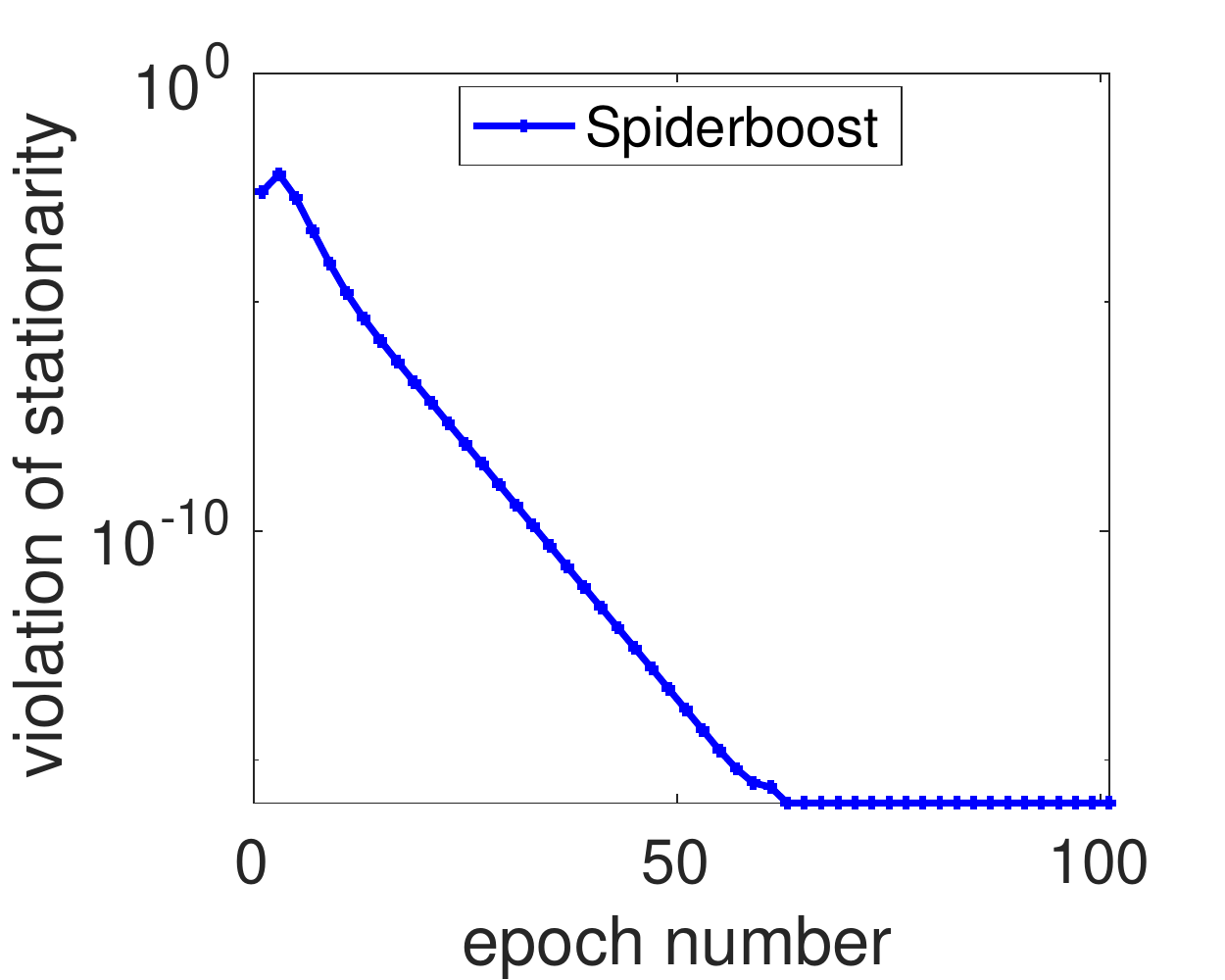} 
\end{center}
\caption{Objective error and the violation of stationarity by PStorm, the vanilla SGD, Spiderboost, and Hybrid-SGD on solving \eqref{eq:npca} with realsim dataset.}\label{fig:npca}
\end{figure}

\subsection{Regularized Feedforward Fully-connected Neural Network}\label{sec:fnn}
In this subsection, we compare different methods on solving an $\ell_1$-regularized 3-layer feedforward fully-connected neural network, formulated as
\begin{equation}\label{eq:sparseDNN}
\min_{\vtheta} \frac{1}{N}\sum_{i=1}^N \ell\Big(\mathrm{softmax}\big(\vW_3\sigma (\vW_2\sigma(\vW_1\vx_i))\big), y_i\Big) + \lambda\big(\|\vW_1\|_1+\|\vW_2\|_1+\|\vW_3\|_1\big).
\end{equation}
Here $\{(\vx_i,y_i)\}_{i=1}^N$ is a $c$-class training data set with $y_i\in \{1,\ldots,c\}$ for each $i$, $\vtheta:=(\vW_1,\vW_2,\vW_3)$ contains the parameters of the neural network, $\sigma(\cdot)$ is an activation function, $\ell$ denotes a loss function, $\mathrm{softmax}(\vz):= \frac{1}{\sum_{j=1}^c e^{z_j}}[e^{z_1}; \ldots; e^{z_c} ]\in\RR^c, \forall\, \vz\in\RR^c$, and $\lambda\ge0$ is a regularization parameter to trade off the loss and sparsity.

In the test, we used the MNIST dataset \cite{lecun1998gradient} of hand-written-digit images. The training set has 60,000 images, and the testing set has 10,000 images. Each image was originally $28\times 28$ and vectorized into a vector of dimension $784$. We set $\vW_1\in\RR^{784\times 120}, \vW_2\in\RR^{120\times 84}$, and $\vW_3\in\RR^{84\times 10}$, whose initial values were set to the default ones in \verb|libtorch|, a C++ distribution of PyTorch. We used the hyperbolic tangent activation function $\sigma(x)=\frac{e^x - e^{-x}}{e^x + e^{-x}}$ and the cross entropy $\ell(\vq, y_i)=-\log q_{y_i}$ for any distribution $\vq\in\RR^c$. 

The parameters of PStorm were set according to \eqref{eq:dynamic-para} with  $L=1$ and $\eta=\frac{\sqrt[3]{4}}{8}\approx 0.198$. Notice that the gradient of the loss function in \eqref{eq:sparseDNN} is not uniformly Lipschitz continuous, and its Lipschitz constant depends on $\vtheta$. More specifically, the gradient is Lipschitz continuous over any bounded set of $\vtheta$. However, PStorm with this parameter setting performed well. The learning rate of the vanilla SGD was set to $\eta_k = \frac{\eta}{\sqrt{k+1}}, \forall\, k\ge0$ with $\eta=\frac{\sqrt[3]{4}}{8}$. We also tried $\eta=0.5$, and it turned out that the performance of the vanilla SGD was not as well as that with $\eta=\frac{\sqrt[3]{4}}{8}$ when $\lambda>0$ in \eqref{eq:sparseDNN}. For Spiderboost, we set $q =\lceil \sqrt{60000}\rceil =245$ in \eqref{eq:vk-spider} as specified by \cite[Theorem 2]{wang2019spiderboost} and its learning rate $\eta=0.02$ in \eqref{eq:spiderboost}. We also tried $\eta=0.1$ and $\eta=0.01$. It turned out that Spiderboost could diverge  with $\eta=0.1$ and converged too slowly with $\eta=0.01$. For Hybrid-SGD, we fixed its parameter $\gamma=0.95$ as suggested in the numerical experiments of \cite{tran2021hybrid}, and we set $\beta_k=\beta=1-\frac{1}{\sqrt{K+1}},\forall k\ge0$ in \eqref{eq:hyb-sgd-v}, where $K$ is the maximum number of iterations. Its learning rate was set to $\eta=\frac{2}{4+L\gamma}$. Then we chose the initial mini-batch size $m_0$ from $\{256, 2560, 30000, 60000\}$ and $L$ from $\{5, 10, 50, 100\}$. The best results were reported. 

We ran each method to 100 epochs. Mini-batch size was set to 32 for PStorm, the vanilla SGD, and Hybrid-SGD. 
Again, to make a fair comparison, we measured the \emph{violation of stationarity} at $\vtheta$ by $\|P(\vtheta, \nabla F, 1)\|$, where $P$ is the proximal mapping defined in Definition~\ref{def:prox-map}, and $F$ is the smooth term in the objective of \eqref{eq:sparseDNN}. Table~\ref{tbl: sp-DNN-32} and Figure~\ref{fig:sp-DNN-32} show the results by the compared methods. Each result in the table is the average of those at the last five epochs. For Hybrid-SGD, the best results were obtained with $(m_0, L) = (60000, 50)$ when $\lambda=0$ and with $(m_0, L) = (60000, 100)$ when $\lambda>0$. From the results, we see that PStorm and Hybrid-SGD give similar training loss and testing accuracies while the vanilla SGD and Spiderboost yield higher loss and lower accuracies. The lower accuracies by Spiderboost may be caused by its larger batch size that is required in \cite{wang2019spiderboost}, and the lower accuracies by the vanilla SGD are because of its slower convergence. In addition, PStorm produced sparser solutions than those by other methods in all regularized cases. In terms of the violation of stationarity, the solutions by PStorm have better quality than those by other methods. Furthermore, we notice that the model \eqref{eq:sparseDNN} trained by PStorm with $\lambda = 5\times 10^{-4}$ is much sparser than that without the $\ell_1$ regularizer, but the sparse model gives just slightly lower testing accuracy than the dense one. This is important because a sparser model would reduce the inference time when the model is deployed to predict new data.

\begin{table}[h]
\begin{center}
\resizebox{0.99\textwidth}{!}{
\begin{tabular}{|c||cccc||cccc||cccc||cccc|}
\hline
Method & \multicolumn{4}{|c||}{PStorm} & \multicolumn{4}{|c||}{vanilla SGD} & \multicolumn{4}{|c||}{Spiderboost} & \multicolumn{4}{|c|}{Hybrid-SGD}\\\hline\hline
$\lambda$ &  train & test & grad & density & train & test & grad & density & train & test & grad & density & train & test & grad & density \\\hline
0.00 & 3.61e-3 & \textbf{98.01} & \textbf{3.45e-3} & 100 & 6.91e-2 & 97.09 & 3.42e-2 & 100 & 4.24e-2 & 97.41 & 1.57e-2 & 100& \textbf{1.50e-3} & 97.11 & 3.64e-3 & 100 \\
2e-4 & 4.38e-2 & 97.60 & \textbf{1.60e-2} & \textbf{14.06} & 1.08e-1 & 96.62 & 5.77e-2 & 99.47 & 8.70e-2 & 97.24 & 1.87e-2 & 27.17& \textbf{4.08e-2} & \textbf{97.78} & 9.53e-2 & 28.29\\
5e-4 & 8.86e-2 & \textbf{97.12} & \textbf{1.94e-2} & \textbf{6.16} & 1.69e-1 & 95.54 & 5.96e-2 & 92.86 & 1.41e-1 & 96.16 & 2.18e-2 & 10.62& \textbf{8.34e-2} & \textbf{97.12} & 1.11e-1 & 12.69 \\\hline
\end{tabular}
}
\end{center}
\caption{Results by the proposed method PStorm, the vanilla SGD, Hybrid-SGD, and Spiderboost on training the model \eqref{eq:sparseDNN}. The first three methods use mini-batch $m = 32$. Each method runs to 100 epochs. ``train'' is for training loss; ``test'' is for testing accuracy;  ``grad'' is for the violation of stationarity; ``density'' is for the percentage of nonzeros in the solution. The best results for ``test'', ``grad'', and ``density'' are highlighted in \textbf{bold}.}\label{tbl: sp-DNN-32}
\end{table}

\begin{figure}[h]
\begin{center}
\begin{tabular}{ccc}
$\lambda = 0$ & $\lambda = 2\times 10^{-4}$ & $\lambda = 5\times 10^{-4}$ \\
\includegraphics[width=0.25\textwidth]{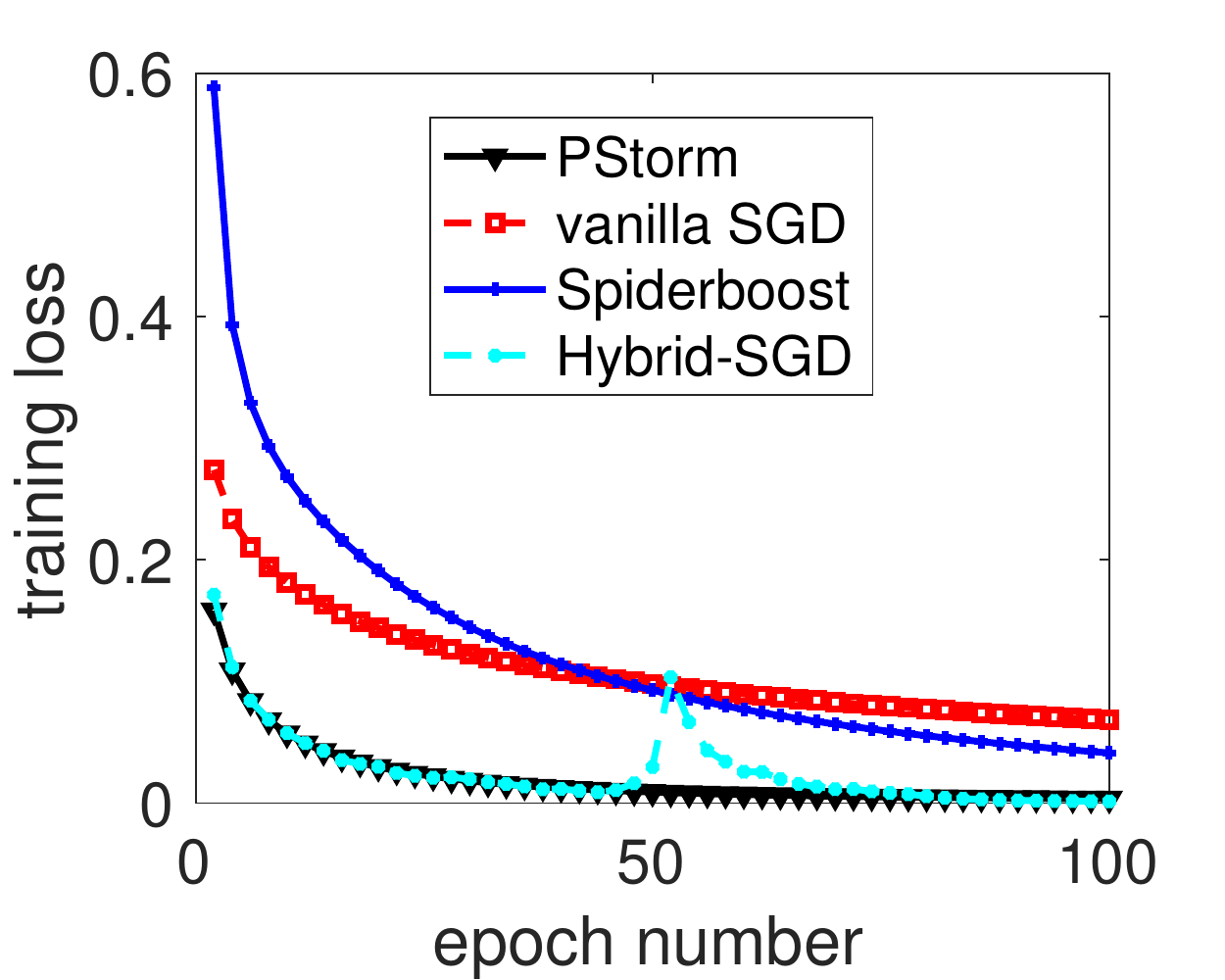} &
\includegraphics[width=0.25\textwidth]{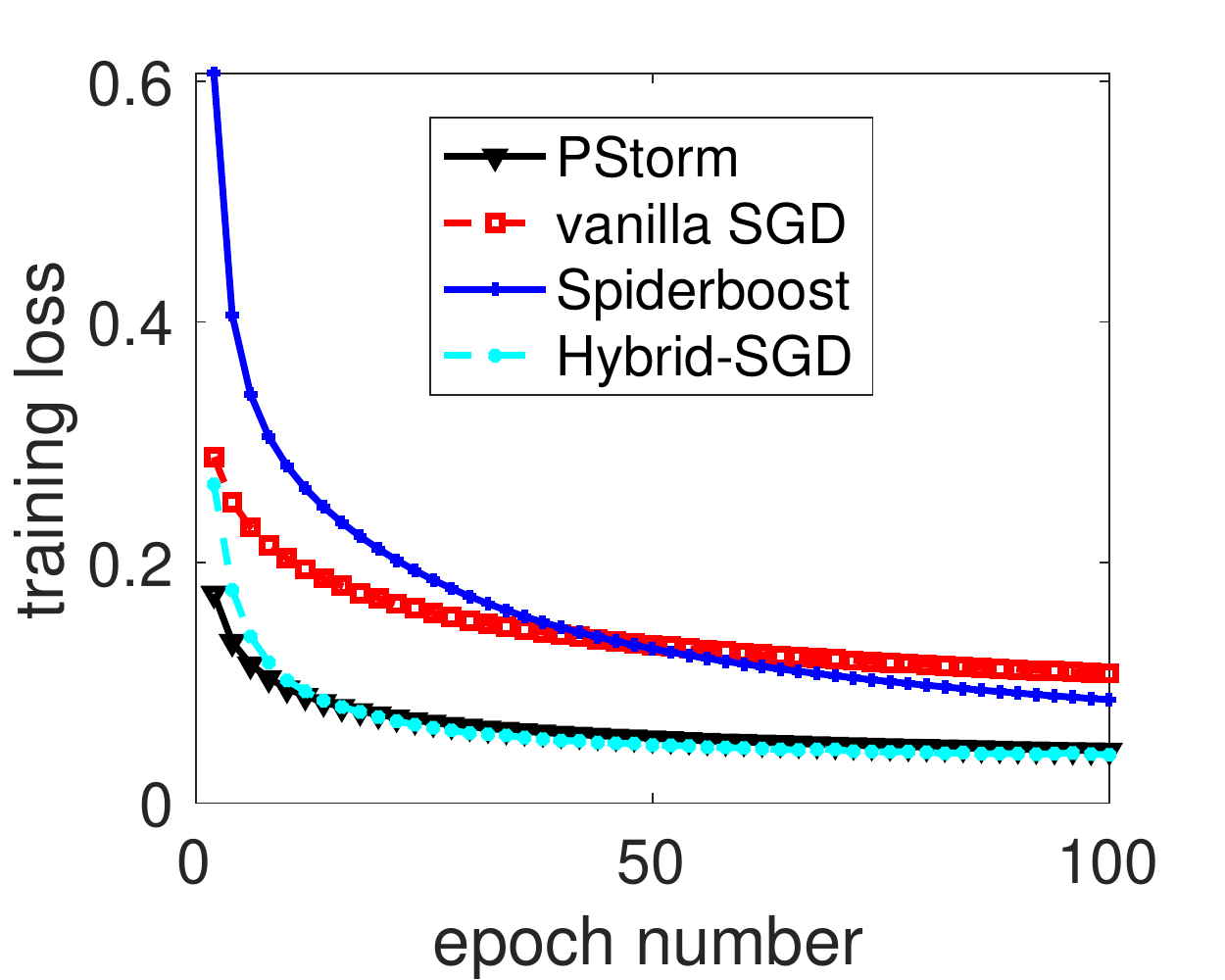} &
\includegraphics[width=0.25\textwidth]{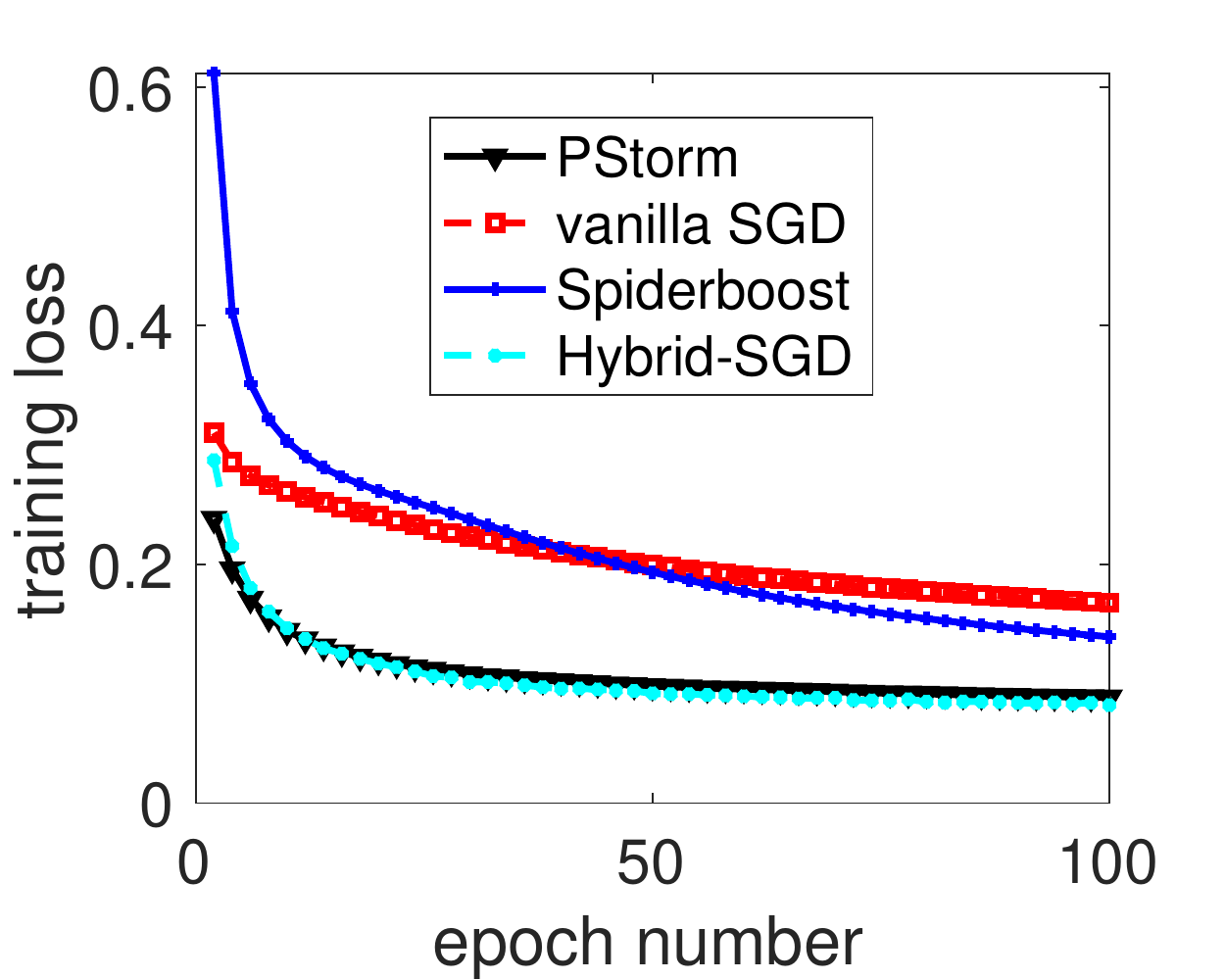} \\
\includegraphics[width=0.25\textwidth]{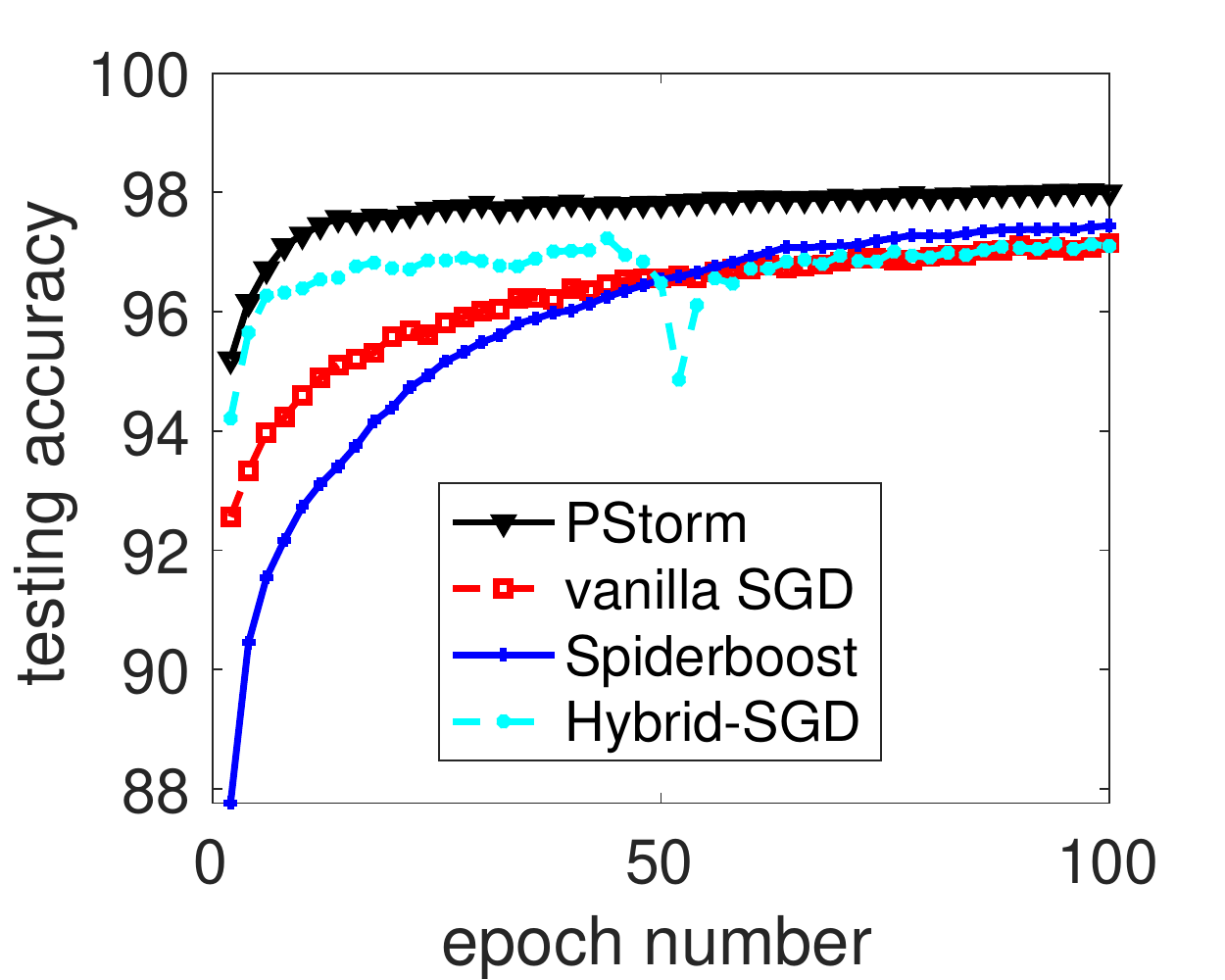} &
\includegraphics[width=0.25\textwidth]{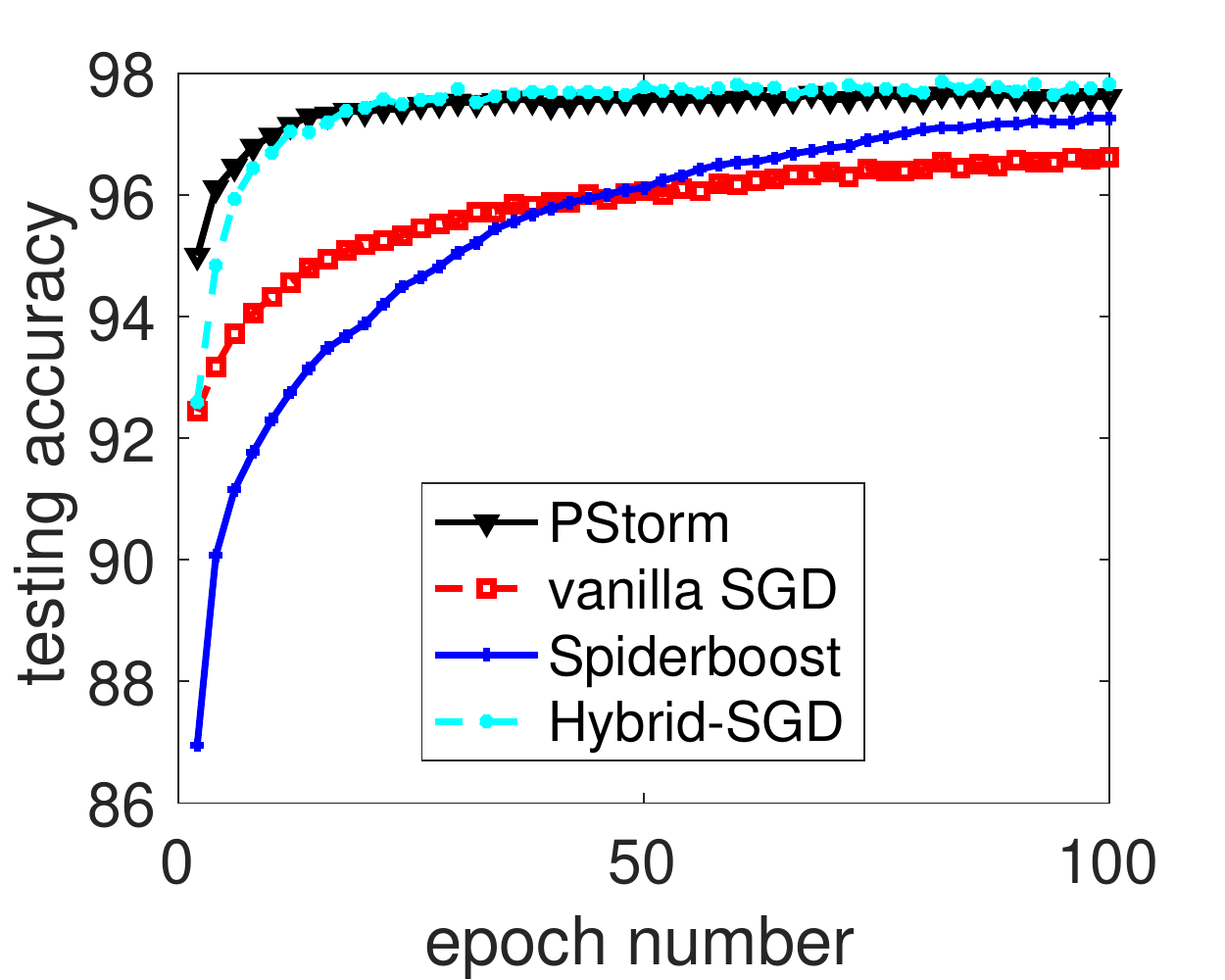} &
\includegraphics[width=0.25\textwidth]{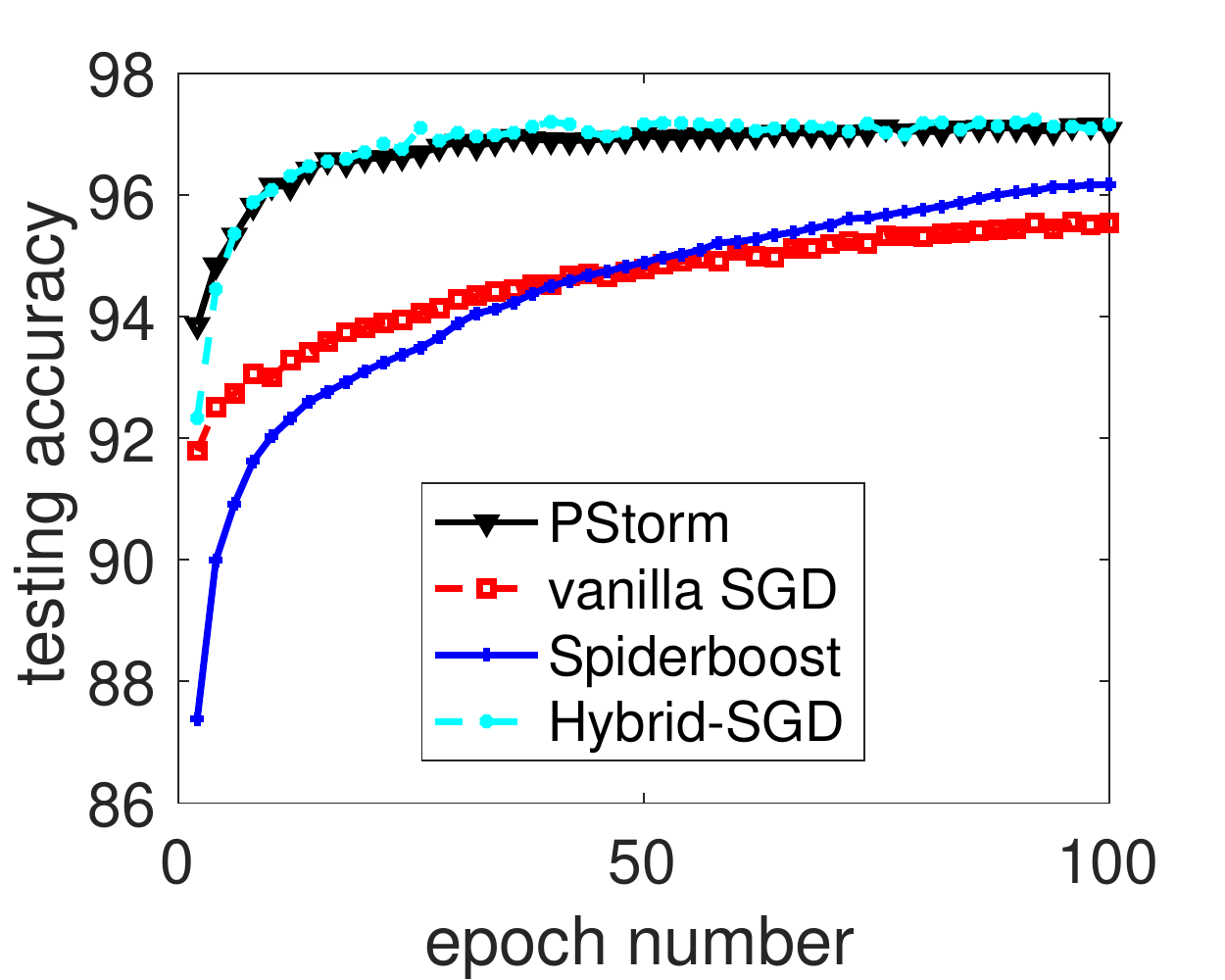} \\
\includegraphics[width=0.25\textwidth]{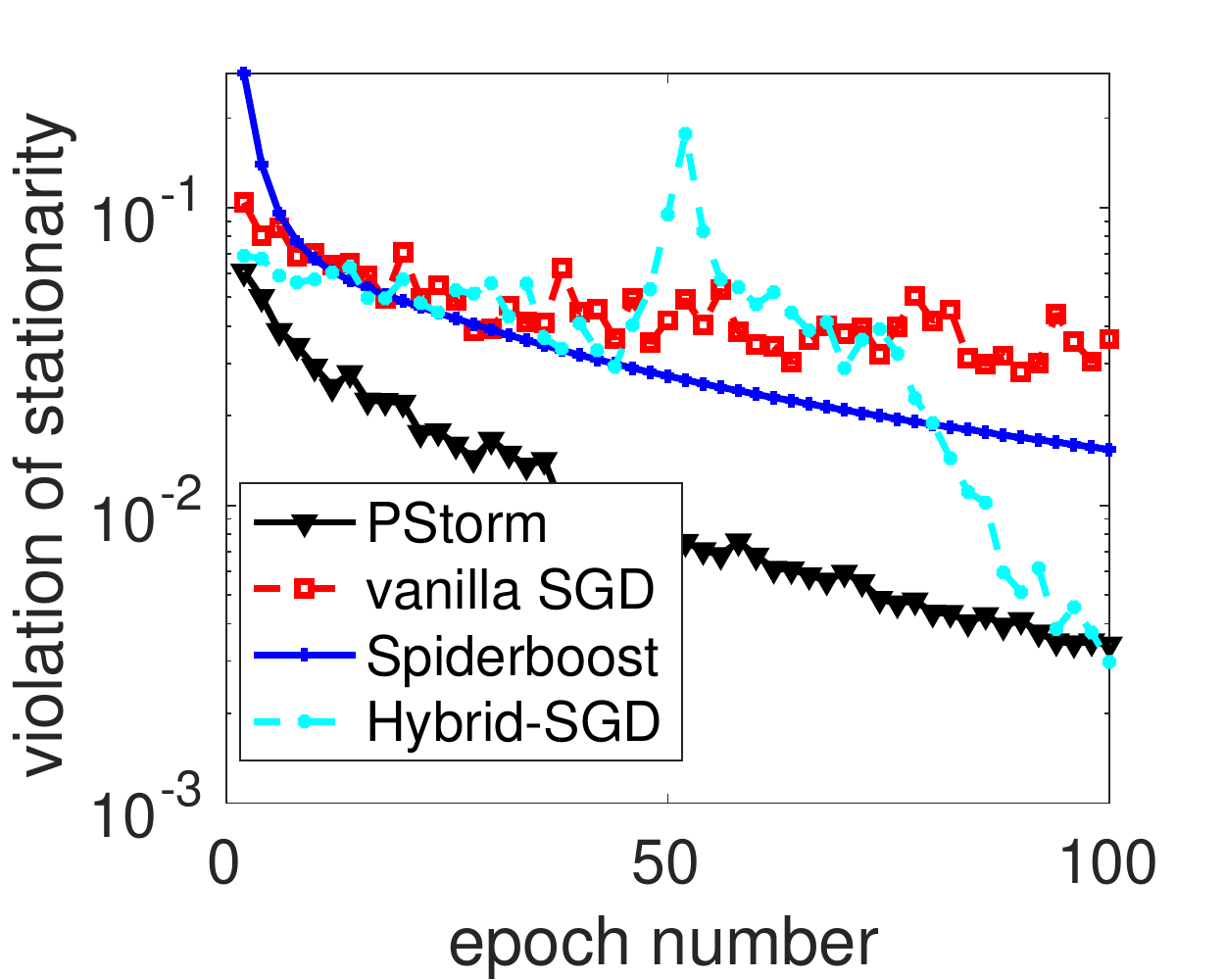} &
\includegraphics[width=0.25\textwidth]{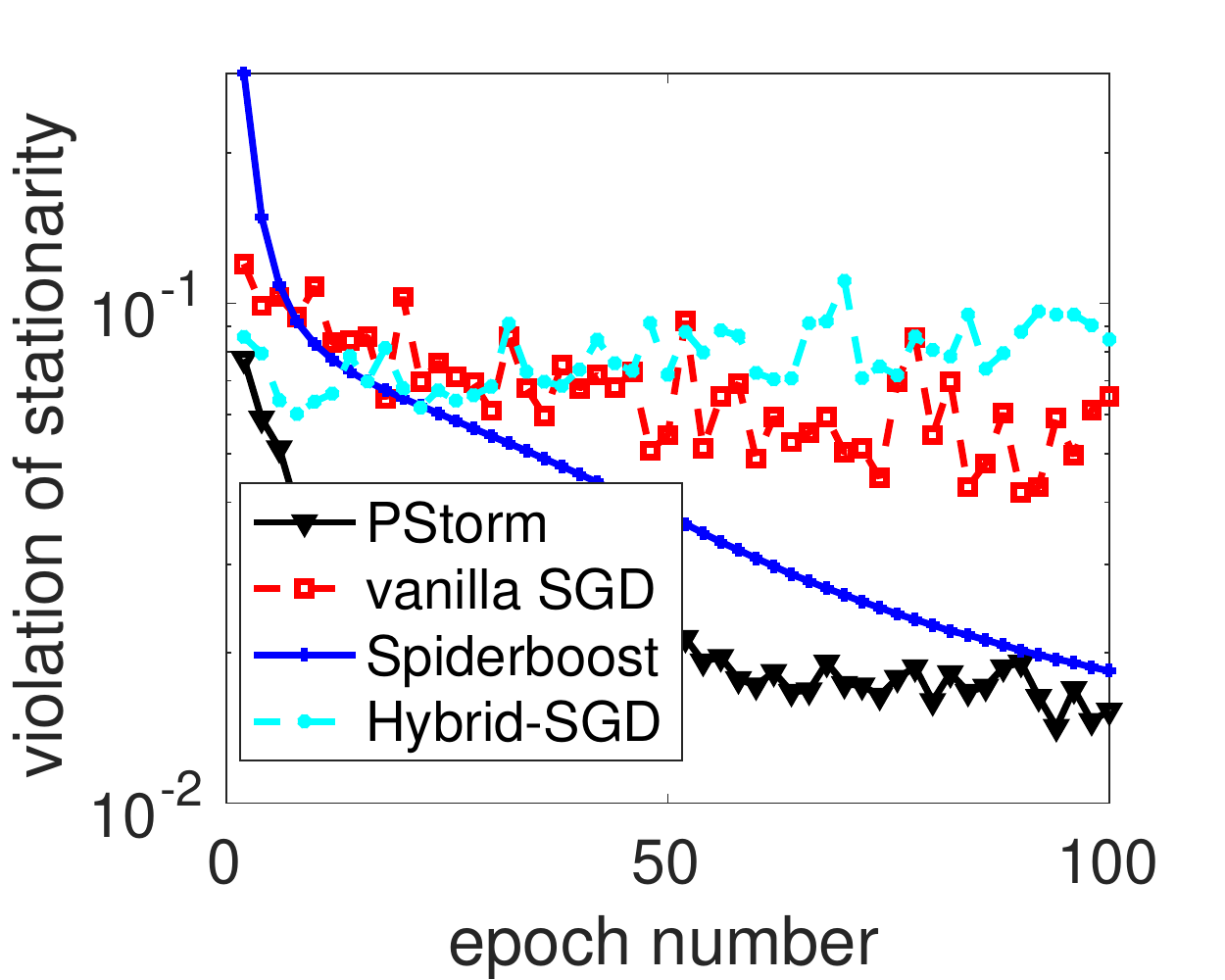} &
\includegraphics[width=0.25\textwidth]{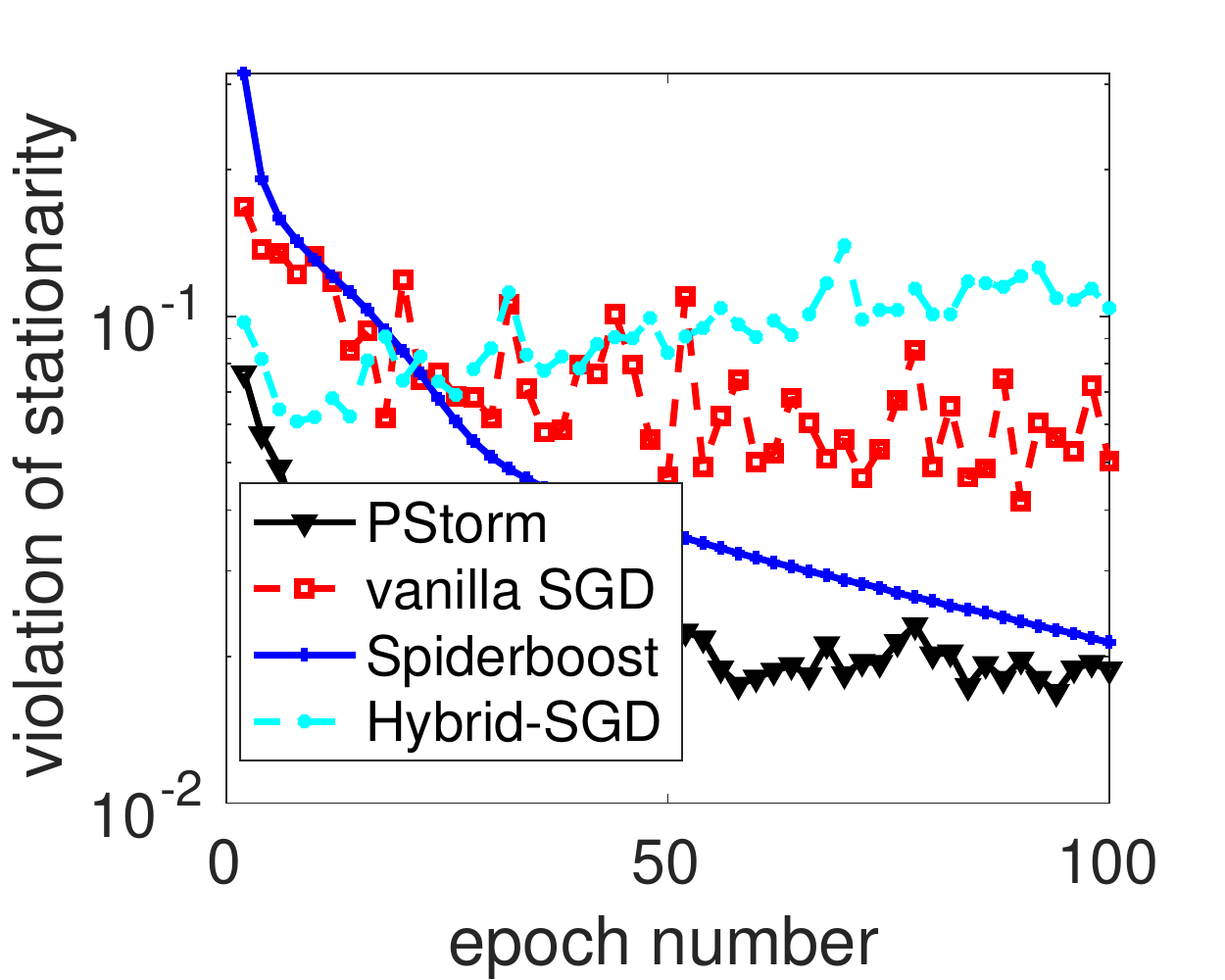} \\
\includegraphics[width=0.25\textwidth]{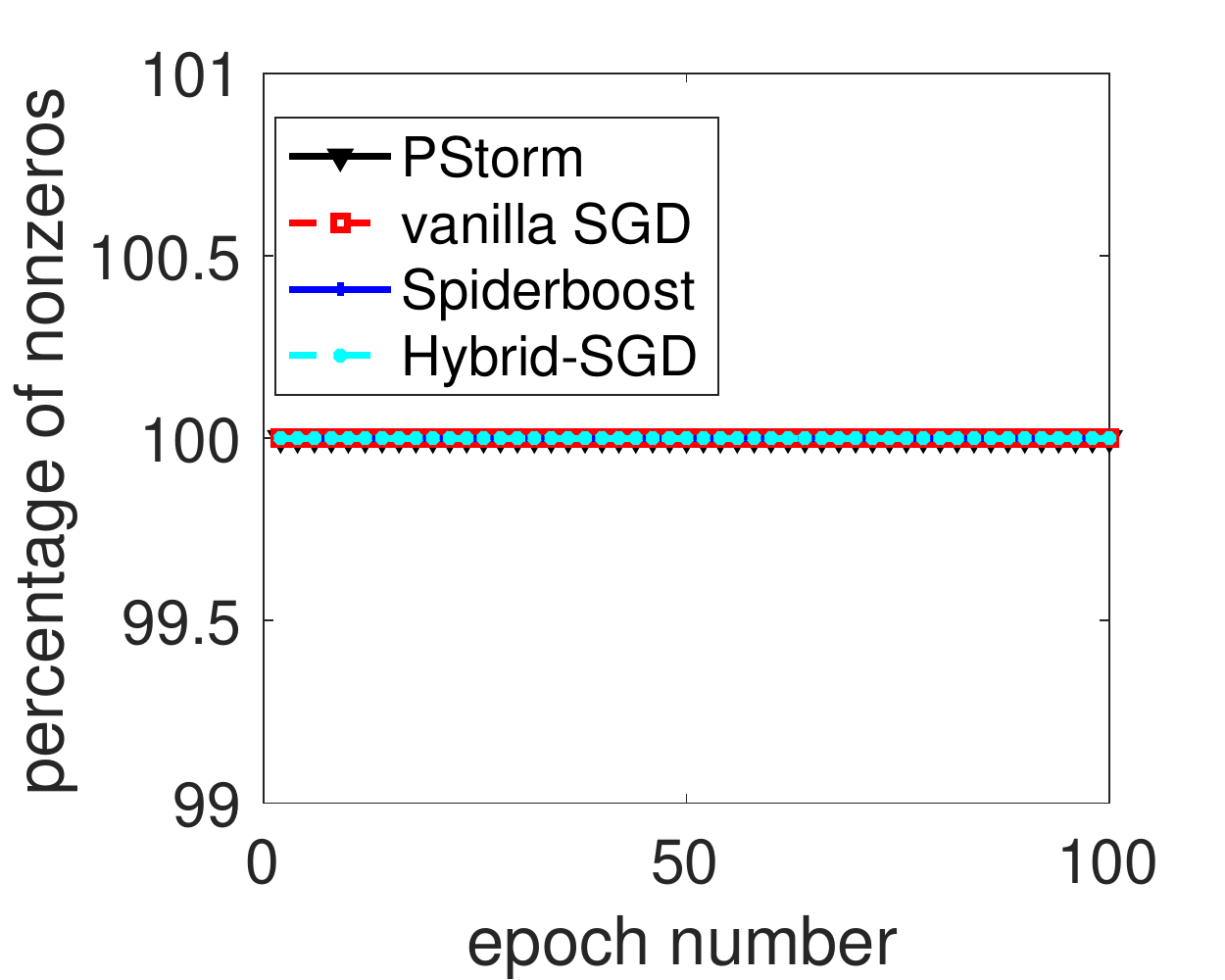}& 
\includegraphics[width=0.25\textwidth]{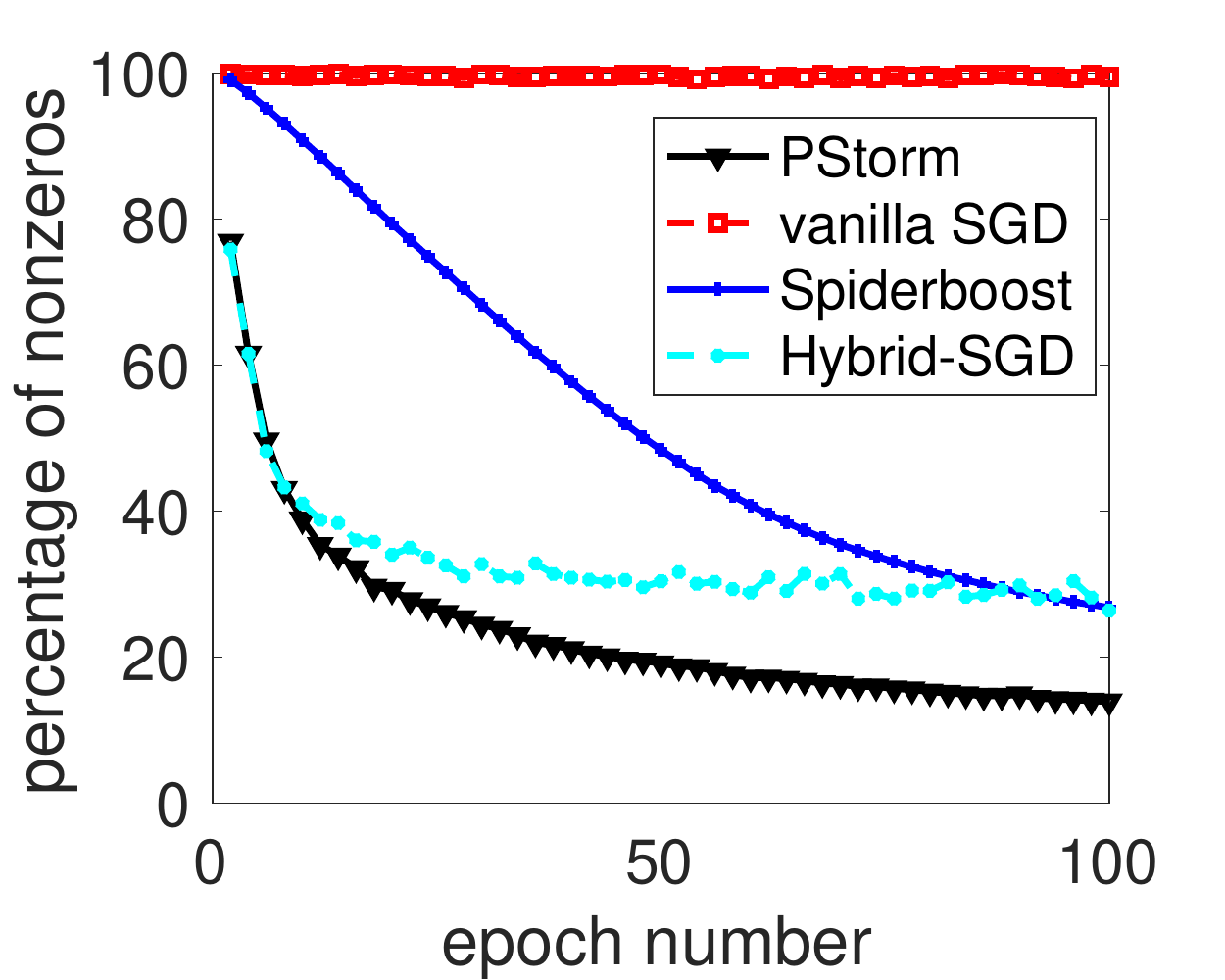} &
\includegraphics[width=0.25\textwidth]{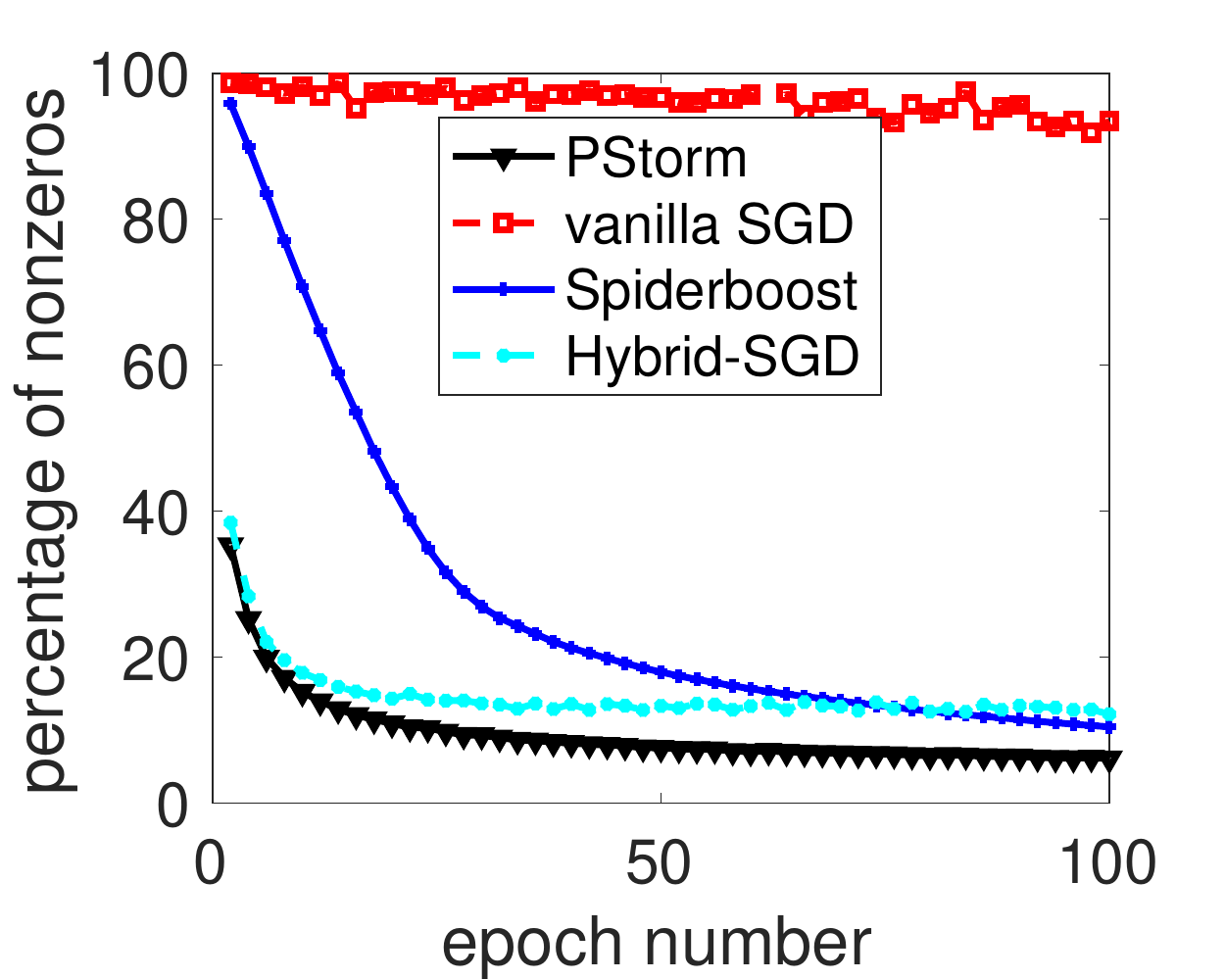} 
\end{tabular}
\end{center}
\caption{Results in terms of epoch by the proposed method PStorm, the vanilla SGD, Hybrid-SGD, and Spiderboost on training the model \eqref{eq:sparseDNN}. The first three methods use mini-batch $m = 32$.}\label{fig:sp-DNN-32}

\end{figure}

\subsection{Regularized Convolutional Neural Network}
In this subsection, we compare different methods on solving an $\ell_1$-regularized convolutional neural network, formulated as
\begin{equation}\label{eq:sparseAllCNN}
\min_{\vtheta} \frac{1}{N}\sum_{i=1}^N \ell\Big(\log \big(\mathrm{softmax}(\phi(\vx_i))\big), y_i\Big) + \lambda\|\vtheta\|_1.
\end{equation}
Similar to \eqref{eq:sparseDNN}, $\{(\vx_i,y_i)\}_{i=1}^N$ is a $c$-class training data set with $y_i\in \{1,\ldots,c\}$ for each $i$, $\vtheta$ contains all parameters of the neural network, $\ell$ denotes a loss function, the $\log$ function takes component-wise logarithm, $\phi$ represents the nonlinear transformation by the neural network, and $\lambda\ge0$ is a regularization parameter to trade off the loss and sparsity. In the test, we used the Cifar10 dataset \cite{krizhevsky2009learning} that has 50,000 training images and 10,000 testing images. In addition, we set $\ell$ to the cross entropy loss and $\phi$ to the all convolutional neural network (AllCNN) in \cite{springenberg2014striving} without data augmentation. The AllCNN has 9 convolutional layers.

We ran each method to 200 epochs. Mini-batch size was set to 100 for PStorm, the vanilla SGD, and Hybrid-SGD. The stepsizes of PStorm and the vanilla proximal SGD were tuned in the same way as in section~\ref{sec:npca}. For Spiderboost, we set $q=\lceil \sqrt{50000}\rceil =224 $ in \eqref{eq:vk-spider}, and its learning rate $\eta$ in \eqref{eq:spiderboost} was tuned by picking the best one from $\{0.01, 0.1, 0.5\}$. For Hybrid-SGD, we set its parameters in a way similar to that in section~\ref{sec:fnn} but chose the best pair of $(L, m_0)$ from $\{1, 10, 100\}\times \{10^2, 10^3, 10^4\}$. Results produced by the four methods are shown in Table~\ref{fig:sp-CNN} and Figure~\ref{fig:allcnn}. Again, each result in the table is the average of those at the last five epochs. From the results, we see that PStorm and Hybrid-SGD give similar training loss and testing accuracies. PStorm is slightly better than Hybrid-SGD, and the advantage of the former is more significant when $\lambda = 5\times 10^{-4}$. Spiderboost can give small violation of stationarity, but it tended to have significantly higher loss and lower accuracies. This is possibly because Spiderboost used larger batch size.

\begin{table}[h]
\begin{center}
\resizebox{0.99\textwidth}{!}{
\begin{tabular}{|c||cccc||cccc||cccc||cccc|}
\hline
Method & \multicolumn{4}{|c||}{PStorm} & \multicolumn{4}{|c||}{vanilla SGD} & \multicolumn{4}{|c||}{Spiderboost} & \multicolumn{4}{|c|}{Hybrid-SGD}\\\hline\hline
$\lambda$ & train & test & grad & density &  train & test & grad & density & train &  test & grad & density & train & test & grad & density \\\hline 
0.0 & \textbf{2.30e-2} & \textbf{89.74} & \textbf{0.10} & 100 
& 2.45e-1 & 85.61 & 0.76 & 100 
& 1.86 & 36.63 & 0.12 & 100 
& 5.26e-2 & 88.17 & 0.19 & 100 \\
2e-4 & \textbf{7.61e-1} & \textbf{89.40} & 4.17 & \textbf{44.91} 
& 9.42e-1 & 88.76 & 2.78 & 89.64 
& 2.93 & 20.43 & \textbf{0.89} & 53.79 
& 8.15e-1 & 88.03 & 1.68 & 72.78 \\
5e-4 & \textbf{1.15} & \textbf{88.53} & 5.94 & \textbf{19.87} 
& 2.15 & 86.62 & 5.55 & 40.64 
& 4.69 & 18.62 & \textbf{0.81} & 32.21 
& 1.75 & 86.71 & 6.09 & 60.78\\\hline
\end{tabular}
}
\end{center}
\caption{Results in terms of epoch by the proposed method PStorm, the vanilla SGD, Hybrid-SGD, and Spiderboost on training the model \eqref{eq:sparseAllCNN}. The first three methods use mini-batch $m = 100$. Each method runs to 200 epochs. ``train'' is for training loss; ``test'' is for testing accuracy;  ``grad'' is for the violation of stationarity; ``density'' is for the percentage of nonzeros in the solution. The best results for ``test'', ``grad'', and ``density'' are highlighted in \textbf{bold}.}\label{fig:sp-CNN}
\end{table}

\begin{figure}[h]
\begin{center}
\begin{tabular}{ccc}
$\lambda = 0$ & $\lambda = 2\times 10^{-4}$ & $\lambda = 5\times 10^{-4}$ \\
\includegraphics[width=0.24\textwidth]{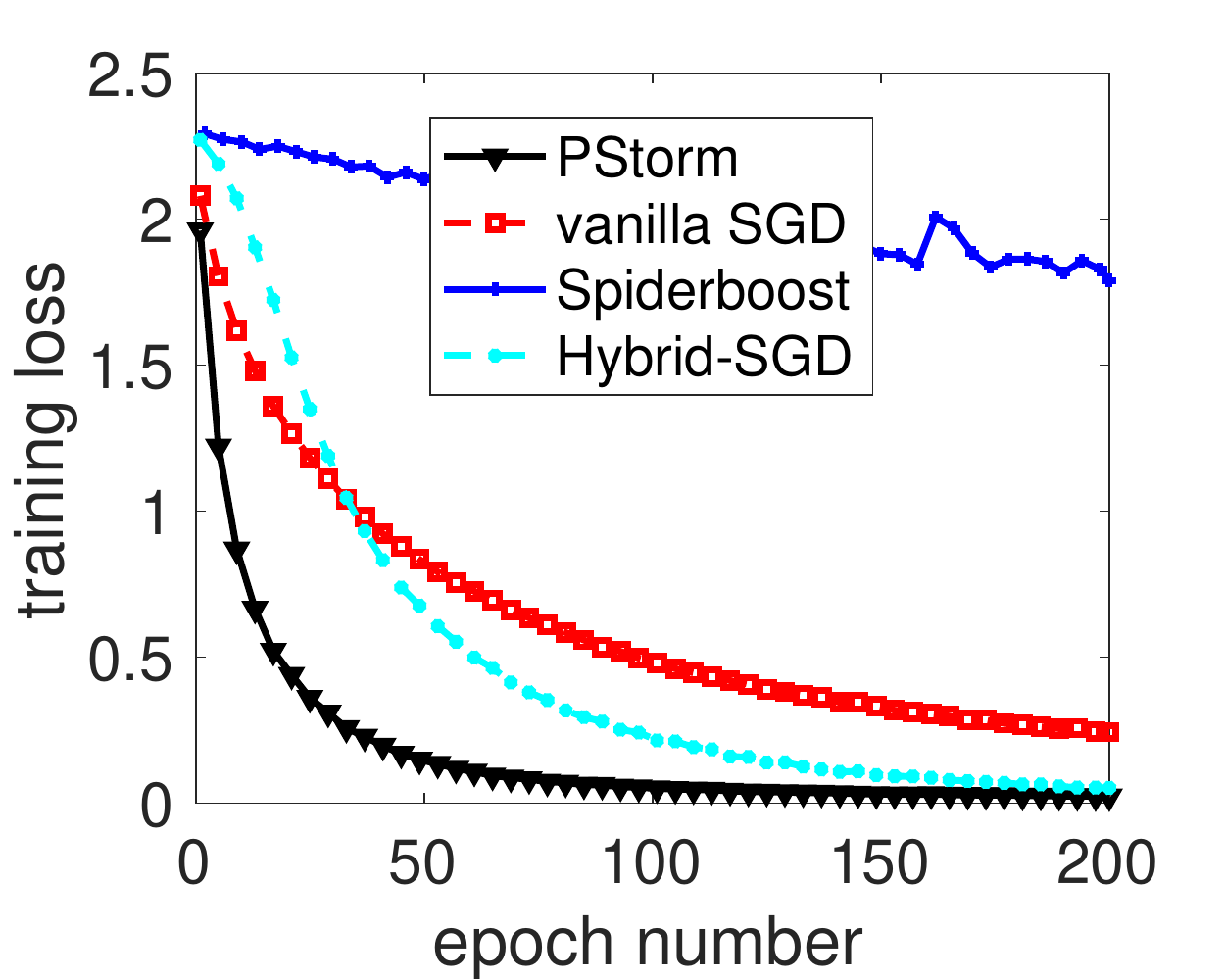} &
\includegraphics[width=0.24\textwidth]{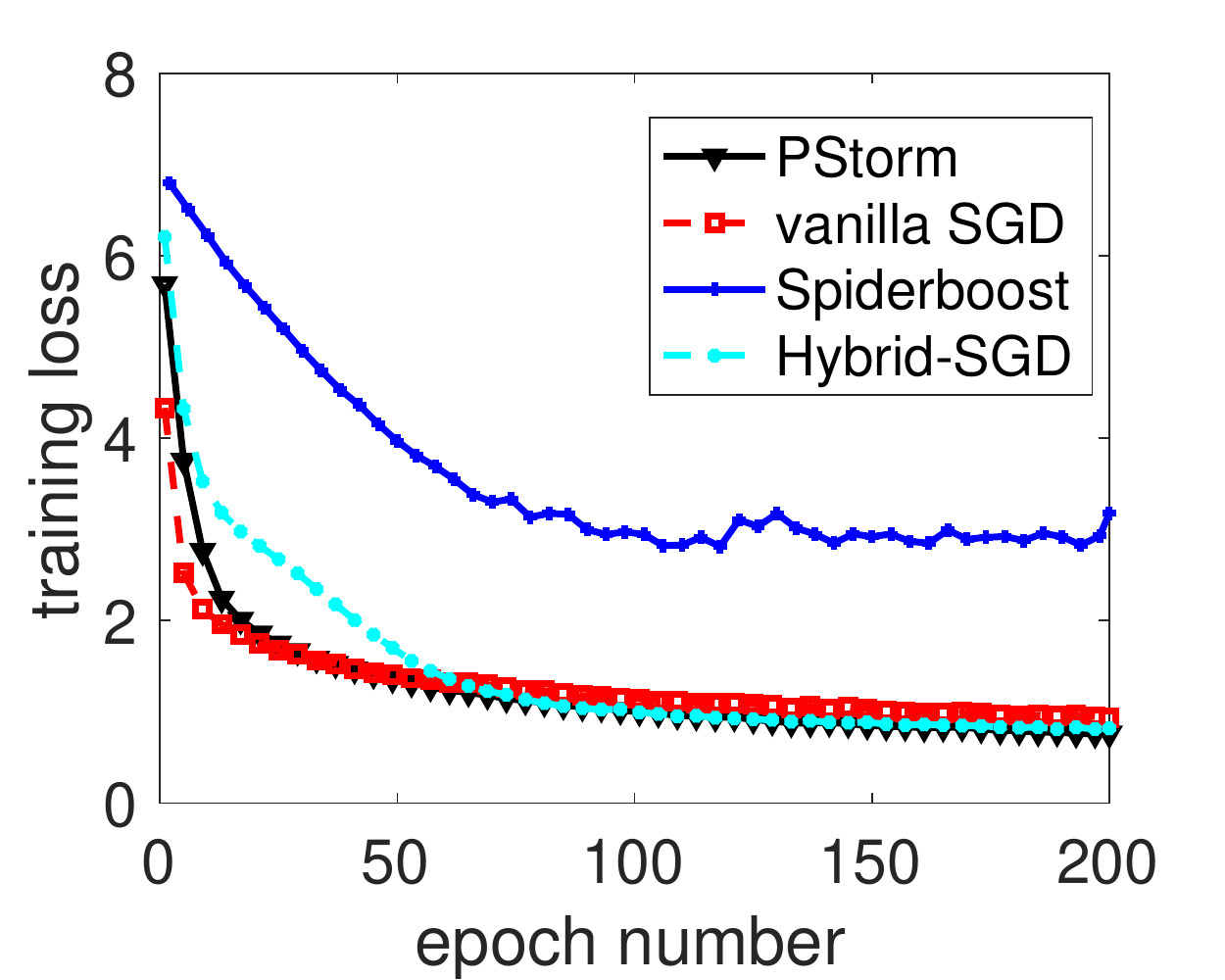}  &
\includegraphics[width=0.24\textwidth]{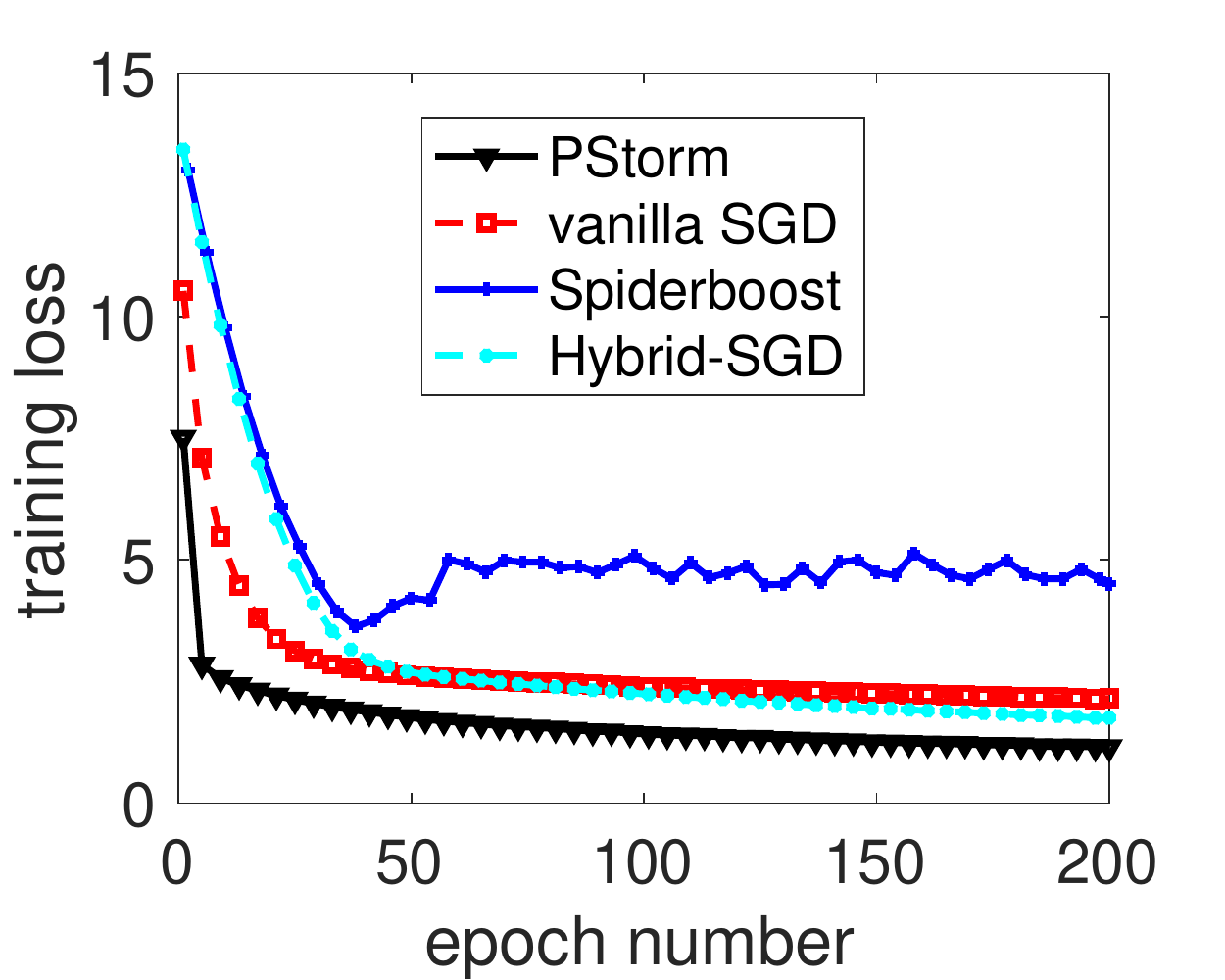}\\ 
\includegraphics[width=0.24\textwidth]{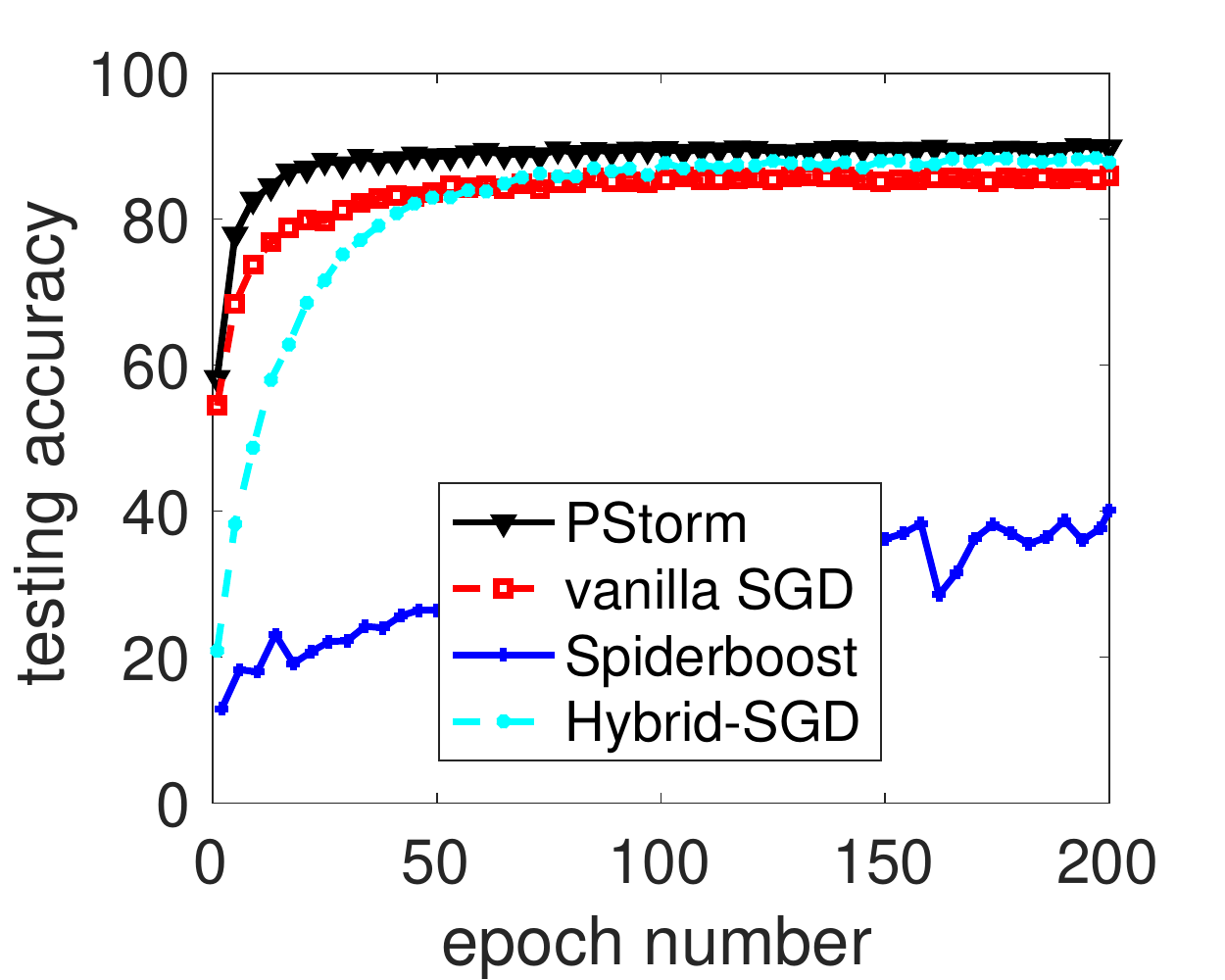} &
\includegraphics[width=0.24\textwidth]{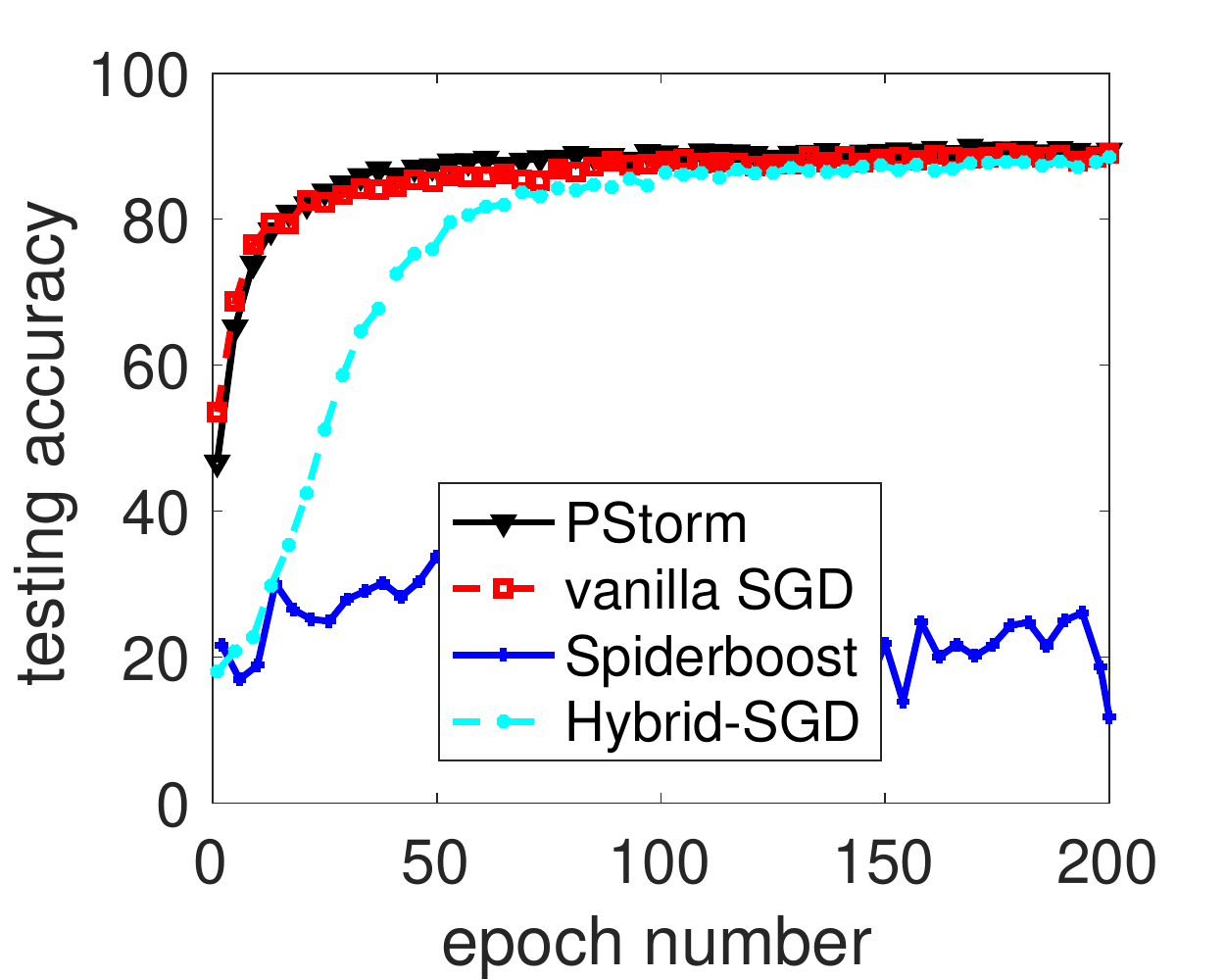} &
\includegraphics[width=0.24\textwidth]{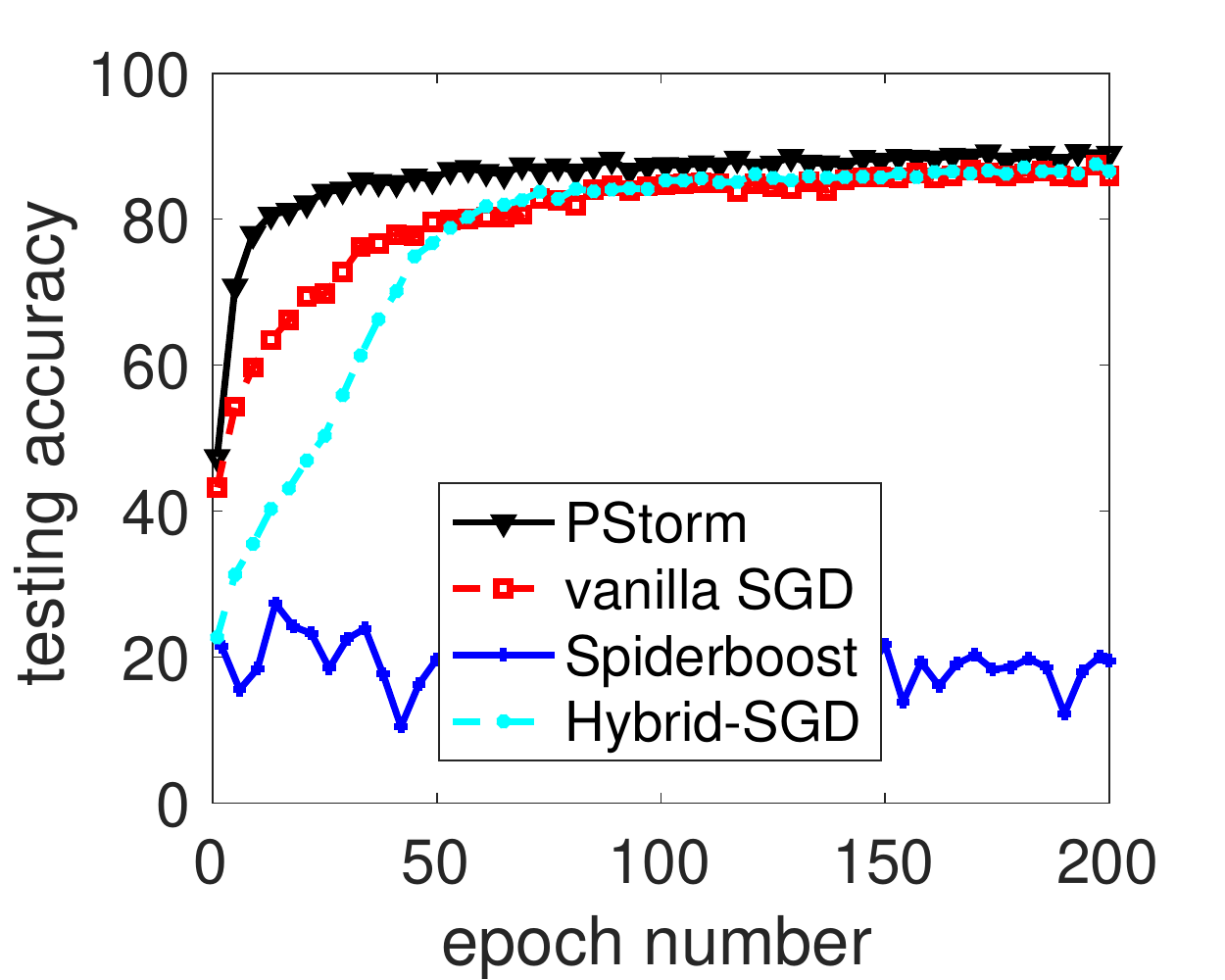} \\
\includegraphics[width=0.24\textwidth]{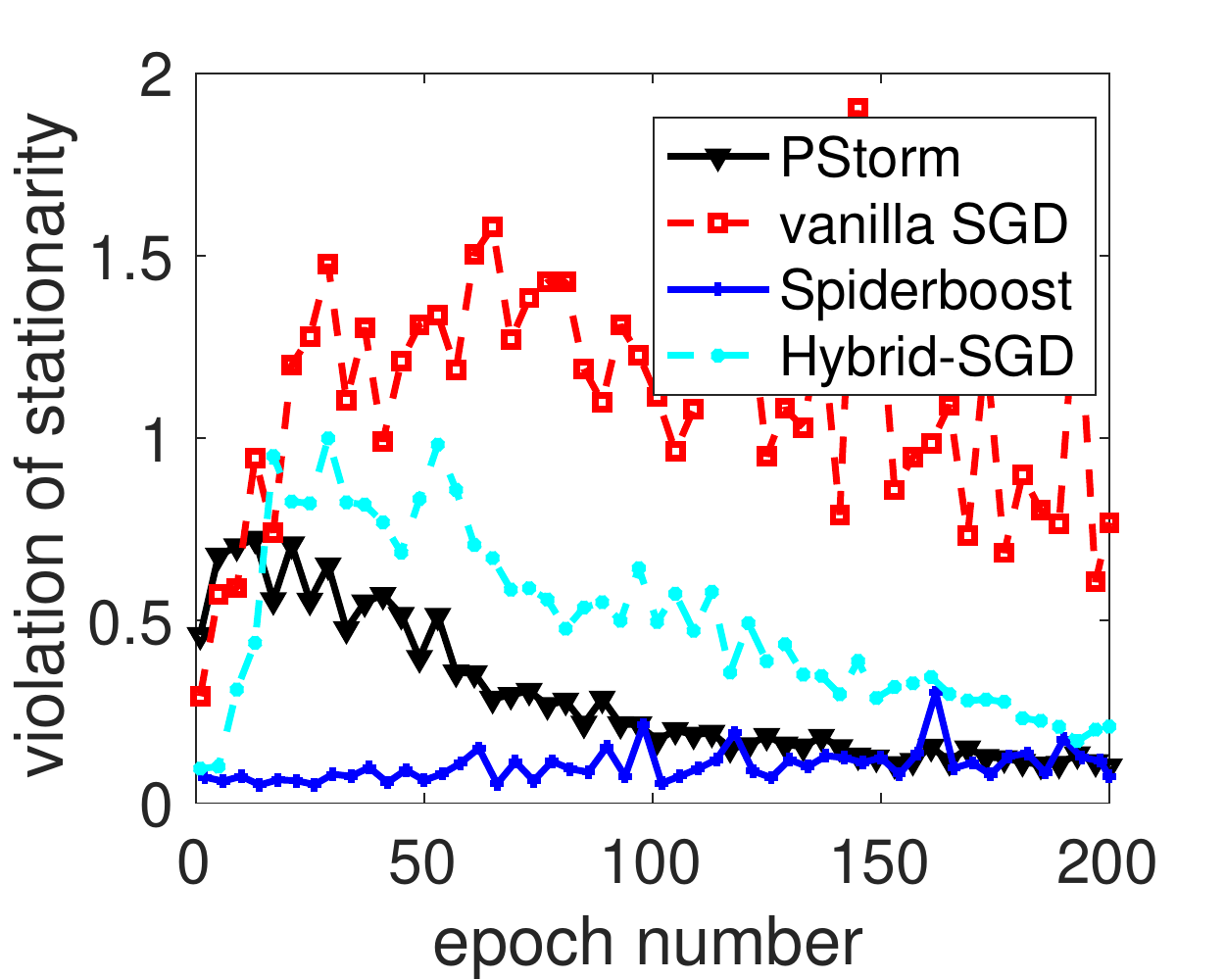} &
\includegraphics[width=0.24\textwidth]{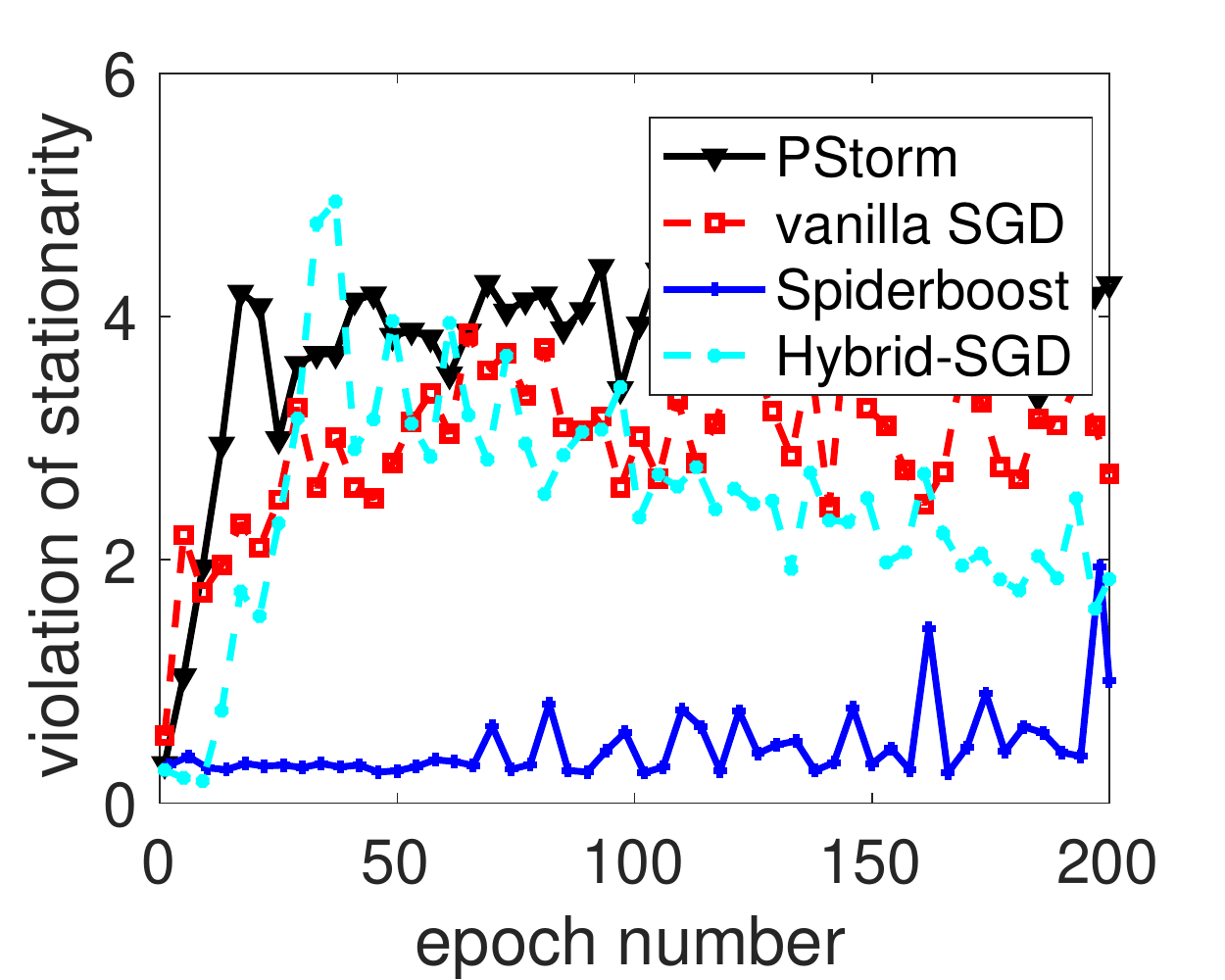} &
\includegraphics[width=0.24\textwidth]{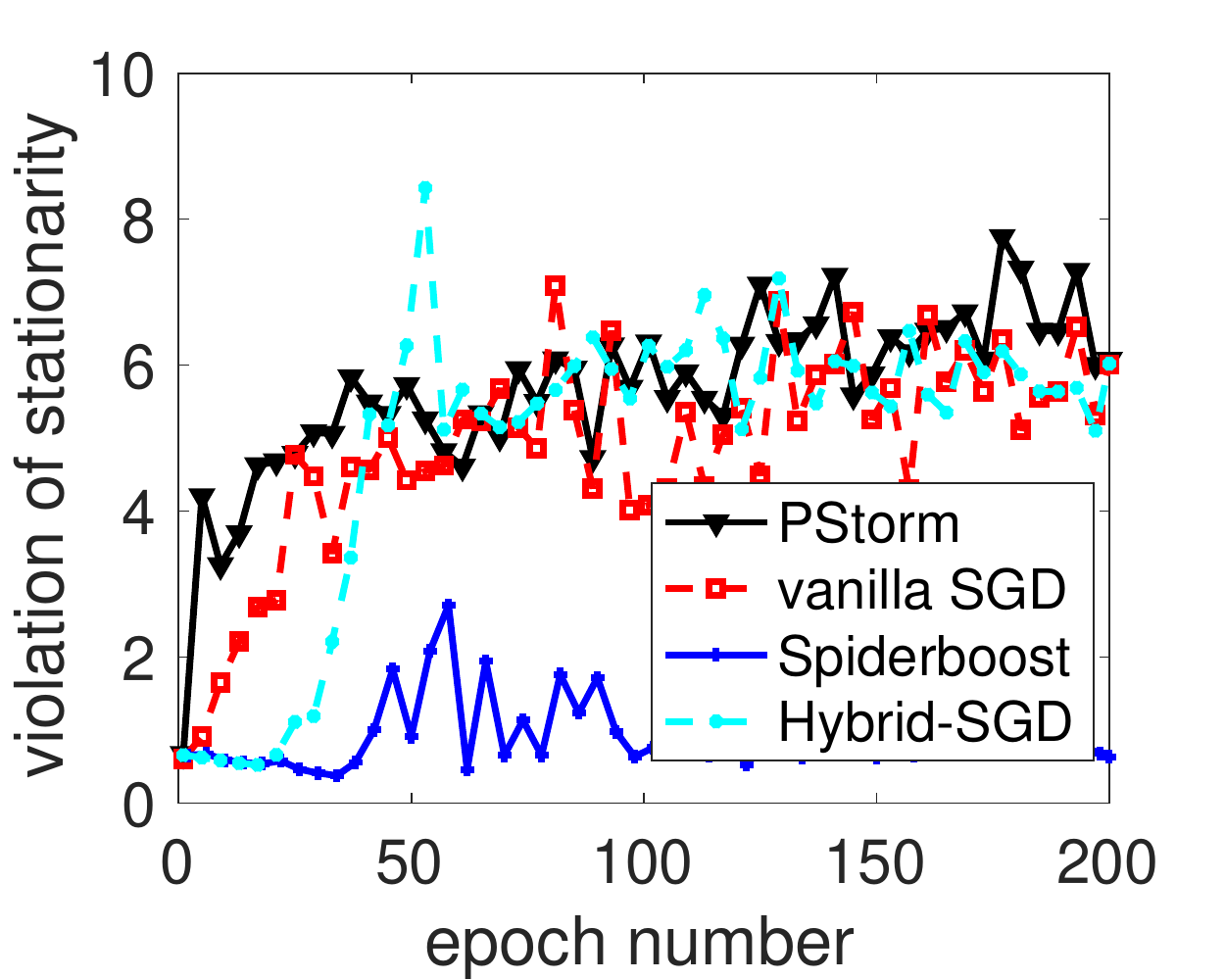} \\
\includegraphics[width=0.24\textwidth]{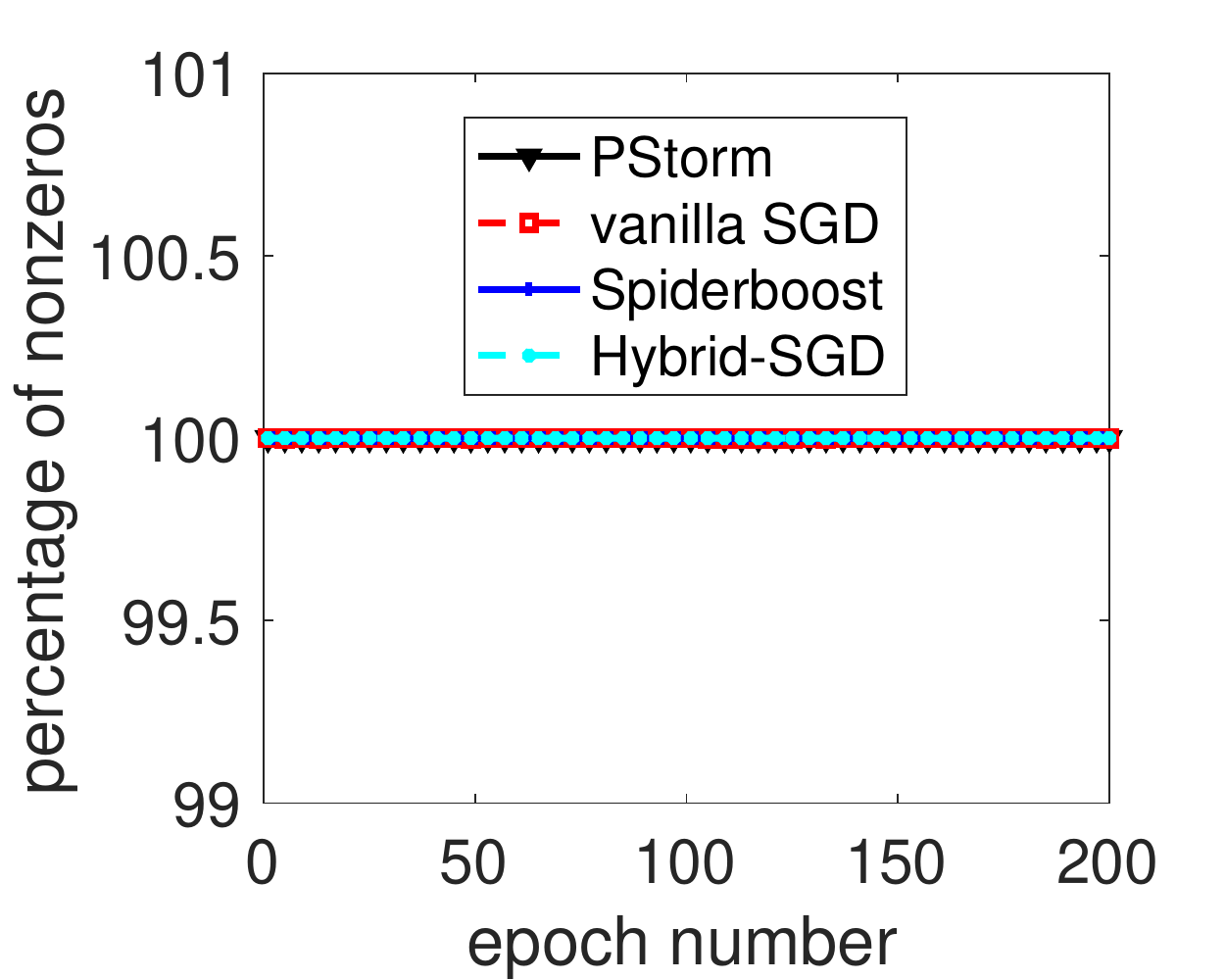}&
\includegraphics[width=0.24\textwidth]{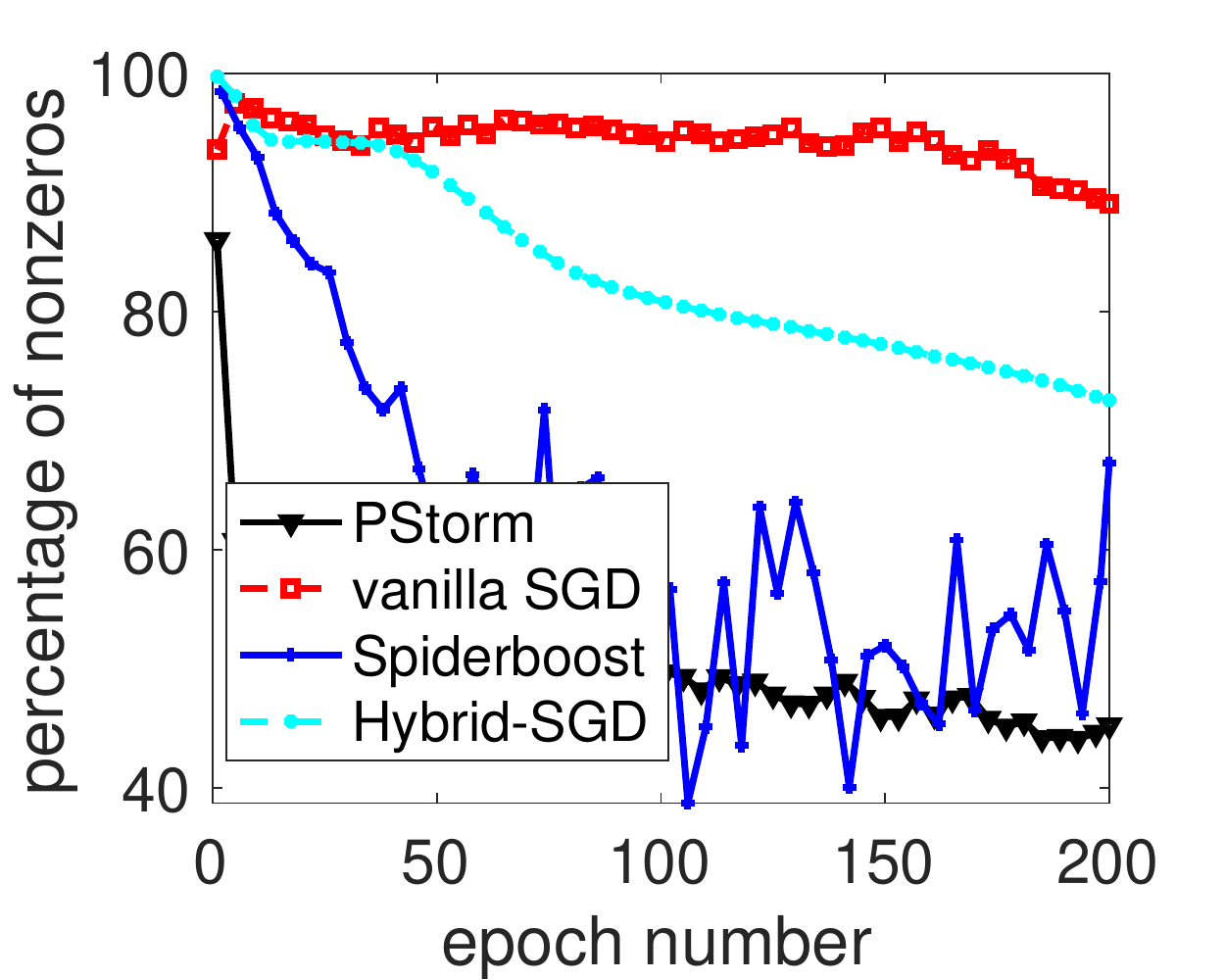} &
\includegraphics[width=0.24\textwidth]{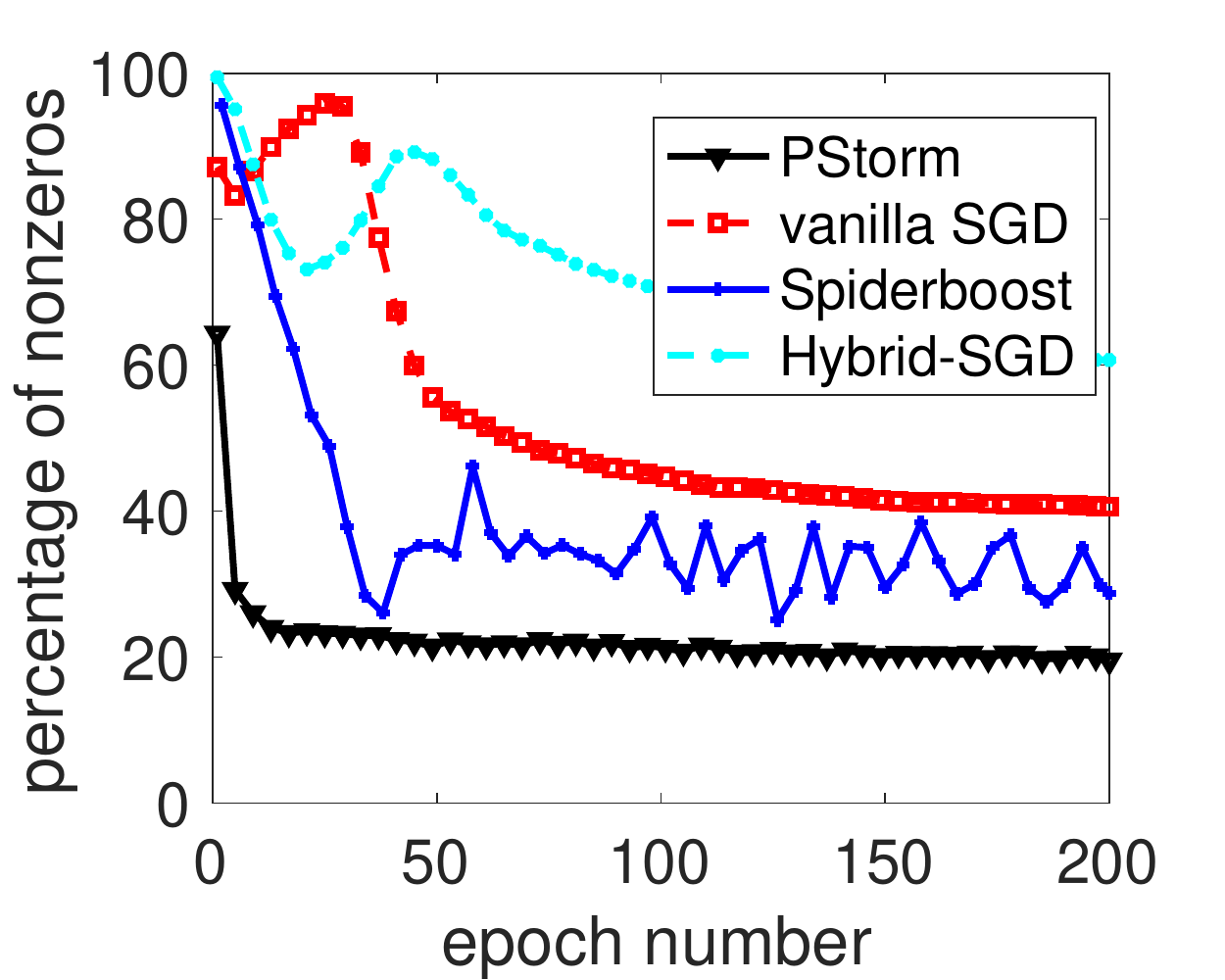}
\end{tabular}
\end{center}
\caption{Results in terms of epoch by the proposed method PStorm, the vanilla SGD, Hybrid-SGD, and Spiderboost on training the model \eqref{eq:sparseAllCNN}. The first three methods use mini-batch $m = 100$.}\label{fig:allcnn}
\end{figure}

\section{Conclusions}\label{sec:conclusion}
We have presented a momentum-based variance-reduced mirror-prox stochastic gradient method for solving nonconvex nonsmooth problems, where the nonsmooth term is assumed to be closed convex. The method, named PStorm, requires only one data sample for each update. It is the first $O(1)$-sample-based method that achieves the optimal complexity result $O(\vareps^{-3})$ under a mean-squared smoothness condition for solving nonconvex nonsmooth problems. The $O(1)$-sample update is important in machine learning because small-batch training can lead to good generalization. On training sparse regularized neural networks, PStorm can perform better than two other optimal stochastic methods and consistently better than the vanilla stochastic gradient method.

\bibliographystyle{abbrv}
\bibliography{optim,myPub}

\end{document}